\begin{document}
\newcommand\Mand{\ \text{and}\ }
\newcommand\Mor{\ \text{or}\ }
\newcommand\Mfor{\ \text{for}\ }
\newcommand\Real{\mathbb{R}}
\newcommand\RR{\mathbb{R}}
\newcommand\im{\operatorname{Im}}
\newcommand\re{\operatorname{Re}}
\newcommand\sign{\operatorname{sign}}
\newcommand\sphere{\mathbb{S}}
\newcommand\BB{\mathbb{B}}
\newcommand\HH{\mathbb{H}}
\newcommand\dS{\mathrm{dS}}
\newcommand\ZZ{\mathbb{Z}}
\newcommand\NN{\mathbb{N}}
\newcommand\codim{\operatorname{codim}}
\newcommand\Sym{\operatorname{Sym}}
\newcommand\End{\operatorname{End}}
\newcommand\Span{\operatorname{span}}
\newcommand\Ran{\operatorname{Ran}}
\newcommand\ep{\epsilon}
\newcommand\Cinf{\cC^\infty}
\newcommand\dCinf{\dot \cC^\infty}
\newcommand\CI{\cC^\infty}
\newcommand\dCI{\dot \cC^\infty}
\newcommand\Cx{\mathbb{C}}
\newcommand\Nat{\mathbb{N}}
\newcommand\dist{\cC^{-\infty}}
\newcommand\ddist{\dot \cC^{-\infty}}
\newcommand\pa{\partial}
\newcommand\Card{\mathrm{Card}}
\renewcommand\Box{{\square}}
\newcommand\Ell{\mathrm{Ell}}
\newcommand\Char{\mathrm{Char}}
\newcommand\WF{\mathrm{WF}}
\newcommand\WFh{\mathrm{WF}_\semi}
\newcommand\WFb{\mathrm{WF}_\bl}
\newcommand\WFsc{\mathrm{WF}_\scl}
\newcommand\WFscb{\mathrm{WF}_{\scl,\bl}}
\newcommand\Vf{\mathcal{V}}
\newcommand\Vb{\mathcal{V}_\bl}
\newcommand\Vsc{\mathcal{V}_\scl}
\newcommand\Vscsus{\mathcal{V}_\scsus}
\newcommand\Vz{\mathcal{V}_0}
\newcommand\Hb{H_{\bl}}
\newcommand\Hbh{H_{\bl,\semi}}
\newcommand\bHb{\bar H_{\bl}}
\newcommand\dHb{\dot H_{\bl}}
\newcommand\Hbb{\tilde H_{\bl}}
\newcommand\Hsc{H_{\scl}}
\newcommand\Hsch{H_{\scl,\semi}}
\newcommand\Hscb{H_{\scbl}}
\newcommand\Hscbh{H_{\scbl,\semi}}
\newcommand\bHscb{\bar H_{\scbl}}
\newcommand\dHscb{\dot H_{\scbl}}
\newcommand\Hscsus{H_{\scsus}}
\newcommand\Ker{\mathrm{Ker}}
\newcommand\Range{\mathrm{Ran}}
\newcommand\Hom{\mathrm{Hom}}
\newcommand\Id{\mathrm{Id}}
\newcommand\sgn{\operatorname{sgn}}
\newcommand\ff{\mathrm{ff}}
\newcommand\tf{\mathrm{tf}}
\newcommand\esssupp{\operatorname{esssupp}}
\newcommand\supp{\operatorname{supp}}
\newcommand\vol{\mathrm{vol}}
\newcommand\Diff{\mathrm{Diff}}
\newcommand\Diffd{\mathrm{Diff}_{\dagger}}
\newcommand\Diffs{\mathrm{Diff}_{\sharp}}
\newcommand\Diffb{\mathrm{Diff}_\bl}
\newcommand\Diffbh{\mathrm{Diff}_{\bl,\semi}}
\newcommand\Diffsc{\mathrm{Diff}_\scl}
\newcommand\Diffsch{\mathrm{Diff}_{\scl,\semi}}
\newcommand\Diffscsus{\mathrm{Diff}_\scsus}
\newcommand\DiffbI{\mathrm{Diff}_{\bl,I}}
\newcommand\Diffbeven{\mathrm{Diff}_{\bl,\even}}
\newcommand\Diffz{\mathrm{Diff}_0}
\newcommand\Psih{\Psi_{\semi}}
\newcommand\Psihcl{\Psi_{\semi,\cl}}
\newcommand\Psisc{\Psi_\scl}
\newcommand\Psisch{\Psi_{\scl,\semi}}
\newcommand\Psiscc{\Psi_\sccl}
\newcommand\Psiscch{\Psi_{\sccl,\semi}}
\newcommand\Psiscb{\Psi_\scbl}
\newcommand\Psiscbh{\Psi_{\scbl,\semi}}
\newcommand\Psib{\Psi_\bl}
\newcommand\Psibh{\Psi_{\bl,\semi}}
\newcommand\Psibc{\Psi_{\mathrm{bc}}}
\newcommand\Psibch{\Psi_{\mathrm{bc},\semi}}
\newcommand\Psibcdelta{\Psi_{\mathrm{bc},\delta}}
\newcommand\TbC{{}^{\bl,\Cx} T}
\newcommand\Tb{{}^{\bl} T}
\newcommand\Sb{{}^{\bl} S}
\newcommand\Tsc{{}^{\scl} T}
\newcommand\Tscsus{{}^{\scsus} T}
\newcommand\Ssc{{}^{\scl} S}
\newcommand\Sscsus{{}^{\scsus} S}
\newcommand\Lambdab{{}^{\bl} \Lambda}
\newcommand\zT{{}^{0} T}
\newcommand\Tz{{}^{0} T}
\newcommand\zS{{}^{0} S}
\newcommand\dom{\mathcal{D}}
\newcommand\cA{\mathcal{A}}
\newcommand\cB{\mathcal{B}}
\newcommand\cE{\mathcal{E}}
\newcommand\cG{\mathcal{G}}
\newcommand\cH{\mathcal{H}}
\newcommand\cU{\mathcal{U}}
\newcommand\cO{\mathcal{O}}
\newcommand\cF{\mathcal{F}}
\newcommand\cM{\mathcal{M}}
\newcommand\cQ{\mathcal{Q}}
\newcommand\cR{\mathcal{R}}
\newcommand\cI{\mathcal{I}}
\newcommand\cL{\mathcal{L}}
\newcommand\cK{\mathcal{K}}
\newcommand\cC{\mathcal{C}}
\newcommand\cX{\mathcal{X}}
\newcommand\cY{\mathcal{Y}}
\newcommand\cXsus{\mathcal{X}_\sus}
\newcommand\cYsus{\mathcal{Y}_\sus}
\newcommand\cP{\mathcal{P}}
\newcommand\cS{\mathcal{S}}
\newcommand\cZ{\mathcal{Z}}
\newcommand\cW{\mathcal{W}}
\newcommand\Ptil{\tilde P}
\newcommand\ptil{\tilde p}
\newcommand\chit{\tilde \chi}
\newcommand\yt{\tilde y}
\newcommand\zetat{\tilde \zeta}
\newcommand\xit{\tilde \xi}
\newcommand\taut{{\tilde \tau}}
\newcommand\phit{{\tilde \phi}}
\newcommand\mut{{\tilde \mu}}
\newcommand\taubsemi{\tau_{\bl,\hbar}}
\newcommand\lambdasemi{\lambda_{\hbar}}
\newcommand\sigmat{{\tilde \sigma}}
\newcommand\sigmah{\hat\sigma}
\newcommand\zetah{\hat\zeta}
\newcommand\etah{\hat\eta}
\newcommand\taub{\tau_{\bl}}
\newcommand\mub{\mu_{\bl}}

\newcommand\tauh{\tau_\semi}
\newcommand\muh{\mu_\semi}
\newcommand\taubh{\tau_{\bl,\semi}}
\newcommand\mubh{\mu_{\bl,\semi}}

\newcommand\nuh{\hat\nu}
\newcommand\loc{\mathrm{loc}}
\newcommand\compl{\mathrm{comp}}
\newcommand\reg{\mathrm{reg}}
\newcommand\GBB{\textsf{GBB}}
\newcommand\GBBsp{\textsf{GBB}\ }
\newcommand\bl{{\mathrm b}}
\newcommand\scl{{\mathrm{sc}}}
\newcommand\scbl{{\mathrm{sc,b}}}
\newcommand\sccl{{\mathrm{scc}}}
\newcommand\scsus{{\mathrm{sc-sus}}}
\newcommand\sus{{\mathrm{sus}}}
\newcommand{\sH}{\mathsf{H}}
\newcommand{\cte}{\digamma}
\newcommand\cl{\mathrm{cl}}
\newcommand\hsf{\mathcal{S}}
\newcommand\Div{\operatorname{div}}
\newcommand\hilbert{\mathfrak{X}}
\newcommand\smooth{\mathcal{J}}
\newcommand\decay{\ell}
\newcommand\symb{j}

\newcommand\Hh{H_{\semi}}

\newcommand\bM{\bar M}
\newcommand\Xext{X_{-\delta_0}}

\newcommand\xib{{\underline{\xi}}}
\newcommand\etab{{\underline{\eta}}}
\newcommand\zetab{{\underline{\zeta}}}

\newcommand\xibh{{\underline{\hat \xi}}}
\newcommand\etabh{{\underline{\hat \eta}}}
\newcommand\zetabh{{\underline{\hat \zeta}}}

\newcommand\zn{z}
\newcommand\sigman{\sigma}
\newcommand\psit{\tilde\psi}
\newcommand\rhot{{\tilde\rho}}

\newcommand\hM{\hat M}

\newcommand\Op{\operatorname{Op}}
\newcommand\Oph{\operatorname{Op_{\semi}}}

\newcommand\innr{{\mathrm{inner}}}
\newcommand\outr{{\mathrm{outer}}}
\newcommand\full{{\mathrm{full}}}
\newcommand\semi{\hbar}

\newcommand\Feynman{\mathrm{Feynman}}
\newcommand\future{\mathrm{future}}
\newcommand\past{\mathrm{past}}

\newcommand\elliptic{\mathrm{ell}}

\newcommand\even{\mathrm{even}}

\newcommand\sob{s}

\newtheorem{lemma}{Lemma}[section]
\newtheorem{prop}[lemma]{Proposition}
\newtheorem{thm}[lemma]{Theorem}
\newtheorem{cor}[lemma]{Corollary}
\newtheorem{result}[lemma]{Result}
\newtheorem*{thm*}{Theorem}
\newtheorem*{prop*}{Proposition}
\newtheorem*{cor*}{Corollary}
\newtheorem*{conj*}{Conjecture}
\numberwithin{equation}{section}
\theoremstyle{remark}
\newtheorem{rem}[lemma]{Remark}
\newtheorem*{rem*}{Remark}
\theoremstyle{definition}
\newtheorem{Def}[lemma]{Definition}
\newtheorem*{Def*}{Definition}

\newcommand{\mar}[1]{{\marginpar{\sffamily{\scriptsize #1}}}}
\newcommand\av[1]{\mar{AV:#1}}

\renewcommand{\theenumi}{\roman{enumi}}
\renewcommand{\labelenumi}{(\theenumi)}

\title[Limiting absorption principle, a Lagrangian approach]{Limiting
  absorption principle on Riemannian scattering (asymptotically conic)
  spaces, a Lagrangian approach}
\author[Andras Vasy]{Andr\'as Vasy}
\address{Department of Mathematics, Stanford University, CA 94305-2125, USA}

\email{andras@math.stanford.edu}

\subjclass[2000]{Primary 35P25; Secondary 58J50, 58J40, 35P25, 35L05, 58J47}

\thanks{The author gratefully
  acknowledges partial support from the NSF under grant numbers
  DMS-1361432 and DMS-1664683 and from a Simons Fellowship.}
\date{\today. Original version: May 29, 2019}

\begin{abstract}
We use a Lagrangian perspective to show the limiting absorption principle on Riemannian scattering, i.e.\
asymptotically conic, spaces, and their generalizations. More
precisely we show that, for non-zero spectral parameter, the `on
spectrum', as well as the `off-spectrum', spectral family is Fredholm
in function spaces which encode the Lagrangian regularity of generalizations of
`outgoing spherical waves' of scattering theory, and indeed this
persists in the `physical half plane'.
\end{abstract}

\maketitle

\section{Introduction and outline}
The purpose of this paper is to prove the limiting absorption
principle, concerning the limit of the resolvent at the spectrum on
appropriate function spaces, for Laplace-like operators on Riemannian scattering
(asymptotically conic at infinity) spaces $(X,g)$ using a description that
focuses on the outgoing radial set, which in phase space corresponds
to the well-known outgoing spherical waves in Euclidean scattering
theory. Thus, the result is a precise description of the
limiting resolvent in terms of mapping properties on spaces of (finite regularity)
Lagrangian distributions, where now the Lagrangian is conic in the
base manifold, rather than the fibers of the cotangent bundle as
familiar from standard microlocal analysis. Such a
result is well suited for the analysis of waves, especially at the
`radiation face', or `scri', see \cite{Haefner-Hintz-Vasy:Linear}, though we do not pursue this aspect
here. We explain more of the historic context of Lagrangian analysis
in scattering theory below, but already remark that recently such a
Lagrangian analysis proved very effective in the description of
internal waves in fluids by Dyatlov and Zworski \cite{Dyatlov-Zworski:Forced}.

The basic setting is Melrose's scattering pseudodifferential algebra
$\Psisc^{*,*}(X)$, see \cite{RBMSpec}, which for $X$ the radial
compactification of $\RR^n$ (to a ball) goes back to Parenti and Shubin
\cite{Parenti:Operatori,Shubin:Pseudo}, and which corresponds to any standard
quantization of symbols with the property
$$
|D_z^\alpha D_\zeta^\beta a(z,\zeta)|\leq
C_{\alpha\beta}\langle z\rangle^{r-|\alpha|}\langle \zeta\rangle^{s-|\beta|},
$$
with $r$ the decay order and $s$ the differential order. The key
property of this algebra is that the principal symbol is taken modulo
$\langle z\rangle^{-1}\langle\zeta\rangle^{-1}$ better terms, thus
also captures decay at infinity; see Section~\ref{sec:pseudo} for
more detail.

With this in mind, recall first that for $\sigma\neq
0$ real, elements of the spectral family $\Delta_g-\sigma^2$ are not
elliptic in this algebra due to the part of the principal symbol
capturing decay (essentially as $|\zeta|^2-\sigma^2$ can vanish), rather have a non-degenerate real principal symbol with a
source-to-sink Hamilton flow within their characteristic set (the zero
set of the principal symbol). One obtains a Fredholm problem
using variable decay order weighted scattering Sobolev spaces (which
are the standard Sobolev space on $\RR^n$, albeit of a microlocally
variable order), where
the order only matters on the characteristic set, needs to be monotone
along the Hamilton flow, and be greater than a threshold value ($-1/2$ in the
standard case, for the domain of the operator; the target space has
one additional order of decay) for one of the
radial sets (source or sink), which we call the incoming one, and less
than a threshold value for the other one, which we call the outgoing
one, see \cite{Vasy:Minicourse}, and see also \cite{Faure-Sjostrand:Upper}
for a semiclassical version in a dynamical systems setting. Moreover, this gives the limiting
absorption resolvent where the $+i0$ vs.\ $-i0$ limits (in terms of the
spectral parameter $\sigma^2$, thus $\im\sigma^2=2i\re\sigma\im\sigma$
shows that $\im\sigma\geq 0$ corresponds to the $+i0$ limit if
$\re\sigma>0$, and the $-i0$ limit if $\re\sigma<0$) correspond to
propagating estimates forward along the Hamilton flow, i.e.\ having high decay order at the
source, vs.\ propagating estimates backwards, i.e.\ having high decay
order at the sink. This can then be extended uniformly
to zero energy, see \cite{Vasy:Zero-energy}, using second microlocal
methods discussed below.

A different way of arranging a Fredholm setup is by considering a fixed decay
order Sobolev space which is lower than the threshold order, but
adding to it extra Lagrangian regularity relative to elements of $\Psisc^{1,1}$
characteristic on the outgoing radial set (referred to as `module
regularity', see \cite{Hassell-Melrose-Vasy:Spectral,
  Hassell-Melrose-Vasy:Microlocal, Haber-Vasy:Radial}, see also \cite{Dyatlov-Zworski:Forced}). (Since the
Lagrangian is at finite points in the fibers of the scattering
cotangent bundle, i.e.\ where $\zeta$ is finite in the Euclidean picture, the differential order is immaterial; only the decay
order matters.) For instance, the
background decay order can be taken $1/2$ less than the threshold, and
one may require regularity under one such pseudodifferential
factor. This makes the space to have $1$ order more decay everywhere
except at the outgoing radial set; since the thresholds are the same
in this case at both radial sets, this means that we have $1/2$ higher
order decay at the incoming radial set than the threshold. Very
concretely, this can be arranged using operators
$$
x^{-1}(x^2D_x+\sigma),\ x^{-1}(xD_{y_j})=D_{y_j},
$$
with $x$ the boundary defining
function, $y_j$ local coordinates on the boundary, and the metric is
to leading order warped product type relative to these. (So in the asymptotically
Euclidean setting, one could have $x=r^{-1}$, and $y$ local
coordinates on the sphere with respect to the standard spherical
coordinate decomposition.) Thus, the domain
space is the modified version of
$$
\{u\in x^{-1}L^2:\ x^{-1}(x^2D_x+\sigma)u,D_{y_j}u\in x^{-1}L^2\},
$$
with the modification just so that the operator maps it to the target
which simply has $1$ additional order of decay
\begin{equation}\label{eq:target-sp-intro}
\{u\in x^{0}L^2:\ x^{-1}(x^2D_x+\sigma)u,D_{y_j}u\in x^{0}L^2\}.
\end{equation}
Using the variable order Fredholm theory it is straightforward
to show (using a variable order that is in $(-1/2,0)$ at the incoming
radial set, and is $<-1$ at the outgoing radial set) that the outgoing
inverse indeed has the property that under this additional regularity
of the input (in the target space), the output lies in the additional
regularity domain. However, it is harder to directly run Fredholm
arguments since these involve duality and inversion, and the
additional module regularity gives a dual space for which it is harder to prove
estimates since the dual of, for instance, the space \eqref{eq:target-sp-intro} is
$$
x^{0}L^2 +x^{-1}(x^2D_x+\sigma) x^{0}L^2+\sum_j D_{y_j}x^{0}L^2,
$$
see \cite[Appendix~A]{Melrose-Vasy-Wunsch:Corners}.

A way around this difficulty with dualization, which we pursue in this
paper, is to use even stronger,
second microlocal, spaces, see \cite[Section~5]{Vasy:Zero-energy} in this
scattering context, and see
\cite{Bony:Second,Sjostrand-Zworski:Fractal,Vasy-Wunsch:Semiclassical} in different
contexts. Recall that these second microlocal techniques play a role
in precise analysis at a Lagrangian, or more generally coisotropic, submanifold. These second microlocal
techniques were employed in \cite{Vasy:Zero-energy} due to the
degeneration of the principal symbol at zero energy, corresponding to
the quadratic vanishing of any dual metric function at the zero
section; the chosen Lagrangian is thus the zero section, really
understood as the zero section at infinity. In
a somewhat simpler way than in other cases, this second microlocalization at the zero
section is accomplished by simply using the
b-pseudodifferential operator algebra of Melrose \cite{Melrose:Atiyah}. In an informal way, this arises
by blowing up the zero section of the scattering cotangent bundle at
the boundary, though a more precise description (in that it makes
sense even at the level of
quantization, the spaces themselves are naturally diffeomorphic)  is
the reverse: blowing up the corner (fiber infinity over the boundary)
of the b-cotangent bundle: see Section~\ref{sec:pseudo} for more
detail and additional references. (But the basic point is that the scattering
vector fields $x^2D_x,xD_{y_j}$ are replaced by totally
characteristic, or b-, vector fields $xD_x,D_{y_j}$.) In \cite{Vasy:Zero-energy} this was used to
show a uniform version of the resolvent estimates down to zero energy
using variable differential order b-pseudodifferential
operators. Indeed, the
differential order of these, cf.\ the aforementioned blow-up of the
corner, corresponds to the scattering decay order away from the zero
section, thus this allows the uniform analysis of the problem to zero energy.
However, here the decay order (of the b-ps.d.o.) is also crucial, for it corresponds to
the spaces on which the exact zero energy operator (i.e.\ with $\sigma=0$) is Fredholm, which,
with $\Hb$ denoting weighted b-Sobolev spaces relative to the
scattering (metric) $L^2$-density, are $\Hb^{\tilde r,l}\to\Hb^{\tilde
  r-2,l+2}$ with $|l+1|<\frac{n-2}{2}$, where $\tilde r$ is the
variable order (which is irrelevant at zero energy since the operator
is elliptic in the b-pseudodifferential algebra then). (The more
refined, fully 2-microlocal, spaces $\Hscb^{s,r,l}$, see Section~\ref{sec:pseudo}, corresponding to the blow-up of
the corner, have three orders: sc-differential $s$,
sc-decay/b-differential $r$ and b-decay $l$; using all of these is convenient,
as the operators are sc-differential-elliptic, so one can use easily that
this order, $s$, is essentially irrelevant; this modification is not crucial.)

Now, for $\sigma\neq 0$ real, one can work in a second microlocal
space by simply conjugating the spectral family $P(\sigma)$ by
$e^{i\sigma/x}$ (this being the multiplier from the right),
with the point being that this conjugation acts as a canonical
transformation of the scattering cotangent bundle, moving the outgoing
radial set to the zero section, see Sections~\ref{sec:pseudo}-\ref{sec:operator}. Then the general second microlocal analysis
becomes b-analysis. Indeed, note that this conjugation
moves
$$
x^{-1}(x^2D_x+\sigma),\ \text{resp.}\ x^{-1}(xD_{y_j}),
$$
to
$$
x^{-1}(x^2D_x)=xD_x,\ \text{resp.}\ x^{-1}(xD_{y_j})=D_{y_j},
$$
so the
Lagrangian regularity becomes b-differential-regularity indeed. Notice
that the conjugate of the simplest model operator
$$
P(\sigma)=(x^2D_x)^2+i(n-1)x(x^2 D_x)+x^2\Delta_h-\sigma^2\in \Diffsc^2(X)\subset\Diffb^2(X),
$$
which is the Laplacian of the conic metric
$g_0=x^{-4}\,dx^2+x^{-2}h(y,dy)$ (considered near the `large end', $x=0$),
is then
\begin{equation*}\begin{aligned}
\hat
P(\sigma)&=e^{-i\sigma/x}P(\sigma)e^{i\sigma/x}=(x^2D_x-\sigma)^2+i(n-1)x(x^2
D_x-\sigma)+x^2\Delta_y-\sigma^2\\
&=(x^2D_x)^2-2\sigma(x^2D_x)+i(n-1)x(x^2
D_x)-i(n-1)x\sigma +x^2\Delta_y\\
&\qquad\qquad\in x\Diffb^2(X),
\end{aligned}\end{equation*}
which has one additional order of vanishing in this b-sense (the
factor of $x$ on the right). (This is
basically the effect of the zero section of the sc-cotangent bundle
being now in the characteristic set.) Moreover, to leading order in
terms of the b-decay sense, i.e.\ modulo $x^2\Diffb^2(X)$, this is the
simple first order operator
$$
-2\sigma x\Big(xD_x+i\frac{n-1}{2}\Big).
$$
(In general, decay is
controlled by the normal operator of a b-differential operator,
which arises by setting $x=0$ in its coefficients after factoring out an
overall weight, and where one thinks of it as acting on functions on
$[0,\infty)_x\times\pa X$, of which $[0,\delta_0)_x\times\pa X$ is
identified with a neighborhood of $\pa X$ in $X$.)
This is non-degenerate for $\sigma\neq 0$ in that, on suitable spaces,
it has an invertible normal operator; of course, this is not an
elliptic operator, so some care is required. Notice that terms like
$(x^2D_x)^2$ and $\sigma x^2D_x$ have the same scattering decay order,
i.e.\ on the front face of the blown up b-corner they are equally
important. Thus, we use real principal type plus radial points
estimates at finite points in the scattering cotangent bundle,
together with a radial point type analysis of the zero section, but
now interpreted in the second microlocal setting. This gives, for the
general class of operators discussed in Section~\ref{sec:operator},
which includes the spectral family of the Laplacian of Riemannian
scattering metrics, with $\im\alpha_\pm(\sigma)=0$ in the case of the operator
discussed above:

\begin{thm}\label{thm:main}
Suppose that $P(\sigma)$ satisfies the hypotheses of
Section~\ref{sec:operator} and let $\alpha_+(\sigma)$,
$\alpha_-(\sigma)$ be as given there, see
\eqref{eq:actual-normal-op-hat} and \eqref{eq:other-normal-op-hat}; thus,
$\im\alpha_\pm(\sigma)=0$ if $P(\sigma)$ is formally
self-adjoint, and $\mp 2\sigma x\alpha_\pm(\sigma)$ is the subprincipal
symbol at $\mp \sigma\,\frac{dx}{x^2}$. Suppose also that
\begin{equation*}
\tilde
r+\ell+1/2-\im\alpha_-(\sigma)>0,\qquad\ell+1/2-\im\alpha_+(\sigma)<0,
\end{equation*}
and $K$ a compact subset of $\{\sigma\in\Cx:\
\im\sigma\geq 0,\ \sigma\neq 0\}$. For
$\sigma\in K$, let
$$
\hat
P(\sigma)=e^{-i\sigma/x}P(\sigma)e^{i\sigma/x}.
$$

Then
$$
\hat P(\sigma):\{u\in\Hb^{\tilde r,\ell}:\ \hat P(\sigma)u\in \Hb^{\tilde r,\ell+1}\}\to\Hb^{\tilde r,\ell+1}
$$
is Fredholm, and if $P(\sigma)=P(\sigma)^*$ for $\sigma\in\RR\setminus\{0\}$ then it is invertible, with this inverse being the $\pm i0$ resolvent limit
(in the sense of $\sigma^2\pm i0$) of
$P(\sigma)$ corresponding to $\pm\re\sigma>0$, and the
norm of $\hat P(\sigma)^{-1}$ as an element of $\cL(\Hb^{\tilde
  r,\ell+1},\Hb^{\tilde r,\ell})$ is uniformly bounded for
$\sigma\in K$.
Furthermore, invertibility is preserved under suitably small
perturbations of $P(\sigma)$.

These statements also hold if both inequalities on the orders are
reversed:
\begin{equation*}
\tilde
r+\ell+1/2-\im\alpha_-(\sigma)<0,\qquad
\ell+1/2-\im\alpha_+(\sigma)>0,
\end{equation*}
provided one also reverses the sign of
$\im\sigma$ to $\im\sigma\leq 0$, and thus takes
$\{\sigma\in\Cx:\
\im\sigma\leq 0,\ \sigma\neq 0\}$ above.

Furthermore, the statements hold on second microlocal spaces, recalled
in Section~\ref{sec:pseudo},
$$
\hat P(\sigma):\{u\in\Hscb^{s,r,\ell}:\ \hat P(\sigma)u\in \Hscb^{s-2,r+1,\ell+1}\}\to\Hscb^{s-2,r+1,\ell+1}
$$
with
$$
\ell+1/2-\im\alpha_+(\sigma)<0,\qquad r+1/2-\im\alpha_-(\sigma)>0,
$$
as well as with
$$
\ell+1/2-\im\alpha_+(\sigma)>0,\qquad r+1/2-\im\alpha_-(\sigma)<0
$$
(again reversing the sign of $\im\sigma$).
\end{thm}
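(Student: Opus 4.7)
The plan is to prove the Fredholm statement by establishing two semi-Fredholm estimates---one for $\hat P(\sigma)$ and one for its adjoint $\hat P(\sigma)^*$---entirely in the second microlocal (b-)picture obtained by conjugation, and then to upgrade to invertibility in the self-adjoint case using uniqueness of outgoing solutions. First, I would assemble a global microlocal estimate of the form
\begin{equation*}
\|u\|_{\Hscb^{s,r,\ell}}\leq C\|\hat P(\sigma)u\|_{\Hscb^{s-2,r+1,\ell+1}}+C\|u\|_{\Hscb^{-N,-N,-N}}
\end{equation*}
by patching together local estimates: sc-ellipticity away from the sc-characteristic set (which makes $s$ immaterial), standard b-elliptic estimates away from the b-characteristic set (controlling $r$), real principal type propagation of b-regularity along the $\hat p$-Hamilton flow in the sc-characteristic set, classical radial point estimates at the preimage of the incoming radial set (where the threshold $r+1/2-\im\alpha_-(\sigma)>0$ vs.\ $<0$ dictates which direction of propagation is admissible), and a \emph{new} radial point estimate at the zero section of the scattering cotangent bundle, which after conjugation sits in the b-characteristic set; the indicial/threshold number there is $\ell+1/2-\im\alpha_+(\sigma)$, whose sign determines which of `above threshold' or `below threshold' estimates can be run. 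The error term $\|u\|_{\Hscb^{-N,-N,-N}}$ is compactly included in the domain, yielding a semi-Fredholm estimate.

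The main analytic obstacle is the radial point estimate at the (second microlocal) zero section, since the conjugated operator is only characteristic there in the sc-sense while being non-elliptic in the b-sense; one has to use the fact that the leading b-normal-operator model, as computed in the introduction to be essentially $-2\sigma x(xD_x+i\frac{n-1}{2})$ up to the subprincipal correction $\alpha_+(\sigma)$, is invertible as a Mellin multiplier exactly when $\ell+1/2-\im\alpha_+(\sigma)$ avoids its indicial set; this gives the stated threshold and lets one feed the normal operator inversion into a commutator/positive commutator argument in the b-calculus. Once the estimate on the $\Hscb$ scale is in hand, the statement on the $\Hb$ scale follows by forgetting the sc-differential order $s$ using sc-ellipticity of $P(\sigma)$, equivalently reading $\Hb^{\tilde r,\ell}=\Hscb^{s,\tilde r,\ell}$ through the sc-elliptic part.

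Next, I would run the completely parallel argument for $\hat P(\sigma)^*$ on the dual spaces; the dual thresholds are automatically the reversed inequalities in the second set of hypotheses, and the change $\im\sigma\to-\im\sigma$ corresponds to the fact that taking adjoints flips the sign of the subprincipal imaginary part and the direction in which high regularity must be placed (i.e.\ swaps incoming and outgoing roles). Both semi-Fredholm estimates together give the Fredholm conclusion. For invertibility in the self-adjoint case with $\sigma\in\RR\setminus\{0\}$, elements of the kernel are outgoing solutions with more decay than the limiting absorption ansatz permits; a boundary pairing argument (pairing $u$ with $P(\sigma)u=0$ against a cutoff and using the explicit asymptotic expansion at $\pa X$ dictated by the b-normal operator) forces them to vanish, and the Fredholm index being zero then gives surjectivity. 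Identification with the $\pm i0$ limit in the $\sigma^2$ sense comes from the matching of the radial-point threshold conditions with the prescribed outgoing/incoming asymptotics and the stability of the Fredholm problem as $\im\sigma\downarrow 0$.

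Finally, uniform bounds on $\hat P(\sigma)^{-1}$ over the compact set $K$ follow from a standard contradiction argument: if not, choose $\sigma_n\in K$ and $u_n$ with $\|u_n\|_{\Hb^{\tilde r,\ell}}=1$, $\|\hat P(\sigma_n)u_n\|_{\Hb^{\tilde r,\ell+1}}\to 0$; the semi-Fredholm estimate plus compactness of the inclusion into $\Hb^{-N,-N}$ extracts a subsequence converging to a nontrivial kernel element of $\hat P(\sigma_\infty)$ (with $\sigma_\infty\in K$), contradicting invertibility at $\sigma_\infty$---here one must check the radial point and normal operator estimates are locally uniform in $\sigma$, which is routine since all threshold inequalities are open and $\alpha_\pm(\sigma)$ depend continuously on $\sigma$. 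Stability under small perturbations is then immediate from the openness of the Fredholm-invertible condition in the operator norm topology given these uniform estimates.
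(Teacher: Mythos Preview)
Your outline follows the paper's approach closely: elliptic estimates near sc-fiber infinity, standard sc-radial-point and propagation estimates at the radial set away from the zero section, a positive commutator estimate at the (second-microlocalized) zero section giving the threshold $\ell+1/2-\im\alpha_+(\sigma)$, normal operator inversion via Mellin transform, duality for the adjoint, and then boundary pairing plus the contradiction argument for invertibility and uniformity.

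Two technical points to correct. First, the symbolic/commutator estimates by themselves only produce an error term $\|u\|_{\Hscb^{-N,-N,\ell}}$ with \emph{no} gain in the b-decay slot; the normal operator step then improves this to $\|u\|_{\Hscb^{-N,-N,\ell-\delta}}$, not to $-N$ in the third index. You cannot push the b-decay order below the indicial threshold, so the error never reaches arbitrary $-N$ there; but $\ell-\delta$ already gives the compact inclusion you need. It is worth keeping these as two distinct steps rather than folding the normal operator into the commutator argument: the threshold happens to coincide for both, but their roles (sign of the commutator versus decay gain) are different. Second, the identification is $\Hb^{\tilde r,\ell}=\Hscb^{\tilde r,\tilde r+\ell,\ell}$, not $\Hscb^{s,\tilde r,\ell}$; this is why the $\Hb$-threshold reads $\tilde r+\ell+1/2-\im\alpha_-(\sigma)$ while the $\Hscb$-threshold reads $r+1/2-\im\alpha_-(\sigma)$. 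Finally, for triviality of the kernel in the self-adjoint case the boundary pairing only shows kernel elements lie in $\dot{\mathcal C}^\infty(X)$; you still need a unique continuation argument at infinity to conclude they vanish.
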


\begin{rem}
Note that $\Hb^{\tilde r,\ell}=\Hscb^{\tilde r,\tilde r+\ell,\ell}$, so $\tilde
r+\ell$ is the scattering decay order away from the zero section. Thus
the statements on $\Hb$ and $\Hscb$ spaces in the theorem are very
similar, including in terms of the restrictions on the orders, with
the main advantage of the $\Hscb$ statements being the ability to use
ellipticity in the sc-differential sense, making the order $s$ arbitrary.
\end{rem}

\begin{rem}
Here $\alpha_\pm(\sigma)$ are functions on $\pa X$, and the stated
inequalities, such as $\ell+1/2-\im\alpha_+(\sigma)<0$, are assumed to
hold {\em at every point} on $\pa X$.

In the case of the vector valued version, i.e.\ if $P(\sigma)$ acts on
sections of a vector bundle equipped with a fiber inner product, such as on scattering one-forms or
symmetric scattering 2-cotensors, the statement and the proof are
completely parallel, with the only change that now $\alpha_\pm(\sigma)$ are
valued in endomorphisms, and the inequalities involving $\alpha_\pm$
are understood in the sense of bounds for endomorphisms (such as
positive definiteness).
\end{rem}

\begin{rem}
We in fact show regularity statements below of the kind that if
$u\in\Hscb^{s',r',\ell}$ with $r'$ satisfying an inequality like $r$, and
if $\hat P(\sigma)u\in\Hscb^{s-2,r+1,\ell+1}$, then
$u\in\Hscb^{s,r,\ell}$, and the estimate for $u$ in terms of $\hat
P(\sigma)u$ (and a relatively compact term) implied by the Fredholm property
holds. See for instance Proposition~\ref{prop:real-sigma-Fredholm}.

One can also improve the b-decay order $\ell$; see Remark~\ref{rem:decay-improve}.
\end{rem}

Notice that, in terms of the limiting absorption principle, there are
two ways to implement this conjugation: one can conjugate either by
$e^{i\sigma/x}$, where $\sigma$ is now complex, or by
$e^{i\re\sigma/x}$. The former, which we pursue, gives much stronger
spaces when $\sigma$ is not real with $\im\sigma>0$ (which is from where we
take the limit), as $e^{i\sigma/x}$ entails an
exponentially decaying weight $e^{-\im\sigma/x}$, so if the original
operator is applied to $u$, the conjugated operator is applied to
$e^{\im\sigma/x} u$ times an oscillatory factor.

We also note that under non-trapping assumptions, mutatis mutandis,
all the arguments extend to the large $\sigma$ (with $\im\sigma$
bounded) setting via a semiclassical version of the argument presented
below, as we show in Section~\ref{sec:high}, namely one has

\begin{thm}\label{thm:high}
With $\hat P(\sigma)$ as above,
\begin{equation*}
\tilde
r+\ell+1/2-\im\alpha_-(\sigma)>0,\quad\ell+1/2-\im\alpha_+(\sigma)<0,\quad
r+1/2-\im\alpha_-(\sigma)>0,
\end{equation*}
and under the
additional assumption that the bicharacteristic flow is non-trapping,
for and $\sigma_0>0$ there is $C>0$ such that high energy
estimates hold on the semiclassical spaces, $h=|\sigma|^{-1}$,
$\im\sigma\geq 0$:
$$
\|u\|_{\Hbh^{\tilde r,l}}\leq C|\sigma|^{-1}\|\hat
P(\sigma)u\|_{\Hbh^{\tilde r,l+1}}
$$
and
$$
\|u\|_{\Hscbh^{s,r,l}}\leq C|\sigma|^{-1}\|\hat P(\sigma)u\|_{\Hscbh^{s-2,r+1,l+1}}
$$
uniformly in $|\sigma|>\sigma_0$.

The analogous conclusion also holds with
\begin{equation*}
\tilde
r+\ell+1/2-\im\alpha_-(\sigma)<0,\quad\ell+1/2-\im\alpha_+(\sigma)>0,\quad
r+1/2-\im\alpha_-(\sigma)<0
\end{equation*}
and $\im\sigma\leq 0$.
\end{thm}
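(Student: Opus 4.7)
The plan is to reprove Theorem~\ref{thm:main} with $h=|\sigma|^{-1}$ as a semiclassical parameter throughout, so that each individual microlocal estimate gains a power of $h$. After writing $\sigma=h^{-1}\omega$ with $\omega$ on the unit circle (in the closed upper or lower half-plane), the conjugated operator $\hat P(\sigma)$ fits into a semiclassical b-, respectively sc-b-, calculus in which the principal symbol and subprincipal symbol structure at both radial sets persist uniformly down to $h\to 0$. In particular, the scattering radial set at $+\sigma\,dx/x^2$ (the `incoming' one) stays at finite sc-momentum, while conjugation has moved the `outgoing' radial set to the zero section of the scattering cotangent bundle, which in semiclassical b-terms is a radial set at finite b-momentum; the flow, ellipticity, and threshold orders are the same as in the non-semiclassical setting, with the added requirement $r+1/2-\im\alpha_-(\sigma)>0$ reflecting the fact that the semiclassical b-principal symbol is now non-trivial at fiber infinity of the b-cotangent bundle and the corresponding radial point estimate needs an above-threshold order there as well.

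First I would set up the semiclassical versions of the relevant calculi (semiclassical b- and sc-b-pseudodifferential operators) and the semiclassical Sobolev spaces $\Hbh^{\tilde r,\ell}$, $\Hscbh^{s,r,\ell}$, and verify that $\hat P(\sigma)$ lies in $h^{-2}\Psibh^{2,(0,-1)}$, with the usual principal symbol behaviour on the respective compactifications of phase space, the semiclassical principal symbol being the classical sc-principal symbol of $P(\sigma)$ restricted to momentum $\omega$. The estimate we aim at,
\begin{equation*}
\|u\|_{\Hbh^{\tilde r,\ell}}\leq C h\,\|\hat P(\sigma)u\|_{\Hbh^{\tilde r,\ell+1}},
\end{equation*}
follows, as in the standard semiclassical scheme, by combining a semiclassical elliptic estimate off the characteristic set, semiclassical real principal type propagation in the characteristic set, semiclassical radial point estimates at each of the two radial sets, and the non-trapping assumption, which provides that every bicharacteristic in the characteristic set tends backwards to the source radial set (incoming, under the sign choices of the theorem) and forwards to the sink (outgoing, i.e.\ the zero section post-conjugation).

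Concretely I would argue in the following order. (1) Apply the semiclassical radial point estimate at the incoming radial set, using $\tilde r+\ell+1/2-\im\alpha_-(\sigma)>0$ (and, in the $\Hscb$ version, $r+1/2-\im\alpha_-(\sigma)>0$), to obtain $u$ in the respective semiclassical space microlocally near that radial set, with an $O(h)$ factor in front of $\hat P(\sigma)u$. (2) Propagate this control forward along the Hamilton flow, using the semiclassical propagation theorem, to any compact subset of the characteristic set; non-trapping ensures every bicharacteristic is captured. (3) Apply the semiclassical b-radial point estimate at the zero section, now a sink, using the above-threshold condition $\ell+1/2-\im\alpha_+(\sigma)<0$ to absorb the boundary contribution; in the $\Hscb$ formulation the variable-order $r$ is handled simultaneously by the same argument, since the radial set there is at fiber-infinity of the b-cotangent bundle. (4) Combine with a semiclassical elliptic estimate away from the characteristic set to close up, giving the stated $|\sigma|^{-1}$ gain. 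The sign reversal case is identical after reversing time along the flow and exchanging the roles of source and sink.

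The main obstacle is step (3): the second-microlocal radial point estimate at the zero section has to be executed in the semiclassical category, and one must track carefully that the sc-principal symbol, the b-principal symbol at fiber infinity, and the semiclassical symbol are simultaneously controlled by a single positive commutator in the semiclassical b-calculus, with threshold conditions inherited from both $\alpha_+(\sigma)$ (b-decay order) and the semiclassical radial structure (the order $r$). This requires a commutant of the same type as in Theorem~\ref{thm:main} but quantized in the semiclassical sc-b-calculus, with the sign of $\im\sigma$ providing the favourable (or at least non-destructive) contribution from the imaginary part of $\hat P(\sigma)/\sigma$. Once this combined radial estimate is available, the rest is a standard assembly via non-trapping propagation, and the invertibility for large $\sigma$ then follows from the Fredholm statement of Theorem~\ref{thm:main} together with the high-energy bound, which rules out a non-trivial kernel for $h$ sufficiently small.
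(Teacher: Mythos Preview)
Your proposal is correct and follows essentially the same strategy as the paper's Section~\ref{sec:high}: semiclassical rescaling $\hat P_h(z)=h^2\hat P(h^{-1}z)$, incoming radial point estimate, non-trapping propagation, and a second-microlocal commutator estimate at the zero section, with the sign of $\im\sigma$ handled as in Proposition~\ref{prop:imag-control}.

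Two execution details the paper highlights that you should be aware of. First, because $\hat P_h(z)$ is not semiclassically elliptic over $X^\circ$, the commutant at the zero section must now include a spatial cutoff $\chi_2(x)$ in addition to the $\chi_0,\chi_1,\chi_3$ of the bounded-$\sigma$ argument; the term from $d\chi_2$ has the same sign as the one from $d\chi_0$ and is controlled by propagation from the incoming radial set via non-trapping, not by ellipticity. Second, the closing step is simpler than you suggest: the analogue of Proposition~\ref{prop:imag-control} produces an error of the form $h^N\|u\|_{\Hbh^{-N,l}}$, which for small $h$ is absorbed directly into the left-hand side, so the normal operator argument (Lemma~\ref{lemma:simple-normal}) is not needed here and there is no Fredholm-plus-kernel step. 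Finally, the condition $r+1/2-\im\alpha_-(\sigma)>0$ is not a new semiclassical constraint at b-fiber infinity; it is the same sc-decay threshold at the non-zero-section radial set already present in Theorem~\ref{thm:main} for the $\Hscb$ spaces.
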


\begin{rem}
Note that the estimates in Theorem~\ref{thm:high} have a loss of
$|\sigma|^{-1}$ relative to elliptic large-parameter estimates that
hold for $P(\sigma)$ when $\sigma$ is in a cone bounded away from the
real axis: the latter correspond to $P(\sigma):\Hsch^{s,r}\to\Hsch^{s-2,r}$. This is due to the fact that in the more precise
function spaces used in this statement $\hat P(\sigma)$ is not elliptic.
\end{rem}

The structure of this paper is the following. In
Section~\ref{sec:pseudo} we recall the necessary background for
pseudodifferential operator algebras. In Section~\ref{sec:operator} we
discuss in detail the assumptions on $P(\sigma)$, and the form of the
conjugate $\hat P(\sigma)$, as well as elliptic estimates. In
Section~\ref{sec:commutator} we then provide the positive commutator estimates
that prove Theorem~\ref{thm:main}. Finally in Section~\ref{sec:high}
we prove the high energy version, Theorem~\ref{thm:high}.

I am very grateful for numerous discussions with Peter Hintz, various
projects with whom have formed the basic motivation for this work. I
also thank Dietrich H\"afner and  Jared Wunsch for their interest in
this work which helped to push it towards completion, and Jesse
Gell-Redman for comments improving it.

\section{Pseudodifferential operator algebras}\label{sec:pseudo}
Three operator algebras play a key role in this paper on the manifold
with boundary $X$. Below we use $x$ as a boundary defining function,
and $y_j$, $j=1,\ldots,n-1$, as local coordinates on $\pa X$, extended to a collar
neighborhood of the boundary. We also use the convention that vector
fields and differential operators, of various classes discussed below,
have smooth, i.e.\ $\CI(X)$, coefficients unless otherwise
indicated. The notation for symbolic coefficients of order $l$ is
$S^{l}\Diff(X)$, where $\Diff$ obtains subscripts according to the
algebra being studied. Here recall that symbols, or conormal
functions, of order $l$, are $\CI(X^\circ)$ functions which are
bounded by $C_0x^{-l}$, and for which iterated application of vector
fields tangent to the boundary $\pa X$, i.e.\ elements of $\Vb(X)$, results in
a similar (with different constants) bound. In
local coordinates, elements of $\Vb(X)$ are linear combinations of
$x\pa_x$ and $\pa_{y_j}$, so the contrast between $\CI$ and $S^0$
coefficients is regularity with respect to $\pa_x$ vs.\ $x\pa_x$. Classical symbols are those with a one-step polyhomogeneous asymptotic
expansion at $\pa X$; thus, classical elements of $S^0$ are exactly
elements of $\CI$.

The first algebra that plays a role is Melrose's scattering algebra, \cite{RBMSpec}, $\Psisc^{s,r}(X)$; the
spectral family of the Laplacian of a scattering metric lies in
$\Psisc^{2,0}(X)$. This algebra is based on the Lie algebra of scattering
vector fields $\Vsc(X)=x\Vb(X)$, where we recall that $\Vb(X)$ is the Lie algebra of
b-vector fields, i.e.\ vector fields tangent to $\pa X$, and the
corresponding algebra $\Diffsc(X)$ consisting of finite sums of finite
products of scattering vector fields and elements of $\CI(X)$. In
local coordinates as above, elements of $\Vsc(X)$ are linear
combinations of $x^2\pa_x,x\pa_{y_j}$. These
vector fields are all smooth sections of the vector bundle $\Tsc X$,
with local basis $x^2\pa_x,x\pa_{y_j}$,
and thus their principal symbols are exactly smooth (in the base
point) fiber-linear functions on the dual bundle $\Tsc^*X$ (with local
basis $x^{-2}\,dx,x^{-1}\,dy_j$, the coefficients, which give fiber
coordinates, are denoted by $\tau$ and $\mu_j$, i.e.\  a covector is
of the form $\tau (x^{-2}\,dx)+\sum_j\mu_j(x^{-1}\,dy_j)$); the differential operators
have thus principal symbols which are fiber-polynomials. In order to
familiarize ourselves with this, we note that if $X$ is the radial
compactification $\overline{\RR^n}$ of $\RR^n_z$, i.e.\ a sphere at infinity is added, so
that the result is a closed ball, with $x=r^{-1},y_j$ being local
coordinates near a point on the boundary, where $r$ is the Euclidean
radius function and $y_j$ are local coordinates on the sphere, then
$\Vsc(X)$ is exactly the collection of vector fields of the form $\sum
a_j\pa_{z_j}$, where $a_j$ are smooth on $X$. Correspondingly, in this
case, $\Tsc^*X$ is naturally identified with $\overline{\RR^n_z}\times
(\RR^n)_\zeta^*$, i.e.\ (a partially compactified version of) the most familiar phase space in microlocal
analysis. The class of pseudodifferential operators $\Psisc^{s,r}(X)$ in this case,
going back to Parenti and Shubin
\cite{Parenti:Operatori,Shubin:Pseudo}, is standard quantizations of
symbols $a\in S^{s,r}(\Tsc^*X)$ on $\Tsc^*X$ of orders $(s,r)$, where $s$ is the differential
and $r$ is the decay order:
$$
|D_z^\alpha D_\zeta^\beta a(z,\zeta)|\leq
C_{\alpha\beta}\langle z\rangle^{r-|\alpha|}\langle \zeta\rangle^{s-|\beta|}.
$$
The phase
space in general for $\Psisc^{s,r}(X)$ is thus $\Tsc^*X$,
quantization maps can be realized by using a partition of unity within
coordinate charts each of which is either disjoint from the boundary
or is of the form as above, i.e.\  a coordinate chart on the sphere
times $[0,\ep)_x$, which in turn can be identified with an
asymptotically conic region at
infinity in Euclidean space, so the $\RR^n$-quantization can be
used. (One also adds general Schwartz kernels which are Schwartz on
$X\times X$, i.e.\ are in $\dCI(X\times X)$.) The principal symbols in
this algebra are taken modulo lower order terms in terms of both orders,
i.e.\ in
$$
S^{s,r}(\Tsc^*X)/S^{s-1,r-1}(\Tsc^*X)=S^{s,r}(\overline{\Tsc^*}X)/S^{s-1,r-1}(\overline{\Tsc^*}X),
$$
where $\overline{\Tsc^*}X$ denotes the fiber radial compactification
of $\Tsc^*X$. In particular, vanishing of this principal symbol
captures relative compactness on $L^2_\scl$-based Sobolev spaces; here
$L^2_\scl$ is the $L^2$-space with respect to any Riemannian sc-metric
(i.e.\ a smooth positive definite inner product on $\Tsc X$), which is
the standard $L^2$ space on $\RR^n$ in case $X=\overline{\RR^n}$.

The second algebra is Melrose's b-algebra \cite{Melrose:Atiyah}, whose Lie
algebra of vector fields, $\Vb(X)$, has already been discussed. In
local coordinates, elements of the latter are linear combinations of
$x\pa_x$ and $\pa_{y_j}$, so again are all smooth sections of a vector
bundle, $\Tb X$, with local basis $x\pa_x$ and $\pa_{y_j}$, and thus
their principal symbols are smooth fiber-linear functions on the dual bundle
$\Tb^*X$, with local basis $x^{-1}\,dx$ and $dy_j$ (with coefficients
denoted by $\taub$ and $(\mub)_j$, so covectors are written as $\taub\,(x^{-1}\,dx)+\sum_j(\mub)_j\,dy_j$). The corresponding
pseudodifferential algebra $\Psib^{\tilde r,l}(X)$, with $\tilde r$
the differential, $l$ the decay, order, which Melrose defined via
describing their Schwartz kernels on a resolved space, called the
b-double space, is
closely related to H\"ormander's uniform algebra $\Psi_\infty^{\tilde
  r}(\RR^n)$ \cite[Chapter~18.1]{Hor}. Namely, using $t=-\log x$ we are working in a cylinder
$[T,\infty)_t\times U$, $U$ a coordinate chart on $\pa X$, and for
instance Schwartz kernels of elements of $x^{-l} \Psi_\infty^{\tilde
  r}(\RR^n)=e^{lt}\Psi_\infty^{\tilde
  r}(\RR^n)$ which have support (with prime denoting the right,
unprime the left, factor on the product space) in $|t-t'|<R$ are
elements of $\Psib^{\tilde r,l}(X)$ and indeed capture (locally)
$\Psib^{\tilde r,l}(X)$ modulo smoothing operators,
$\Psib^{-\infty,l}(X)$. In general one adds smooth Schwartz
kernels which are superexponentially decaying in $|t-t'|$, as well as
Schwartz kernels relating to disjoint coordinate charts on $\pa X$
with similar decay, see \cite[Section~6]{Vasy:Minicourse} for a more
thorough description from this perspective. In this
algebra the principal symbol map captures only the behavior at
fiber infinity, i.e.\ in the differential order sense, and takes
values in
$$
S^{\tilde r,l}(\Tb^*X)/S^{\tilde r-1,l}(\Tb^*X)=S^{\tilde
  r,l}(\overline{\Tb^*}X)/S^{\tilde r-1,l}(\overline{\Tb^*}X).
$$
This
principal symbol is a $*$-algebra homomorphism, so
$$
\sigma_{\tilde r+\tilde r',l+l'}(AA')=\sigma_{\tilde
  r,l}(A)\sigma_{\tilde r',l'}(A'),\qquad A\in\Psib^{\tilde r,l}(X),\ A'\in\Psib^{\tilde r',l'}(X),
$$
so the algebra is commutative to leading order in the differential
sense, i.e.
$$
[A,A']\in\Psib^{\tilde r+\tilde r'-1,l+l'}(X),
$$
but there is no gain in decay. The principal symbol of the commutator
as an element of $\Psib^{\tilde r+\tilde r'-1,l+l'}(X)$ is given by the
usual Hamilton vector field expression:
$$
\sigma_{m+m'-1,l+l'}([A,A'])=\frac{1}{i}\sH_a a',\ a=\sigma_m(A),\ a'=\sigma_{m'}(A').
$$
For $l=0$, $\sH_a$ is a b-vector field on $\Tb^*X$, i.e.\ is tangent
to $\Tb^*_{\pa X}X$ (and in general it
simply has an extra weight factor); indeed in local coordinates it
takes the form
\begin{equation}\begin{aligned}\label{eq:b-Ham-vf}
&(\pa_{\taub} a) (x\pa_x)-(x\pa_x a) \pa_{\taub}+\sum 
_j\big((\pa_{(\mub)_j}a)\pa_{y_j}-(\pa_{y_j}a)\pa_{(\mub)_j}\big) \\
&\qquad=(-\pa_{\taub} a) \pa_t-\pa_t a (-\pa_{\taub})+\sum 
_j\big((\pa_{(\mub)_j}a)\pa_{y_j}-(\pa_{y_j}a)\pa_{(\mub)_j}\big),
\end{aligned}\end{equation}
where the $-$ signs in the $\pa_t$-version correspond to
$\taub\,\frac{dx}{x}=-\taub\,dt$; notice that the second line is the
standard form of the Hamilton vector field taking into account that
$\taub$ is the {\em negative} of the canonical dual coordinate of $t$.

Principal symbol based constructions and considerations (ellipticity,
propagation of singularities, etc.) do {\em not} give rise to relatively compact errors on
$L^2_\bl$-based Sobolev spaces; here $L^2_\bl$ is the $L^2$-space with
respect to any Riemannian b-metric
(i.e.\ a smooth positive definite inner product on $\Tb X$), which in
the cylindrical picture above is simply the standard $L^2$ space on
the cylinder. However, in addition there is a normal operator, which
captures the behavior of an element of $\Psib^{\tilde r,l}(X)$ at
$X$. For differential operators, $P\in S^0\Diffb^{\tilde r,l}(X)$,
which is at least to leading order at the boundary is
smooth (which in the cylindrical picture means that the coefficients have a limit as
$t\to+\infty$, with exponential convergence to the limit), this
amounts to restricting the coefficients of $x^l$ times the operator to the boundary and obtaining
a model operator on $[0,\infty)_x\times\pa X$ which is dilation
invariant in $x$ (which amounts to translation invariance in $t$ on
$\RR_t\times\pa X$);
there is an analogous statement for pseudodifferential operators. If
an operator is elliptic in the principal symbol sense, and its normal
operator is invertible on a weighted Sobolev space, then the original
operator is Fredholm between correspondingly weighted b-Sobolev spaces
(shifted by the decay order $l$ we factored out).

There is a common resolution of these two algebras in the form of the
third relevant algebra, which is the second microlocalized, at the
zero section, scattering algebra, $\Psiscb^{s,r,l}(X)$, which is
described in more detail in this context in
\cite[Section~5]{Vasy:Zero-energy}. Here the
symbol space $S^{s,r,l}$ can be arrived at in two different ways. From the second
microlocalization perspective, one takes $\overline{\Tsc^*}X$, and
blows up the zero section over the boundary $o_{\pa X}$. The new front
face is naturally identified with $\Tb^*_{\pa X} X$. In this
perspective, one is looking at scattering pseudodifferential operators
which are singular at the zero section.
Now the three orders of $\Psiscb^{s,r,l}(X)$, and correspondingly of $S^{s,r,l}$, are the 
sc-differential order $s$, the sc-decay order $r$ and the b-decay order $l$
respectively, i.e.\ they are the symbolic orders of amplitudes used
for the quantization at the three hypersurfaces: sc-fiber infinity,
the lift of $\Tsc^*_{\pa X} X$, and the new front face. Thus, we adopt a second microlocalization-centric 
approach in the order convention, see Figure~\ref{fig:2-micro}.

\begin{figure}[ht]
\begin{center}
\includegraphics[width=120mm]{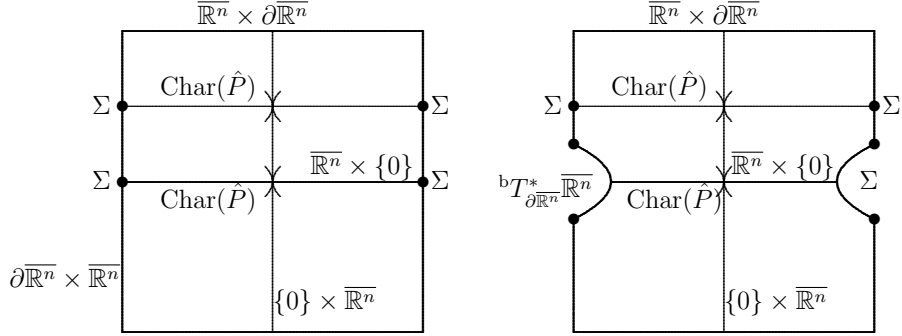}
\end{center}
\caption{Second microlocalized Euclidean space $\RR^{n}$. The left
  hand side is the fiber-compactified sc-cotangent bundle,
  $\overline{\Tsc^*}\overline{\RR^n}=\overline{\RR^n}\times\overline{(\RR^n)^*}$,
  the right hand side is its blow-up at the boundary of the zero
  section. The
  (interior of the) front
  face of the blow-up, shown by the curved arcs, can be identified
  with $\Tb^*_{\pa\overline{\RR^n}}\overline{\RR^n}$. The
  characteristic set of $\hat P(\sigma)$, $\sigma\neq 0$, discussed in
  Section~\ref{sec:operator}, is also shown,
  both from the compactified perspective, as $\Sigma$, which is a
  subset of the boundary, and from the conic perspective, here conic
  in the base (i.e.\ the dilations are in the $\RR^n_z$, spatial, factor), as
  $\Char(\hat P)$. On the second microlocal figure on the right, the
  characteristic set within the boundary lies at the lift of the
  fibers of the sc-cotangent bundle over the boundary; from the
  b-perspective, it thus corresponds to symbolic behavior, and lies at
  fiber infinity. The fiber of cotangent bundle over the
  origin, i.e.\ $\{0\}\times\overline{(\RR^n)^*}$, is also indicated;
  this is only special from the conic (dilation) perspective, in which
  it is the analogue of the zero section in standard microlocal analysis.}
\label{fig:2-micro}
\end{figure}

On the other hand, from an
analytically better behaved, but geometrically equivalent,
perspective, one takes $\overline{\Tb^*}X$, and blows up the corner,
namely fiber infinity at $\pa X$. The new front face is then
$\overline{\Tsc^*_{\pa X}}X$, blown up at the zero section, see Figure~\ref{fig:b-2-micro}. These two resolved
spaces are naturally the same, see \cite[Section~5]{Vasy:Zero-energy},
in the sense that the identity map in
the interior (as both are identified with $\overline{T^*}X^\circ$
there) extends smoothly to the boundary; this can be checked easily by
noting that
$$
\tau\,(x^{-2}\,dx)+\sum_j\mu_j(x^{-1}\,dy_j)=\taub(x^{-1}\,dx)+\sum_j(\mub)_j\,dy_j
$$
shows
$$
\tau=x\taub,\ \mu=x\mub.
$$
The advantage of the
b-perspective is that the b-quantization, etc., procedures work
without a change, since the space of conormal functions, i.e.\
symbols, is unchanged under blowing up a corner. Moreover, it allows
to capture global phenomena at the Lagrangian, and thus compactness
properties, unlike the usual second microlocal perspective in which
Lagrangianizing errors are treated as residual. In particular, we have
$$
\Psiscb^{\tilde r,\tilde r+l,l}(X)=\Psib^{\tilde r,l}(X).
$$
The algebra $\Psiscb(X)$ combines the features of the previous two
algebras, thus the principal symbol is in $S^{s,r,l}/S^{s-1,r-1,l}$,
does not capture relative compactness, and there is a normal operator,
which when combined with the principal symbol, does capture relative
compactness and thus Fredholm properties. Because of the
aforementioned identification, one can consider the sc-decay part of
the principal symbol to be described by a function on
$[\overline{\Tsc^*_{\pa X}}X;o_{\pa X}]$, up to overall weight factors,
at least if the pseudodifferential operator is to leading order (in
sc-decay) classical.

\begin{figure}[ht]
\begin{center}
\includegraphics[width=120mm]{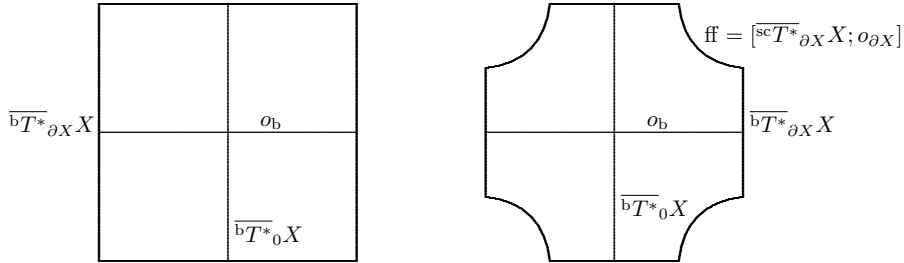}
\end{center}
\caption{The second microlocal space, on the right, obtained by blowing up the corner
  of $\overline{\Tb^*}X$, shown on the left.}
\label{fig:b-2-micro}
\end{figure}

In all these cases one has corresponding Sobolev spaces, namely
$\Hsc^{s,r},\Hb^{\tilde r,l},\Hscb^{s,r,l}$, all of which are subspaces of
tempered distributions $u$, i.e.\ the dual space of $\dCI(X)$, i.e.\ $\CI$ functions
vanishing to infinite order at $\pa X$. Of these, $\Hsc^{s,r}$ is
locally, in the sense of asymptotically conic regions discussed
earlier in this section, the standard weighted Sobolev space on $\RR^n$, $\langle
z\rangle^{-r} H^s(\RR^n)$. Alternatively, for $s\geq 0$ one can simply take an
elliptic element $A$ of $\Psisc^{s,0}(X)$, elliptic in the sense of
the sc-differential order (i.e.\ as $|\zeta|\to\infty$ in the local model), and define the space as
$u\in x^r L^2$, $L^2=L^2_\scl(X)$, for which $Au\in x^r L^2$.
Here the
choice of the elliptic element is irrelevant, and all such elliptic
elements $A$ give rise to equivalent squared norms:
\begin{equation}\label{eq:sc-squared-norm}
\|Au\|^2_{L^2}+\| x^{-r}u\|^2_{L^2}.
\end{equation}
Similarly, in the cylindrical identification discussed earlier, $\Hb^{\tilde r,l}$ is
locally the weighted Sobolev space $e^{(l-n/2)t} H^{\tilde r}(\RR^n)$, where
the distribution should be supported in $(T,\infty)_t\times U$, the
type of region
discussed before. Here the exponent $-nt/2$ enters in our definition so that the $l=0$
space is $L^2(X)=L^2_\scl(X)$: the density is a positive
non-degenerate multiple of
$$
x^{-n-1}\,|dx\,dy|=e^{nt}\,|dt\,dy|.
$$
{\em This is a shift by
  $e^{-nt/2}=x^{n/2}$ relative to the usual convention for b-Sobolev
  spaces, see e.g.\ \cite{Melrose:Atiyah} or
  \cite[Section~5.6]{Vasy:Minicourse}, and is made, as in \cite{Vasy:Zero-energy}, so that the base spaces,
  corresponding to all orders being $0$, are the same $L^2$-space for all Sobolev
  scales we consider.} For $\tilde r\geq 0$ this again amounts to having an elliptic element $A$
of
$\Psib^{\tilde r,0}(X)$, elliptic in the symbolic sense (i.e.\ in the
usual sense for H\"ormander's uniform algebra in the local model
discussed earlier), mapping the distribution $u$ to $x^l L^2$,
with norm
\begin{equation}\label{eq:b-squared-norm}
\|Au\|^2_{L^2}+\| x^{-l}u\|^2_{L^2}.
\end{equation}
The second microlocal spaces are refinements of $\Hb^{\tilde r,l}$;
choosing any $\tilde r\leq\min(s,r-l)$, $\Hscb^{s,r,l}(X)$ is the
subspace of $\Hb^{\tilde r,l}$ for which there exists an elliptic
element $A$ of $\Psiscb^{s,r,0}(X)$, elliptic in the standard symbolic
sense corresponding to the first two orders, for which $Au\in x^l L^2$ with the squared
norm
\begin{equation}\label{eq:scb-squared-norm}
\|x^{-l}Au\|^2_{L^2}+\|u\|^2_{\Hb^{\tilde r,l}},
\end{equation}
and the second term can be replaced by
$\|u\|^2_{\Hb^{0,l}}=\|x^{-l}u\|^2_{L^2}$ if $\tilde r$ can be taken
to be $\geq 0$. We reiterate that we
use the scattering $L^2$ space as the base space for our normalization
of orders in all cases, so when
all their indices are $0$, all these spaces are simply
$L^2=L^2_\scl(X)$.

In the high energy setting we also need the semiclassical version of all these algebras. In
these one adds a parameter $h\in(0,1]$; for fixed $h>0$ the constant
$h$ has no significant effect, so the main point is the uniform
behavior as $h\to 0$. In both the sc- and b-settings, the vector
fields that generate the semiclassical differential operator algebras, $\Diffsch(X)$,
resp.\ $\Diffbh(X)$, over $\CI([0,1]_h\times X)$, are $h$ times the
standard vector fields, i.e.\ $h\Vsc(X)$, resp.\ $h\Vb(X)$. Thus, for
instance, semiclassical scattering differential operators are built
from $hx^2D_x$ and $hxD_{y_j}$ in local coordinates. There are then
semiclassical pseudodifferential algebras in both of these cases. Much
as $\Psisc(X)$ is one of the standard pseudodifferential algebras when
$X=\overline{\RR^n}$, $\Psisch(X)$ is one of the standard
semiclassical pseudodifferential algebras in this case; elements are
semiclassical quantizations
$$
(A_h u)(z,h)=(2\pi h)^{-n}\int e^{i\zeta\cdot (z-z')/h}a(z,\zeta,h)u(z')\,dz'\,d\zeta
$$
of
symbols $a\in \CI([0,1);S^{s,r}(\Tsc^*X))$ on $\Tsc^*X$ of orders $(s,r)$, where $s$ is the differential
and $r$ is the decay order:
$$
|D_h^jD_z^\alpha D_\zeta^\beta a(z,\zeta,h)|\leq
C_{\alpha\beta j}\langle z\rangle^{r-|\alpha|}\langle \zeta\rangle^{s-|\beta|};
$$
here one can simply demand boundedness in $h$ (not in its derivatives)
instead. The phase space is then $\overline{\Tsc^*}X\times[0,1)_h$,
and the principal symbol is understood modulo additional decay in $h$, i.e.\ in
$$
S^{s,r}(\overline{\Tsc^*}X\times[0,1)_h)/hS^{s-1,r-1}(\overline{\Tsc^*}X\times[0,1)_h),
$$
so there is a new, semiclassical, principal symbol, given by the
restriction of $a$ to $h=0$. Since the localization becomes stronger
as $h\to 0$, one can transfer this algebra to manifolds with boundary
just as we did for $\Psisc(X)$. We refer to
\cite{Vasy-Zworski:Semiclassical} for more details, and
\cite{Zworski:Semiclassical} for a general discussion of semiclassical
microlocal analysis.

The b-version is completely similar, locally (using the logarithmic
identification above) based on the
semiclassical quantization of H\"ormander's
uniform algebra, i.e.\ symbols in $\CI([0,1)_h;S_\infty^{\tilde
  r}(\Tb^*X))$:
$$
|D_h^jD_{\tilde z}^\alpha D_{\tilde \zeta}^\beta a(\tilde z,\tilde \zeta,h)|\leq
C_{\alpha\beta j}\langle \tilde\zeta\rangle^{\tilde r-|\beta|},
$$
where $\tilde z=(t,y)=(-\log x,y)$ locally. In particular, principal
symbols are in
$$
S^{\tilde r,l}(\overline{\Tb^*}X\times[0,1)_h)/hS^{\tilde r-1,l}(\overline{\Tb^*}X\times[0,1)_h),
$$
and again there is a normal operator. We refer to
\cite[Appendix~A.3]{Hintz-Vasy:Stability} for more details.

Finally the second microlocalized at the zero section algebra arises, as
before, by blowing up the zero section at $\pa X\times[0,1)_h$ in
$\overline{\Tsc^*}X\times[0,1)_h$, though it is better to consider it
from the b-perspective, blowing up the corner $\pa_2\overline{\Tb^*X}$
of $\Tb^*X$, times $[0,1)_h$, in $\overline{\Tb^*X}\times[0,1)_h$. Here
$[0,1)_h$ is a parameter for both perspectives, namely it is a factor
both in the space within which the blow-up is taking place and in the submanifold being blown up, so the resulting space
is
$$
[\overline{\Tb^*
  X};\pa_2\overline{\Tb^*X}]\times[0,1)_h=[\overline{\Tsc^*}X;o_{\pa
  X}]\times[0,1)_h,
$$
i.e.\ the symbols are smooth functions of $h$ with values in the
non-semiclassical second microlocal space. Since this is a blow up of
the codimension 2 corner of $\overline{\Tb^*X}\times[0,1)_h$ in the
first factor, much as in the
non-semiclassical setting, one can use the usual (now semiclassical)
b-pseudodifferential algebra for quantizations, properties, etc.

The semiclassical Sobolev spaces are the standard Sobolev spaces, but
with an $h$-dependent norm. Thus, on $\RR^n$, these are defined using
the semiclassical Fourier transform
$$
(\cF_h u)(\zeta,h)=(2\pi h)^{-n}\int_{\RR^n} e^{-iz\cdot\zeta/h} u(z,h)\,dz;
$$
so that
$$
\|u\|_{H^s_\semi(\RR^n)}=\|\langle\zeta\rangle^s \cF_h u\|_{L^2(\RR^n)},
$$
while
$$
\|u\|_{H^{s,r}_\semi}=\|\langle z\rangle^{r} u\|_{H^s_\semi},
$$
and then the definition of $\Hsch^{s,r}(X)$ locally reduces to
this. The Sobolev spaces for the other operator algebras are analogous.
Thus, \eqref{eq:sc-squared-norm},
\eqref{eq:b-squared-norm}, \eqref{eq:scb-squared-norm} are replaced by
an equation of the same form but with
$A\in\Psisch^{s,0}(X)$, resp.\ $A\in\Psibh^{\tilde r,0}(X)$, resp.\
$A\in\Psiscbh^{s,r,0}(X)$, elliptic in the relevant symbolic senses.

\section{The operator}\label{sec:operator}
We first define the class of operator families we consider, drawing
comparisons with \cite{Vasy:Zero-energy}, where the $\sigma\to 0$
limit was analyzed in the
unconjugated framework.
Thus, we let $g$ be a scattering metric,
$$
g-g_0\in S^{-1}(X;\Tsc^*X\otimes_s\Tsc^*X),\
g_0=x^{-4}\,dx^2+x^{-2}g_{\pa X},
$$
$g_{\pa X}$ a metric on $\pa X$, so $g$ is asymptotic to a conic metric
$g_0$ on $(0,\infty)\times\pa X$, and we indeed make the assumption
that $g-g_0$ even has a leading term, i.e.\ for some $\delta>0$,
$$
g-g_0\in x\CI (X;\Tsc^*X\otimes_s\Tsc^*X)+S^{-1-\delta}(X;\Tsc^*X\otimes_s\Tsc^*X)
$$
These are stronger requirements than in \cite{Vasy:Zero-energy}, where
$g-g_0\in S^{-\delta}(X;\Tsc^*X\otimes_s\Tsc^*X)$ was allowed,
$\delta>0$, but this is due to our desire to obtain a more precise
conclusion, albeit in a non-zero energy regime. In fact, these
requirements can be relaxed, as only the normal-normal component of
$g-g_0$ actually needs to have such an asymptotic behavior, but we
do not comment on this further.

In \cite{Vasy:Zero-energy}, due to the near zero energy regime being
considered, we worked in a b-framework from the beginning. For
non-zero energies weaker (in terms of the operator algebra), scattering, assumptions are natural, though
we impose stronger asymptotic requirements on these.
Then we consider
\begin{equation*}
P(\sigma)=P(0)+\sigma Q-\sigma^2,\qquad P(0)\in S^{0}\Diffsc^2(X),\ Q\in
S^{-1}\Diffsc^1(X),
\end{equation*}
$P(0)$ elliptic,
$$
P(0)-\Delta_g\in S^{-1}\Diffsc^1(X),
$$
Notice that this means that for real $\sigma$,
$P(\sigma)-P(\sigma)^*\in S^{-1}\Diffsc^1(X)$.

We in fact make the stronger assumption that $P(0)-\Delta_g,Q$ have
leading terms:
\begin{equation}\begin{aligned}\label{eq:PQ-stronger-sc}
P(0)-\Delta_g&\in x\Diffsc^1(X)+S^{-1-\delta}\Diffsc^1(X),\\
Q&\in x\Diffsc^1(X)+S^{-1-\delta}\Diffsc^1(X);
\end{aligned}\end{equation}
thus
$$
\sigma\in\RR\Rightarrow P(\sigma)-P(\sigma)^*\in x\Diffsc^1(X)+S^{-1-\delta}\Diffsc^1(X).
$$

Here in fact $Q$ can have arbitrary smooth dependence on
$\sigma$ if one stays sufficiently close to real values of $\sigma$,
in particular for real $\sigma$. Note that for fixed real $\sigma$, $Q$ can be incorporated
into $P(0)$.
While the restrictions we imposed can be relaxed, in that leading
terms are only required in some particular components, essentially
amounting to the radial set that is moved to the zero section, in this paper
we keep the assumption of this form.
Note that
$$
x^3\Diffb^2(X)+S^{-3-\delta}\Diffb^2(X)\subset x\Diffsc^2(X)+S^{-1-\delta}\Diffsc^2(X)
$$
and
$$
x^2\Diffb^1(X)+S^{-2-\delta}\Diffb^1(X)\subset x\Diffsc^1(X)+S^{-1-\delta}\Diffsc^1(X);
$$
the expressions on the left hand side correspond to the `category' of
operators used in \cite{Vasy:Zero-energy}, in so far as b-spaces are used, although here we have stronger
decay assumptions.

We mention that
\begin{equation}\label{eq:Delta-sc-def}
\Delta_{g_0}=\Delta_{\scl}=x^{n+1} D_x x^{-n-1} x^4D_x+x^2\Delta_{\pa X}
\end{equation}
is the model scattering Laplacian at infinity.

From the Lagrangian perspective we consider a conjugated version of $P(\sigma)$. Thus,
let
$$
\hat P(\sigma)=e^{-i\sigma/x}P(\sigma)e^{i\sigma/x}.
$$
Since conjugation by $e^{i\sigma/x}$ is well-behaved in the
scattering, but not in the b-sense, it is actually advantageous to
first perform the conjugation in the scattering setting, and then
convert the result to a b-form.
At the scattering principal level, the effect of the conjugation is to
replace $\tau$ by $\tau-\sigma$ and leave $\mu$ unchanged,
corresponding to
$$
e^{-i\sigma/x}(x^2D_x)e^{i\sigma/x}=x^2D_x-\sigma,\qquad
e^{-i\sigma/x}(xD_{y_j})e^{i\sigma/x}=xD_{y_j}.
$$
Since the
principal symbol of $P(\sigma)$ in the
scattering decay sense, so at $x=0$, is
$$
p(\sigma)=\tau^2+\mu^2-\sigma^2,
$$
the
principal symbol $\hat p(\sigma)$ of $\hat P(\sigma)$ is
$$
\hat p(\sigma)=x^2(\taub^2+\mub^2)-2\sigma
x\taub=\tau^2+\mu^2-2\sigma\tau.
$$
Moreover, for $\sigma$ real, if $P(\sigma)$ is formally self-adjoint,
so is $\hat P(\sigma)$; in general
$$
\sigma\in\RR\Rightarrow \hat P(\sigma)-\hat P(\sigma)^*\in x\Diffsc^1(X)+S^{-1-\delta}\Diffsc^1(X).
$$

In order to have a bit more precise description, it is helpful to compute $\hat
P(\sigma)$ somewhat more explicitly.

\begin{prop}\label{prop:hat-PQ-form}
We have
\begin{equation}\label{eq:hat-P-form}
\hat P(\sigma)=\hat P(0)+\sigma\hat Q-2\sigma \Big(x^2D_x+i\frac{n-1}{2}x+x\tilde\alpha_+(\sigma)\Big)
\end{equation}
with
\begin{equation*}\begin{aligned}
\hat P(0)&\in x^2\Diffb^2(X)+S^{-2-\delta}\Diffb^2(X),\\
\hat Q&\in x^2\Diffb^1(X)+S^{-2-\delta}\Diffb^1(X),\\
\tilde\alpha_+(\sigma)&\in \CI(X)+S^{-\delta}(X).
\end{aligned}\end{equation*}
\end{prop}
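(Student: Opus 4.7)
The plan is to compute $\hat P(\sigma)$ algebraically by treating the model Laplacian $\Delta_{g_0}$ with the explicit formula already given in the introduction and expanding the contributions from the correction terms as polynomials in $\sigma$ coming from the shift $x^2 D_x \mapsto x^2 D_x - \sigma$.

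First, I would decompose $P(\sigma) = \Delta_{g_0} + R + \sigma Q - \sigma^2$ with $R := P(0) - \Delta_{g_0}$. By the hypothesis $P(0) - \Delta_g \in x\Diffsc^1(X) + S^{-1-\delta}\Diffsc^1(X)$ and the standard computation that a metric perturbation $g - g_0 \in x\CI(X;\Tsc^*X\otimes_s \Tsc^*X) + S^{-1-\delta}$ produces $\Delta_g - \Delta_{g_0} \in x\Diffsc^2(X) + S^{-1-\delta}\Diffsc^2(X)$, one obtains $R \in x\Diffsc^2(X) + S^{-1-\delta}\Diffsc^2(X)$; assumption \eqref{eq:PQ-stronger-sc} gives $Q \in x\Diffsc^1(X) + S^{-1-\delta}\Diffsc^1(X)$.

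The key conjugation identities are $e^{-i\sigma/x}(x^2 D_x)e^{i\sigma/x} = x^2 D_x - \sigma$ and $e^{-i\sigma/x}(xD_{y_j})e^{i\sigma/x} = xD_{y_j}$. Thus for any $A \in \Diffsc^k(X)$ written as a polynomial in these generators, $e^{-i\sigma/x}Ae^{i\sigma/x}$ is a polynomial in $\sigma$ of degree at most $k$, with coefficients computable by formal binomial expansion. For the model, the computation from the introduction gives
$$
e^{-i\sigma/x}(\Delta_{g_0} - \sigma^2)e^{i\sigma/x} = \Delta_{g_0} - 2\sigma\big(x^2 D_x + i\tfrac{n-1}{2}x\big),
$$
the $+\sigma^2$ from the shift cancelling the $-\sigma^2$. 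For the corrections, $e^{-i\sigma/x}Re^{i\sigma/x} = R + \sigma R^{(1)} + \sigma^2 R^{(2)}$ where $R^{(1)} \in x\Diffsc^1(X) + S^{-1-\delta}\Diffsc^1(X)$ and $R^{(2)}$, being (up to sign) the coefficient of $(x^2 D_x)^2$ in $R$, lies in $x\CI(X) + S^{-1-\delta}(X)$; similarly $e^{-i\sigma/x}(\sigma Q)e^{i\sigma/x} = \sigma Q + \sigma^2 Q^{(1)}$ with $Q^{(1)} \in x\CI(X) + S^{-1-\delta}(X)$. Assembling,
$$
\hat P(\sigma) = P(0) + \sigma(R^{(1)} + Q) - 2\sigma\big(x^2 D_x + i\tfrac{n-1}{2}x\big) + \sigma^2(R^{(2)} + Q^{(1)}).
$$

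The identifications $\hat P(0) := P(0)$, $\hat Q := R^{(1)} + Q$, and $\tilde\alpha_+(\sigma) := -\sigma(R^{(2)} + Q^{(1)})/(2x)$ realize \eqref{eq:hat-P-form}, the $\sigma^2$ remainder being exactly $-2\sigma \cdot x\tilde\alpha_+(\sigma)$. The regularity assertions follow from the inclusion $\Diffsc^k(X) \subset x^k\Diffb^k(X)$: this gives $x\Diffsc^j(X) + S^{-1-\delta}\Diffsc^j(X) \subset x^2\Diffb^j(X) + S^{-2-\delta}\Diffb^j(X)$ for $j = 1, 2$, yielding $\hat P(0) \in x^2\Diffb^2(X) + S^{-2-\delta}\Diffb^2(X)$ and $\hat Q \in x^2\Diffb^1(X) + S^{-2-\delta}\Diffb^1(X)$; and $(R^{(2)} + Q^{(1)})/x \in \CI(X) + S^{-\delta}(X)$ since $x^{-1}S^{-1-\delta}(X) \subset S^{-\delta}(X)$. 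The only real challenge is bookkeeping: tracking the $\Diffsc$-to-$\Diffb$ conversion alongside the decay orders and the $\sigma$-polynomial structure, and in particular noting that the $\sigma^2$ remainder arising from the corrections (which has no cancellation partner from the $-\sigma^2$ in $P(\sigma)$) carries a factor of $x$ and can thus be absorbed into $x\tilde\alpha_+(\sigma)$.
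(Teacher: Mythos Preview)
Your algebraic computation of $\hat P(\sigma)$ is correct and the overall strategy---conjugate term by term, expand as a polynomial in $\sigma$, then regroup---is exactly the paper's approach. The gap is in the regularity bookkeeping, and it is a genuine one.

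The inclusion $\Diffsc^k(X)\subset x^k\Diffb^k(X)$ is false: it holds for the top-order generators (since $x^2D_x=x\cdot(xD_x)$ and $xD_{y_j}=x\cdot D_{y_j}$), but an element of $\Diffsc^k$ also has lower-order terms, and a zeroth-order term (multiplication by $c\in\CI(X)$) picks up no factor of $x$ at all. Concretely, in the local coordinate form \eqref{eq:P-0-form}--\eqref{eq:Q-form} the operator $P(0)$ contains the potential term $xa'$ with $a'\in\CI(X)+S^{-\delta}(X)$, and this term lies only in $x\CI(X)+S^{-1-\delta}(X)$, not in $x^2\Diffb^2(X)+S^{-2-\delta}\Diffb^2(X)$. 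Hence your choice $\hat P(0):=P(0)$ does \emph{not} satisfy the stated membership. The same problem occurs for your $\hat Q=R^{(1)}+Q$: it contains the zeroth-order piece $b'x$ coming from $Q$, as well as the term $-a_0 x$ produced by conjugating $a_0 x(x^2D_x)$; both are only $O(x)$, not $O(x^2)$.

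The paper's remedy is exactly to strip these zeroth-order pieces out and absorb them into $\tilde\alpha_+(\sigma)$: one sets $\hat P(0)=P(0)-xa'$ and $\hat Q=Q-b'x-2xa_{00}(x^2D_x)-2\sum_j xa_{0j}(xD_{y_j})$, and then
\[
\tilde\alpha_+(\sigma)=\tfrac{1}{2}\bigl(-a_{00}\sigma+a_0+b_0\sigma-\sigma^{-1}a'-b'\bigr),
\]
which now has $\sigma^{-1}$ and $\sigma^0$ contributions in addition to the $\sigma^1$ contributions your version captured. The presence of the $\sigma^{-1}a'$ term is in fact important later in the paper (it encodes the long-range Coulomb-type effect on the asymptotics), so this is not merely a cosmetic reshuffling.
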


\begin{rem}\label{rem:subprincipal}
A simple computation shows that if we regard $\hat P(\sigma)$ as an
operator on half-densities, using the metric density to identify
functions and half-densities, then the subprincipal symbol of $\hat
P(\sigma)$ {\em at} the sc-zero section (i.e. regarding $\hat
P(\sigma)$ as an element of $\Diffsc^2(X)$) is $-2\sigma
x\tilde\alpha_+(\sigma)$ modulo $S^{-2}+S^{-1}\tau+S^{-1}\cdot\mu$,
with the $S^{-1}$ terms corresponding to the statement holding at the
zero section.
\end{rem}

\begin{proof}
To start with, in local coordinates, we have
\begin{equation}\begin{aligned}\label{eq:P-0-form}
P(0)=&(1+xa_{00})(x^2D_x)^2+\sum_j xa_{0j} ((x^2D_x)
(xD_{y_j})+(xD_{y_j}) (x^2D_x))\\
&+\sum_{i,j} a_{ij}
(xD_{y_i})(xD_{y_j})\\
&+(i(n-1)+ a_0)x(x^2D_x)+\sum_j xa_j (xD_{y_j})+x a',
\end{aligned}\end{equation}
and
\begin{equation}\begin{aligned}\label{eq:Q-form}
Q=b_0 x (x^2D_x)+\sum_j x b_j (xD_{y_j})+b'x,
\end{aligned}\end{equation}
with $a_{00},a_{0j},a_0,a_j,a',b_0,b_j,b'\in \CI(X)+S^{-\delta}(X)$, $a_{ij}\in
\CI(X)+S^{-1-\delta}(X)$, and with $b_0,b_j,b'$ smoothly depending on $\sigma$. Here $i(n-1)$ is
taken out of the $x(x^2 D_x)$ term of $P(0)$ because this way for a
formally selfadjoint operator
$a_0|_{\pa X}$ is real, cf.\
\eqref{eq:Delta-sc-def}; similarly for a
formally selfadjoint operator
$a_j,a',b_0,b_j,b'$ have real restrictions to $\pa X$. (Note that
$a_{ij}$ is real by standard principal symbol considerations, as that
of $P(\sigma)$ is the dual metric function $G$.)

This gives
\begin{equation*}\begin{aligned}
&e^{-i\sigma/x}P(0)e^{i\sigma/x}\\
=&(1+xa_{00})(x^2D_x-\sigma)^2+\sum_j xa_{0j} ((x^2D_x-\sigma)
(xD_{y_j})+(xD_{y_j}) (x^2D_x-\sigma))\\
&+\sum_{i,j} a_{ij}
(xD_{y_i})(xD_{y_j})+(i(n-1)+a_0)x(x^2D_x-\sigma)+\sum_j xa_j (xD_{y_j})+x a',
\end{aligned}\end{equation*}
and
$$
e^{-i\sigma/x}Qe^{i\sigma/x}=b_0 x (x^2D_x-\sigma)+\sum_j xb_j (xD_{y_j})+b'x,
$$
Combining the terms gives
\begin{equation}\label{eq:hat-P-form-b}
\hat P(\sigma)=\hat P(0)+\sigma\hat Q-2\sigma \Big(x^2D_x+i\frac{n-1}{2}x+\frac{1}{2}x\big(-a_{00}\sigma+a_0+b_0\sigma-\sigma^{-1}a'-b'\big)\Big)
\end{equation}
with
\begin{equation}\begin{aligned}\label{eq:hatPQ-full-expression}
\hat P(0)&=P(0)-xa'\in x^2\Diffb^2(X)+S^{-2-\delta}\Diffb^2(X),\\
\hat Q&=Q-b'x-2xa_{00}(x^2D_x)-2\sum_j xa_{0j}(xD_{y_j})\\
&\qquad\qquad\qquad\qquad\in x^2\Diffb^1(X)+S^{-2-\delta}\Diffb^1(X).
\end{aligned}\end{equation}
\end{proof}

Notice that if the coefficients were smooth, rather than merely
symbolic, $\hat P(\sigma)$ would be in $x\Diffb^2(X)$; with the
actual assumptions in general
$$
\hat P(\sigma)\in
x\Diffb^2(X)+S^{-2-\delta}\Diffb^2(X)+S^{-1-\delta}\Diffb^1(X),
$$
with the only term of
\eqref{eq:hat-P-form} that is not in a faster decaying space
being the last one; this is unlike $P(\sigma)$ which
is merely in $\Diffb^2(X)+S^{-1-\delta}\Diffb^2(X)$ due to the $\sigma^2$ term; this one order
decay improvement plays a key role below. Note also the $\sigma^{-1}$
in front of $a'$ in the last parenthetical expression of \eqref{eq:hat-P-form-b}; this
corresponds to the Laplacian with the Coulomb potential having significantly different
low energy behavior than with a short range potential (or no
potential). On the other hand, long range terms in the higher order
terms make no difference even in that case; indeed, the $a_{00}$
contribution even decays as $\sigma\to 0$.

 We also remark that the principal symbol of $\hat P(0)$ vanishes
 quadratically at the scattering zero section, $\tau=0$, $\mu=0$,
 $x=0$, hence the subprincipal symbol makes sense directly there (without
 taking into account contributions from the principal symbol, working
 with half-densities, etc.),
 and this in turn vanishes. (The same is not true for $P(0)$ due to the
 $xa'$ term.)
Since it
will be helpful when considering non-real $\sigma$ below,
we note positivity properties of $\hat P(0)$ and related structural
properties of $\hat Q$.

\begin{lemma}
The operator $\hat P(0)$ is non-negative modulo terms that are either
sub-sub-principal or subprincipal but with vanishing contribution at
the scattering zero section, in the sense that it
has the form
\begin{equation}\label{eq:hat-P-0-nonnegative}
\hat P(0)=\sum_j T_j^*T_j+\sum_j T_j^* T'_j+\sum_j T^\dagger_j T_j+T''
\end{equation}
where $T_j\in x\Diffb^1(X)+S^{-2-\delta}\Diffb^1(X)$, $T'_j,T^\dagger_j\in
x\CI(X)+S^{-1-\delta}(X)$, $T''\in x^2\CI(X)+S^{-2-\delta}(X)$.
Moreover,
\begin{equation}\label{eq:hat-Q-module-form}
\hat Q=\sum_j T_j^* \tilde T'_j+\sum_j \tilde T^\dagger_j T_j+\tilde T''
\end{equation}
with $\tilde T'_j,\tilde T_j^\dagger\in
x\CI(X)+S^{-1-\delta}(X)$, $\tilde T''\in x^2\CI(X)+S^{-2-\delta}(X)$.
\end{lemma}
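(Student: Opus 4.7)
The plan is to work from the explicit local expressions for $\hat P(0)$ and $\hat Q$ obtained in the proof of Proposition~\ref{prop:hat-PQ-form}, especially \eqref{eq:hatPQ-full-expression}, and to extract a sum-of-squares decomposition from the principal second order part of $\hat P(0)$. Let $V_0=x^2D_x$ and $V_j=xD_{y_j}$ denote the scattering vector fields appearing in \eqref{eq:P-0-form}, all of which lie in $x\Diffb^1(X)$. With the coefficients in \eqref{eq:P-0-form} in mind, I would consider the symmetric matrix
\[
M=\begin{pmatrix} 1+xa_{00} & xa_{0k} \\ xa_{0j} & a_{jk}\end{pmatrix},
\]
so that the second order part of $\hat P(0)=P(0)-xa'$ equals $\sum_{i,j} M_{ij} V_i V_j$. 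Since $M|_{\pa X}$ is the dual metric matrix of $g|_{\pa X}$, it is positive definite, and hence so is $M$ in a neighborhood of $\pa X$; combined with $M\in\CI(X)+S^{-1-\delta}(X)$ and smoothness of the matrix square root on positive definite matrices, this yields a factor $L\in\CI(X)+S^{-1-\delta}(X)$ with $L^*L=M$.

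Setting $T_k=\sum_i L_{ki}V_i$ then gives $T_k\in x\Diffb^1(X)+S^{-2-\delta}\Diffb^1(X)$ by the product rule for these symbol classes. A direct rearrangement yields
\[
\sum_k T_k^*T_k=\sum_{i,j}V_i^* M_{ij} V_j=\sum_{i,j} M_{ij} V_iV_j+\sum_j f_j V_j+\sum_j V_j^* g_j,
\]
where the correction functions $f_j,g_j$ arise from $V_i^*-V_i\in x\CI(X)$ and from $V_i$ applied to the symbolic coefficients $M_{ij}$. Since $\Vsc(X)=x\Vb(X)$ and $\Vb(X)$ preserves symbolic orders, such action on $\CI(X)+S^{-1-\delta}(X)$ lands in $x\CI(X)+S^{-2-\delta}(X)\subset x\CI(X)+S^{-1-\delta}(X)$, exactly the class required for $T'_j$ and $T_j^\dagger$. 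The remaining first order terms of \eqref{eq:P-0-form}, namely $(i(n-1)+a_0)xV_0+\sum_j xa_jV_j$, are of the form (coefficient in $x\CI(X)+S^{-1-\delta}(X)$) times $V_j$, so they fit directly into $\sum_j T_j^\dagger T_j$ after re-expressing the $V_j$ as linear combinations of the $T_k$ via $L^{-1}$. The scalar term $xa'$ of $P(0)$ has been subtracted in forming $\hat P(0)$, so the net scalar contribution is zero or, after minor regrouping, lies in $x^2\CI(X)+S^{-2-\delta}(X)$ and is absorbed in $T''$. For $\hat Q$, \eqref{eq:hatPQ-full-expression} gives $\hat Q=x(b_0-2a_{00})V_0+\sum_j x(b_j-2a_{0j})V_j$, whose scalar coefficients lie in $x\CI(X)+S^{-1-\delta}(X)$, so $\hat Q$ directly takes the form $\sum_j \tilde T_j^\dagger T_j$ (with $\tilde T'_j$ and $\tilde T''$ zero) after the same re-expression.

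To globalize, I would patch the local construction with a partition of unity $\{\chi_\alpha^2\}$ subordinate to coordinate charts, so that each $\chi_\alpha T_k^{(\alpha)}$ is globally defined; cross-chart commutators contribute to the same kinds of lower order corrections. The hard part is not conceptual but rather the symbol-class bookkeeping: one must verify that each commutator and adjoint correction indeed picks up the expected factor of $x$, so that $T'_j$ and $T_j^\dagger$ genuinely lie in $x\CI(X)+S^{-1-\delta}(X)$ (rather than merely in $\CI(X)+S^{-\delta}(X)$) and $T''$ in $x^2\CI(X)+S^{-2-\delta}(X)$. This rests on the key observation that a scattering vector field applied to $\CI(X)$, resp.\ to $S^{-1-\delta}(X)$, yields a function in $x\CI(X)$, resp.\ in $S^{-2-\delta}(X)$, by virtue of $\Vsc(X)=x\Vb(X)$ and the fact that elements of $\Vb(X)$ preserve symbolic orders.
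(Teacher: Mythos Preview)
Your argument is correct and follows the same overall strategy as the paper's proof: work in local coordinates with the scattering vector fields $V_0=x^2D_x$, $V_j=xD_{y_j}$, check that adjoint and commutator corrections land in the stated symbol classes, and patch with a partition of unity. The one substantive difference is that you introduce the matrix square root $L=M^{1/2}$ of the dual-metric coefficient matrix and set $T_k=\sum_i L_{ki}V_i$, whereas the paper simply declares $T_j=x^2D_x,\,xD_{y_j}$. Your extra step is actually needed: with the bare coordinate vector fields one has $\sum_j V_j^*V_j$ with principal part $\tau^2+\sum_j\mu_j^2$, which does not match the second order part $\sum_{ij}a_{ij}\mu_i\mu_j+\cdots$ of $\hat P(0)$ unless the boundary metric is Euclidean in the chosen coordinates; the discrepancy is second order and cannot be absorbed into $T_j^*T'_j+T_j^\dagger T_j$ with zeroth order $T'_j,T_j^\dagger$. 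The paper's accompanying remark acknowledges this implicitly by pointing to the cleaner bundle-valued formulation in which $T$ is the exterior derivative and $\Delta_g=d^*d$ supplies the positivity directly; your square-root construction is the scalar-component implementation of exactly that idea. So your version is a more careful execution of the same proof.
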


\begin{rem}
Technically it would be slightly better to replace $T_j$ by a one-form
valued differential operator as that would remove the need of
discussing coordinate charts, and then the form of $\hat P(0)$ would
be immediate from the definition of the Laplacian, with $T_j$ replaced
by the exterior differential or the covariant derivative.
\end{rem}

\begin{proof}
We work in local coordinates, to which we can reduce by taking $T_j$ to be
cutoff versions of what we presently state, with a union taken over
charts. Then we can take the $T_j$ to be $x^2D_x$ and $xD_{y_j}$; then
the adjoints differ from $x^2D_x$, resp.\ $xD_{y_j}$, by elements of
$x\CI(X)+S^{-1-\delta}(X)$, thus the difference can be absorbed into
$T_j'$, $T_j^\dagger$. The
statements then follow from the coordinate form obtained in the proof
of Proposition~\ref{prop:hat-PQ-form}. Note that the removal of the
terms $xa'$ from $P(0)$ and $xb'$ from $Q$ (they being shifted into $\tilde\alpha_+$) is
important in making the membership statements hold.
\end{proof}

In terms of the local coordinate description of $P(\sigma)$ and $\hat
P(\sigma)$, see \eqref{eq:P-0-form} and \eqref{eq:Q-form}, the normal operator
of $\hat P(\sigma)$ in $x\Diffb^2(X)$, which arises by considering the operator
$x^{-1}\hat P(\sigma)$ and freezing the coefficients at
the boundary, is
\begin{equation}\begin{aligned}\label{eq:actual-normal-op-hat}
N(\hat P(\sigma))
&=-2\sigma \Big(x^2D_x+i\frac{n-1}{2}x +\alpha_+ x\Big),\\
&\qquad\qquad \alpha_+=\alpha_+(\sigma)=\tilde\alpha_+(\sigma)|_{\pa
  X}\\
&\qquad\qquad \qquad\qquad =\frac{1}{2}\big(-a_{00}\sigma+a_0+b_0\sigma-\sigma^{-1}a'-b'\big)|_{\pa X};
\end{aligned}\end{equation}
notice that (for $a'=0$) this
degenerates at $\sigma=0$. Invariantly, see
Remark~\ref{rem:subprincipal}, $-2\sigma x\alpha_+(\sigma)$ is the
subprincipal symbol at the sc-zero section, where the quotient is being taken with $S^{-1-\delta}$
in place of $S^{-2}$.
Note that the normal operator is $x$ times the normal vector
field to the boundary plus a smooth function, which, for $\sigma\neq 0$, corresponds to the
asymptotic behavior of the solutions of $\hat P(\sigma)v\in\dCI(X)$
being
$$
x^{(n-1-2i\alpha_+)/2}\CI(\pa X),
$$
modulo faster decaying terms. Here for formally self-adjoint
$P(\sigma)$ when $\sigma$ is real, the $\alpha_+$ term changes
the asymptotics in an oscillatory way (as $\alpha_+$ is real then) but
not the decay rate, but for complex $\sigma$ the decay rate may also
be affected. This corresponds to the asymptotics
$$
e^{i\sigma/x}x^{(n-1-2i\alpha_+)/2}\CI(\pa X)
$$
for solutions of $P(\sigma)u\in\dCI(X)$ for $\sigma\neq 0$.  This
indicates that we can remove the contribution of $\alpha_+$ to
leading decay order by conjugating the operator by $x^{i\alpha_+}$, as
well as have analogous achievements for the $x^{(n-1)/2}$ part of the
asymptotics, but actually that factor is useful for book-keeping when
using the $L^2$, rather than the $L^2_\bl$ inner product, and we do
not remove this here.

Note also that if we instead conjugated by $e^{-i\sigma/x}$, moving
the other radial point to the zero section, we would obtain the normal
operator
\begin{equation}\begin{aligned}\label{eq:other-normal-op-hat}
&2\sigma \Big(x^2D_x+i\frac{n-1}{2}x +\alpha_- x\Big),\\
&\qquad\qquad \alpha_-=\alpha_-(\sigma)=\frac{1}{2}\big(a_{00}\sigma+a_0+b_0\sigma+\sigma^{-1}a'+b'\big)|_{\pa X}.
\end{aligned}\end{equation}
Again invariantly, cf.\ Remark~\ref{rem:subprincipal}, $2\sigma x\alpha_-(\sigma)$ is the
subprincipal symbol at the sc-zero section, where the quotient is being taken with $S^{-1-\delta}$
in place of $S^{-2}$.

Next, we consider the principal symbol behavior at, as well as near, $\pa X$. While (ignoring the
irrelevant $S^{-2-\delta}\Diffb^2(X)+S^{-1-\delta}\Diffb^1(X)$ terms,
which are irrelevant that they do not affect ellipticity near $\pa X$) $\hat P(\sigma)\in
x\Diffb^2(X)$, it is degenerate at the principal symbol level since it
is actually in
$$
x^2\Diffb^2(X)+\sigma
x\Diffb^1(X)\subset\Psib^{2,-2}(X)+\Psib^{1,-1}(X).
$$
Correspondingly, we consider $\hat P(\sigma)$ as an
element of the second microlocalized scattering pseudodifferential
operators, concretely
$$
\hat P(\sigma)\in\Psiscb^{2,0,-1}(X).
$$
Recall that this
space of operators is formally arrived at by blowing up the zero section of the
scattering cotangent bundle at the boundary, but more usefully (in that
quantizations, etc., make sense still) 
by blowing
up the corner of the fiber-compactified b-cotangent bundle); in this
sense the summands $\Psib^{2,-2}(X)$ and $\Psib^{1,-1}(X)$ have the same (sc-decay) order since on the
front face $x$ and $|(\taub,\mub)|^{-1}$ are comparable.

Note that in $\Psiscb^{2,0,-1}(X)$ the operator $\hat P(\sigma)$ is
elliptic in the sc-differential sense, with principal symbol given by
the dual metric function $G$; it is also elliptic in the sc-decay
sense in a neighborhood of the corner corresponding to
sc-fiber-infinity at the boundary, with now the principal symbol being
$G-2\sigma\tau$, $\tau=x\taub$, the sc-fiber
coordinate. Correspondingly, one has elliptic estimates in this
region:
\begin{equation}\label{eq:sc-fiber-infty}
\|B_1 u\|_{\Hscb^{s,r,l}}\leq C(\|B_3\hat P(\sigma)u\|_{\Hscb^{s-2,r,l+1}}+\|u\|_{\Hscb^{-N,-N,-N}}),
\end{equation}
with the third, b-decay, order actually irrelevant, where $B_1,B_3\in\Psiscb^{0,0,0}(X)=\Psib^{0,0}(X)$, $B_1$
microlocalizes to the aforementioned region (i.e.\ has wave front set
there, understood in the strong sense
that we can consider $B_1$ as a scattering ps.d.o., so this imposes
triviality at the b-front face), as does $B_3$, but $B_3$ is elliptic on the wave front set of
$B_1$. Correspondingly, {\em below we always work microlocalized away
  from sc-fiber infinity}, in which region $\Hscb^{s,r,l}$ is the same as
$\Hb^{r-l,l}$, and $\Psiscb^{s,r,l}(X)$ is the same as $\Psib^{r-l,l}(X)$. Thus, if one so wishes, one can use {\em purely} the
b-pseudodifferential and Sobolev space notation.

On the other hand, the principal symbol of $\hat P(\sigma)$ in the scattering decay sense is
$$
\hat p(\sigma)=x^2(\taub^2+\mub^2)-2\sigma
x\taub=\tau^2+\mu^2-2\sigma\tau,
$$
so there is a non-trivial characteristic set, namely where $\hat
p(\sigma)$ vanishes. Notice that if one wants
to consider $\hat P(\sigma)\in\Psiscb^{2,0,-1}(X)$ and its
principal symbol as a function on $[\Tsc^*_{\pa X}X;o_{\pa X}]$, one
should factor out (corresponding to the order $-1$ in the b-decay
sense) the defining function of the front face, i.e.\ (up to
equivalence) $(\tau^2+\mu^2)^{1/2}$; we mostly do not do this explicitly here. (There is an analogous
phenomenon at fiber infinity, but as we discussed already, the operator
is elliptic there, so this is not a region of great interest.)
Concretely we have
\begin{equation}\begin{aligned}\label{eq:symbol-re-im}
\re\hat p(\sigma) &=x^2(\taub^2+\mub^2)-2(\re\sigma)
x\taub=\tau^2+\mu^2-2(\re\sigma)\tau,\\
\im \hat p(\sigma)&=-2(\im\sigma)x\taub=-2(\im\sigma)\tau.
\end{aligned}\end{equation}
For real (non-zero) $\sigma$ thus the characteristic set is the translated sphere
bundle (with a sphere over each base point in $\pa X$),
\begin{equation}\label{eq:real-char-set}
0=\re\hat p(\sigma) =(\tau-\re\sigma)^2+\mu^2-(\re\sigma)^2.
\end{equation}
For non-real complex $\sigma$ this set is intersected with $\tau=0$,
and thus becomes {\em almost} trivial: one concludes that points in
the characteristic set have $\mu=0$, so points of
non-ellipticity are necessarily at the front face $[\Tsc^*_{\pa X}X;o_{\pa
  X}]$. However, to see the behavior there one actually {\em does}
need to rescale by $(\tau^2+\mu^2)^{1/2}$ to obtain
$$
(\tau^2+\mu^2)^{1/2}-2\sigma\frac{\tau}{(\tau^2+\mu^2)^{1/2}},
$$
which does vanish within the front face, $(\tau^2+\mu^2)^{1/2}=0$,
namely at
$\frac{\tau}{(\tau^2+\mu^2)^{1/2}}=0$, but notice that this vanishing
is {\em simple}.

For real $\sigma$, considering $\Tsc^*X$ (rather than its blow-up), the conjugation of the operator is simply
pullback by a
symplectomorphism at the phase space level, and the Hamilton flow has exactly the same
structure as in the unconjugated case, except translated by the
symplectomorphism. Thus, there are two submanifolds of radial points,
one of which is now the zero section, the other is
$\{\tau=2\re\sigma,\ \mu=0\}$, $\tau$ is monotone decreasing along the flow, so for
$\re\sigma>0$, the non-zero section radial set is a source, for
$\re\sigma<0$ it is a sink. For propagation of singularities
estimates at the radial sets, there is a threshold quantity for the
order of the Sobolev spaces, here the scattering decay order, above
which one has microlocal estimates {\em without} a propagation term
(`estimate for free'),
and below which one can propagate estimates into the radial points
from a punctured neighborhood; see
\cite[Section~2.4]{Vasy-Dyatlov:Microlocal-Kerr} in the standard
microlocal context, and \cite[Section~5.4.7]{Vasy:Minicourse} for a
more general discussion that explicitly includes the scattering setting. The relevant quantities are
$$
x^{-1}H_{\hat p(\sigma)}x=\mp\beta_0 x,
$$
with $\beta_0>0$ at the radial points (so $-$ is for sink, the top
line here and thereafter, $+$ for source),
\begin{equation}\label{eq:subprinc-for-rad-pt}
\sigma_{\scl,*,-1}\Big(\frac{\hat P(\sigma)-\hat
  P(\sigma)^*}{2i}\Big)=\pm\beta_0\tilde\beta_\pm x
\end{equation}
at the radial set (where $*$ in the subscript denotes the irrelevant sc-differential order), and then the threshold value is
$$
r_\pm=-\frac{1}{2}-\tilde\beta_\pm.
$$
In our case,
$$
\beta_0=2\,|\re\sigma|,
$$
while \eqref{eq:subprinc-for-rad-pt} at the radial point moved to the
zero section is $-2\re\sigma(\im\alpha_+(\sigma)) x$, giving for $\re\sigma>0$,
when this is a sink, $\tilde\beta_+=-\im\alpha_+(\sigma)$, while for $\re\sigma<0$,
when this is a source, $\tilde\beta_-=-\im\alpha_+(\sigma)$ again, hence in either case we have a threshold regularity
$$
r_0=-\frac{1}{2} +\im\alpha_+(\sigma).
$$
Similarly, for the radial point not moved to the zero section, the
conjugation corresponding to the reversed sign of $\sigma$ would move
it there, and this conjugation gives \eqref{eq:other-normal-op-hat} as
the normal operator, so we have
$$
r_{\neq 0}=-\frac{1}{2} +\im\alpha_-(\sigma).
$$

The resulting estimate, combining propagation estimates from the radial point
outside the sc-zero section and standard propagation estimates, is
\begin{equation}\label{eq:sc-finite-pts-to-0}
\|B_1 u\|_{\Hscb^{s,r,l}}\leq C(\|B_3\hat P(\sigma)u\|_{\Hscb^{s-2,r+1,l+1}}+\|u\|_{\Hscb^{-N,-N,-N}}),
\end{equation}
with the third, b-decay, order actually irrelevant, $r>r_{\neq 0}$,
and where $B_1,B_3\in\Psiscb^{0,0,0}(X)=\Psib^{0,0}(X)$, $B_1$
microlocalizes away from the zero section (again in the strong sense
that we can consider $B_1$ as a scattering ps.d.o., so this imposes
triviality at the b-front face), as does $B_3$, but $B_3$ is elliptic on the wave front set of
$B_1$, and also on all bicharacteristics in the characteristic set of
$\hat P(\sigma)$ emanating from points in $\WFsc'(B_1)$ towards the
non-zero section radial point, including at these radial points. On the other hand, the estimate that propagates estimates from
a neighborhood of the sc-zero section to the other radial set is
\begin{equation}\label{eq:sc-finite-pts-from-0}
\|B_1 u\|_{\Hscb^{s,r,l}}\leq C(\|B_2 u\|_{\Hscb^{s,r,l}}+
\|B_3\hat P(\sigma)u\|_{\Hscb^{s-2,r+1,l+1}}+\|u\|_{\Hscb^{-N,-N,-N}}),
\end{equation}
with the third, b-decay, order again irrelevant, $r<r_{\neq 0}$,
and
where $B_1,B_2,B_3\in\Psiscb^{0,0,0}(X)=\Psib^{0,0}(X)$, $B_1$
microlocalizes away from the zero section in the same sense as above,
$B_2$ is elliptic on an annular region surrounding the zero section,
$B_3$ elliptic on the wave front set of
$B_1$, and also on all bicharacteristics in the characteristic set of
$\hat P(\sigma)$ emanating from points in $\WFsc'(B_1)$ towards
$\WFsc'(B_2)$, including at the radial points outside the zero section.

Moreover, \eqref{eq:real-char-set} shows
that $\tau$ has the same sign as $\re\sigma$ along the characteristic
set, in the extended sense that it is allowed to become zero. In view of
\eqref{eq:symbol-re-im} thus $\im\hat p(\sigma)$ thus has $-\im\sigma$
times the sign of $\re\sigma$. Correspondingly, the standard complex
absorption estimates, see \cite{Vasy:Minicourse} in the present context, allow propagation of estimates forward along the
Hamilton flow when $\im\sigma\geq 0$ (uniformly as $\im\sigma\to 0$),
and backwards when $\im\sigma\leq 0$, which means for both signs of
$\re\sigma$ that we can propagate estimates {\em towards} the zero
section when $\im\sigma\geq 0$, i.e.\ \eqref{eq:sc-finite-pts-to-0} holds then, and {\em away from} the zero section
when $\im\sigma\leq 0$, i.e.\ \eqref{eq:sc-finite-pts-from-0} holds then; these statements (by standard scattering
results) are valid as long as one stays away from the zero section
itself (where we are using the second microlocal pseudodifferential algebra).

\section{Commutator estimates}\label{sec:commutator}
Since from the standard conjugated scattering picture we already know
that the zero section has radial points, the only operator that can give
positivity microlocally in a symbolic commutator computation is the
weight. Here, in the second microlocal setting at the zero section,
this means two different kinds of weights, corresponding to the
sc-decay (thus microlocally b-differential) and the b-decay orders.  Recall that the actual positive commutator estimates utilize
the computation of
\begin{equation}\label{eq:twisted-comm-expr}
i(\hat P(\sigma)^*A-A\hat P(\sigma))=i(\hat P(\sigma)^*-\hat P(\sigma))A+i[\hat P(\sigma),A]
\end{equation}
with $A=A^*$, so for non-formally-self-adjoint $\hat P(\sigma)$ there is a
contribution from the skew-adjoint part
$$
\im \hat P(\sigma)=\frac{1}{2i}(\hat P(\sigma)-\hat P(\sigma)^*)
$$
of
$\hat P(\sigma)$, most relevant for us when $\sigma$ is not real; here the notation `$\im \hat P(\sigma)$' is motivated by the
fact that its principal symbol is actually $\im \hat p(\sigma)$, with
$\hat p(\sigma)$ being the principal symbol of $\hat P(\sigma)$. It is actually
a bit better to rewrite this, with
$$
\re \hat P(\sigma)=\frac{1}{2}(\hat P(\sigma)+\hat P(\sigma)^*)
$$
denoting the
self-adjoint part of $\hat P(\sigma)$, as
\begin{equation}\label{eq:commutator-expr-8}
i(\hat P(\sigma)^*A-A\hat P(\sigma))=(\im \hat P(\sigma) A+A\im \hat P(\sigma))+i[\re \hat P(\sigma),A].
\end{equation}
If
$A\in\Psib^{2\tilde r-1,2l+1}$, $\hat P(\sigma)\in\Psib^{2,-1}(X)$
implies that the second term is a priori in
$\Psib^{2\tilde r,2l}$. However, it is actually in a smaller space
since $\hat P(\sigma)\in \Psib^{2,-2}(X)+\Psib^{1,-1}(X)$, which in terms
    of the second microlocal algebra means that
    $\hat P(\sigma)\in\Psiscb^{2,0,-1}(X)$. Thus, taking $A\in
    \Psib^{2\tilde r-1,2l+1}=\Psiscb^{2\tilde r-1,2(\tilde
    r+l),2l+1}(X)$, the commutator is in fact in
$\Psiscb^{2\tilde r,2(\tilde r+l)-1,2l}(X)$, so the scattering decay
order is $2(\tilde r+l)-1$, and microlocally near the scattering zero
section (where it will be of interest) it is in $\Psib^{2\tilde r-1,2l}(X)$.
Via the usual quadratic form argument this thus estimates
$u$ in $\Hb^{\tilde r-1/2,l}$ in terms of $\hat P(\sigma)u$ in $\Hb^{\tilde
  r-1/2,l+1}$, assuming non-degeneracy. On the other hand, in the first
term we only have $\im \hat P(\sigma)\in \Psib^{1,-1}(X)=\Psiscb^{1,0,-1}(X)$ when $\sigma\notin\RR$, so the
first term is in $\Psiscb^{2\tilde r,2(\tilde r+l),2l}(X)$, so is the
same order, $2l$, in the b-decay sense, but is actually bigger, order
$2(\tilde r+l)$, in the sc-decay sense, which is the usual situation
when one runs positive commutator arguments with non-real principal
symbols, as we will do in the sc-decay sense.

Now, going back to the issue of the zero section consisting of radial points,
we compute the principal symbol of the second term of \eqref{eq:commutator-expr-8} (which is the only
term when $\sigma$ is real and $P(\sigma)$ is formally self-adjoint)
when $A_0 \in\Psib^{2\tilde r-1,2l+1}$ is microlocally the
weight (as mentioned above, only this can give positivity), i.e.
\begin{equation}\label{eq:symb-A0-def}
a=x^{-2l-1}(\taub^2+\mub^2)^{\tilde
  r-1/2},
\end{equation}
is the principal symbol,
so in the second microlocal
algebra, $A_0\in\Psiscb^{2\tilde r-1,2(\tilde r+l),2l+1}$, i.e.\ the scattering decay
order is $2(\tilde r+l)$.

\begin{lemma}\label{lemma:commutator-sc-version}
The principal symbol $H_{\re\hat p(\sigma)}a$ of $i[\re \hat P(\sigma),A_0]$ in $\Psiscb^{2\tilde
  r,2(\tilde r+l)-1,2l}(X)$ is
\begin{equation}\begin{aligned}\label{eq:commutator-sc-version}
&x^{-2l}(\taub^2+\mub^2)^{\tilde
  r-3/2}\Big(4(\re\sigma)\big((l+\tilde r)
\taub^2+(l+1/2)\mub^2\big)-4x(l+\tilde r)\taub(\taub^2+\mub^2)\Big)\\
&=x^{-2(l+\tilde r)+1}(\tau^2+\mu^2)^{\tilde
  r-3/2}\Big(4(\re\sigma)\big((l+\tilde r) \tau^2+(l+1/2)\mu^2\big)-4(l+\tilde r)\tau(\tau^2+\mu^2)\Big).
\end{aligned}\end{equation}
\end{lemma}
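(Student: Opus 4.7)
The proof is a direct computation of the Hamilton vector field $H_{\re \hat p(\sigma)}$ acting on the symbol $a = x^{-2l-1}(\taub^2+\mub^2)^{\tilde r-1/2}$, using the b-coordinate expression for the Hamilton vector field from \eqref{eq:b-Ham-vf}. I would first record the partial derivatives of $\re\hat p(\sigma) = x^2(\taub^2+\mub^2) - 2(\re\sigma)x\taub$: one has $\pa_{\taub}\re\hat p = 2x^2\taub - 2(\re\sigma)x$, $x\pa_x\re\hat p = 2x^2(\taub^2+\mub^2)-2(\re\sigma)x\taub$, $\pa_{(\mub)_j}\re\hat p = 2x^2 h^{ij}(\mub)_i$, and $\pa_{y_j}\re\hat p = x^2(\pa_{y_j}h^{ik})(\mub)_i(\mub)_k$, where $h^{ij}$ are the components of the dual of the boundary metric $g_{\pa X}$. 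Correspondingly, the derivatives of $a$ are $x\pa_x a = -(2l+1)a$, $\pa_{\taub}a = (2\tilde r-1)x^{-2l-1}(\taub^2+\mub^2)^{\tilde r-3/2}\taub$, $\pa_{(\mub)_j}a = (2\tilde r-1)x^{-2l-1}(\taub^2+\mub^2)^{\tilde r-3/2}h^{ij}(\mub)_i$, and $\pa_{y_j}a = \tfrac{1}{2}(2\tilde r-1)x^{-2l-1}(\taub^2+\mub^2)^{\tilde r-3/2}(\pa_{y_j}h^{ik})(\mub)_i(\mub)_k$.

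The plan is then to substitute these into \eqref{eq:b-Ham-vf}. The cleanest observation, which dispatches any worry about the $y$-dependence of the boundary metric, is that the pair of terms involving $\pa_{(\mub)_j}\re\hat p\cdot\pa_{y_j}a$ and $\pa_{y_j}\re\hat p\cdot \pa_{(\mub)_j}a$ are each equal to $(2\tilde r-1)x^{1-2l}(\taub^2+\mub^2)^{\tilde r-3/2}(\pa_{y_j}h^{ik})(\mub)_i(\mub)_k h^{jm}(\mub)_m$ and therefore cancel exactly in the Poisson bracket. (Geometrically, this reflects the fact that $H_{\re\hat p}$ preserves the boundary dual metric function at the level of its action on $(\taub^2+\mub^2)^{\tilde r-1/2}$ modulo the purely radial contribution $\pa_{\taub}\re\hat p \cdot \pa_{\taub}(\taub^2+\mub^2)^{\tilde r-1/2}$.) What survives is
\begin{equation*}
H_{\re\hat p}a = (2x^2\taub-2(\re\sigma)x)(-(2l+1))x^{-2l-1}\rho^{2\tilde r-1} - (2x^2\rho^2-2(\re\sigma)x\taub)(2\tilde r-1)x^{-2l-1}\rho^{2\tilde r-3}\taub,
\end{equation*}
where $\rho^2 = \taub^2+\mub^2$.

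Expanding, grouping the $(\re\sigma)$ terms, and using $(2l+1)\rho^2 + (2\tilde r-1)\taub^2 = 2(l+\tilde r)\taub^2 + (2l+1)\mub^2$, the $x^2\taub\rho^2$ terms combine with coefficient $-(2(2l+1)+2(2\tilde r-1)) = -4(l+\tilde r)$, yielding the first displayed form in \eqref{eq:commutator-sc-version}. The second, sc-coordinate form follows by the substitution $\tau = x\taub$, $\mu = x\mub$, which gives $\rho^{2\tilde r-3} = x^{3-2\tilde r}(\tau^2+\mu^2)^{\tilde r-3/2}$, $(l+\tilde r)\taub^2+(l+1/2)\mub^2 = x^{-2}((l+\tilde r)\tau^2+(l+1/2)\mu^2)$, and $x\taub\rho^2 = x^{-2}\tau(\tau^2+\mu^2)$; multiplying the overall power of $x$ yields $x^{-2(l+\tilde r)+1}$ as claimed. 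There is no genuine obstacle in the argument beyond careful bookkeeping; the one point to highlight is the cancellation of the $y$-derivative terms, which is what allows us to write the final answer purely in terms of $(\taub,\mub,x)$ (equivalently $(\tau,\mu,x)$) without dependence on derivatives of $h^{ij}$.
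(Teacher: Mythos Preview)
Your proof is correct and follows essentially the same computation as the paper's, which writes the Poisson bracket in b-coordinates and arrives at precisely your surviving expression before expanding. The only difference is that you make explicit the cancellation of the $(\mub,y)$-terms coming from the $y$-dependence of $h^{ij}$, which the paper suppresses (implicitly using that both $\re\hat p(\sigma)$ and $a$ depend on $(y,\mub)$ only through the invariant $|\mub|^2_{g_{\pa X}^{-1}}$, so the boundary geodesic-flow part of the Hamilton vector field annihilates $a$).
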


\begin{proof}
It is a bit simpler (and more standard) to
compute Poisson brackets using $\Tb^*X$ rather than $\Tsc^*X$, cf.\
\eqref{eq:b-Ham-vf}, so we
proceed this way, and then we re-express the result in $\Tsc^*X$ afterwards. Since the principal symbol of $\re
\hat P(\sigma)$ is
$$
\re \hat p(\sigma)=x^2(\taub^2+\mub^2)-2x\re\sigma\taub,
$$
we compute
\begin{equation}\begin{aligned}\label{eq:commutator-sc-version-pf-1}
&\{x^2(\taub^2+\mub^2)-2x\re\sigma\taub,x^{-2l-1}(\taub^2+\mub^2)^{\tilde
  r-1/2}\}\\
&\qquad=(2x^2\taub-2x\re\sigma)(-2l-1) x^{-2l-1} (\taub^2+\mub^2)^{\tilde
  r-1/2}\\
&\qquad\qquad-(2x^2(\taub^2+\mub^2) -2x\re\sigma\taub) x^{-2l-1}2(\tilde r-1/2)\taub (\taub^2+\mub^2)^{\tilde
  r-3/2}.
\end{aligned}\end{equation}
Expanding and rearranging, we have
\begin{equation}\begin{aligned}\label{eq:commutator-sc-version-pf-2}
&=4(\re\sigma)(l+1/2) x^{-2l}(\taub^2+\mub^2)^{\tilde
  r-1/2}\\
&\qquad+4(\re\sigma)(\tilde r-1/2)x^{-2l}\taub^2 (\taub^2+\mub^2)^{\tilde
  r-3/2}\\
&\qquad-4(l+1/2)x^{-2l+1}\taub (\taub^2+\mub^2)^{\tilde
  r-1/2}\\
&\qquad-4(\tilde r-1/2)x^{-2l+1}\taub (\taub^2+\mub^2)^{\tilde
  r-1/2}\\
&=x^{-2l}(\taub^2+\mub^2)^{\tilde
  r-3/2}\Big(4(\re\sigma)\big((l+1/2) (\taub^2+\mub^2)+(\tilde
r-1/2)\taub^2\big)\\
&\qquad\qquad\qquad\qquad\qquad\qquad\qquad-4x(l+\tilde r)\taub(\taub^2+\mub^2)\Big)\\
&=x^{-2l}(\taub^2+\mub^2)^{\tilde
  r-3/2}\Big(4(\re\sigma)\big((l+\tilde r) \taub^2+(l+1/2)\mub^2\big)-4x(l+\tilde r)\taub(\taub^2+\mub^2)\Big),
\end{aligned}\end{equation}
giving the left hand side of \eqref{eq:commutator-sc-version} as desired.
Finally, substituting $\tau=x\taub$,
$\mu=x\mub$ yield the right hand side.
\end{proof}

\begin{rem}\label{rem:regularizer-choice}
For future reference, we record the impact of having an additional
regularizer factor, namely replacing $a$ by
$$
a^{(\ep)}=a f_\ep.
$$
The role of this is very much standard in positive
commutator estimates, see \cite[Section~5.4]{Vasy:Minicourse} for a
discussion in a similar form, though is slightly delicate in radial points
estimates as radial points limit the regularizability, see
\cite[Section~5.4.7]{Vasy:Minicourse}, \cite[Proof of Proposition~2.3]{Vasy-Dyatlov:Microlocal-Kerr}, as well as earlier work going
back to \cite{RBMSpec} and including \cite[Theorem~1.4]{Haber-Vasy:Radial}. However, in our
second microlocal setting in fact there is no such limitation
as it is the b-decay order that is microlocally limited (near the
scattering zero section), and we are
{\em not} regularizing in that.

One can take the regularizer of the form
$$
f_\ep(\taub^2+\mub^2),\ f_\ep(s)=(1+\ep s)^{-K/2},
$$
where $K>0$ fixed and $\ep\in[0,1]$, with the interesting
behavior being the $\ep\to 0$ limit. Note that $f_\ep(\taub^2+\mub^2)$
is a symbol of order $-K$ for $\ep>0$, but is only uniformly
bounded in symbols of order $0$, converging to $1$ in symbols of
positive order. Then
$$
sf'_\ep(s)=-\frac{K}{2} \frac{\ep s}{1+\ep s} f_\ep(s),
$$
and $0\leq \frac{\ep s}{1+\ep s}\leq 1$, so in particular
$sf'_\ep(s)/f_\ep(s)$ is bounded. The effect of this is to add an
overall factor of $f_\ep(\taub^2+\mub^2)$ to
\eqref{eq:commutator-sc-version} and
\eqref{eq:commutator-sc-version-pf-1}-\eqref{eq:commutator-sc-version-pf-2}, and replace every occurrence of
$\tilde r$, except those in the exponent, by
\begin{equation}\label{eq:tilde-r-replace}
\tilde r+(\taub^2+\mub^2)
\frac{f'_\ep(\taub^2+\mub^2)}{f_\ep(\taub^2+\mub^2)}=\tilde r-\frac{K}{2}\frac{\ep(\taub^2+\mub^2)}{1+\ep(\taub^2+\mub^2)}.
\end{equation}
\end{rem}

\begin{cor}\label{cor:commutator-sc-version}
The principal symbol of $i[\re \hat P(\sigma),A_0]$ in $\Psiscb^{2\tilde
  r,2(\tilde r+l)-1,2l}(X)$ is a positive elliptic multiple of
$\re\sigma$ in $S^{2\tilde r,2(\tilde r+l)-1,2l}$ on the characteristic
set near the image of the
scattering zero section, i.e.\ the b-face, if $l+1/2>0$, and it is a
negative elliptic multiple there if $l+1/2<0$.
\end{cor}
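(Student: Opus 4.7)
The proof should be a direct reading of the formula computed in Lemma~\ref{lemma:commutator-sc-version}, restricted to the characteristic set near the b-face. My plan is first to identify the characteristic set in that region, then to evaluate the commutator symbol on it, and finally to read off the sign.

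To identify the characteristic set at the b-face, I use that $\hat P(\sigma)\in\Psiscb^{2,0,-1}(X)$ has sc-decay principal symbol $\hat p(\sigma)=x^2(\taub^2+\mub^2)-2\sigma x\taub$; dividing by the b-face defining function $x$ (to match the b-decay order $-1$) and restricting to $x=0$ leaves $-2\sigma\taub$. Thus the characteristic set meets the b-face exactly in $\{\taub=0\}$, and by continuity nearby characteristic points satisfy $\taub=O(x)$ as $x\to 0$. Next I would evaluate the formula
$$
x^{-2l}(\taub^2+\mub^2)^{\tilde r-3/2}\Bigl(4(\re\sigma)\bigl((l+\tilde r)\taub^2+(l+1/2)\mub^2\bigr)-4x(l+\tilde r)\taub(\taub^2+\mub^2)\Bigr)
$$
of Lemma~\ref{lemma:commutator-sc-version} on this set. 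The second summand inside the big parentheses carries an explicit factor of $x$ and so vanishes at the b-face, while the $4(\re\sigma)(l+\tilde r)\taub^2$ piece of the first summand vanishes on the characteristic set at the b-face. The only surviving contribution is $4(\re\sigma)(l+1/2)\mub^2$, yielding at $x=0$, $\taub=0$ the expression
$$
4(l+1/2)\,(\re\sigma)\, x^{-2l}|\mub|^{2\tilde r-1}.
$$
The factor $x^{-2l}|\mub|^{2\tilde r-1}$ is a strictly positive elliptic symbol in $S^{2\tilde r,2(\tilde r+l)-1,2l}$ on a neighborhood of the b-face away from $\mub=0$, so the overall sign equals that of $(l+1/2)\re\sigma$, giving positive ellipticity when $l+1/2>0$ and negative ellipticity when $l+1/2<0$.

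There is no substantive obstacle here; the real work has been done in Lemma~\ref{lemma:commutator-sc-version}, and the corollary amounts to bookkeeping. The one point requiring attention is the correct characterization of the characteristic set near the b-face, which is not immediate in the sc-picture because $\hat p$ vanishes identically on the sc-zero section before the b-face defining function is factored out; but this has already been addressed in the discussion preceding the corollary, yielding the defining equation $-2\sigma\taub=0$ on the b-face. I should also note that the argument implicitly uses $l+1/2\neq 0$ to obtain non-vanishing of the coefficient, which is precisely the hypothesis.
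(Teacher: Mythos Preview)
Your proof is correct and takes essentially the same approach as the paper. Both arguments reduce to showing that on the characteristic set near the b-face only the $(l+1/2)\mub^2$ (equivalently $(l+1/2)\mu^2$) term survives to leading order, so the sign is that of $(l+1/2)\re\sigma$. The paper phrases this in sc-coordinates on the characteristic circle $(\tau-\re\sigma)^2+\mu^2=(\re\sigma)^2$, observing that $\tau$ vanishes quadratically while $\mu$ vanishes simply at the zero section; you phrase it by restricting to the b-face, where the rescaled symbol $x^{-1}\hat p(\sigma)$ becomes $-2\sigma\taub$ and hence $\taub=0$ on the characteristic set there. One small imprecision: your remark that ``nearby characteristic points satisfy $\taub=O(x)$ as $x\to 0$'' is not quite well-posed, since the characteristic set lies on the boundary of the second microlocal space (i.e.\ already at $x=0$); but this aside is not used in your actual evaluation, which proceeds correctly by restricting to the b-face and invoking continuity.
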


\begin{rem}
The sign restrictions on $l+1/2$ are exactly the restrictions on the
decay order at the radial point moved to the zero section in the
scattering perspective, i.e.\ if using standard scattering pseudodifferential
operators, for
formally self-adjoint $\hat P(\sigma)$, as discussed at the end of
Section~\ref{sec:operator} (cf.\ $r_0$ there).

Note also that adding a regularizer factor as in
Remark~\ref{rem:regularizer-choice} leaves the conclusion valid with
$\re\sigma$ replaced by $(\re\sigma) f_\ep(\taub^2+\mub^2)$, and the
ellipticity uniform in $\ep\in[0,1]$, with the point being that any
appearance of $\tilde r$ in \eqref{eq:commutator-sc-version} (apart
from those in the exponent), that is thus
replaced by \eqref{eq:tilde-r-replace}, comes with an additional
vanishing factor at the zero section (via $\tau$ or $\taub$) and thus
is lower order in the b-decay sense.
\end{rem}

\begin{proof}
On the characteristic set of
$\hat P(\sigma)$, where thus $\re \hat p(\sigma)=0$, we have
$$
0=\re \hat p(\sigma)=(\tau-\re\sigma)^2+\mu^2-(\re\sigma)^2,
$$
so $|\tau-\re\sigma|\leq|\re\sigma|$, and thus $\tau$ has the same sign
as $\re\sigma$, but only in an indefinite sense (thus it may
vanish). Restricted to $\re \hat p(\sigma)=0$, $\mu$ has a
simple zero at the zero section while $\tau$ vanishes quadratically
since at $\tau=0,\mu=0$, $dp$ is $-2(\re\sigma)\,d\tau$, i.e.\ $\tau$
is equal to $\re \hat p(\sigma)$ up to quadratic errors (while $d\mu$ is
linearly independent of this). Correspondingly,  on the right hand
side of \eqref{eq:commutator-sc-version}, not only is the second
term of the big parentheses smaller
than the first near the zero section on account of the extra $\tau$
vanishing factor, but even the $\tau^2$ term is negligible compared to
the $\mu^2$ term, provided that the latter has a
non-degenerate coefficient, i.e.\ provided $l+1/2$ does not
vanish. Hence, as long as $l+1/2$
does not vanish, the second term of \eqref{eq:commutator-expr-8}
gives a definite sign near the zero section modulo terms of lower symbolic (i.e.\ sc-decay)
order, though of the same b-decay order (hence non-compact).
\end{proof}

While one could simply (and most naturally) use a microlocalizer to a neighborhood of the
characteristic set in $\Psiscb^{0,0,0}(X)$ via using a cutoff on the second microlocal
space, $[\Tsc^*_{\pa X} X;o_{\pa X}]$, to
obtain a positive commutator, see the discussion below in the
non-real spectral parameter setting after Lemma~\ref{lemma:commutator-sc-version-imag}, one can in fact modify the
commutator (in a somewhat ad hoc manner) by
adding an additional term that gives the correct sign everywhere near the image of the
scattering zero section, i.e.\ the b-face, and we do so here.

\begin{lemma}\label{lemma:s-modified-ellipticity}
Let
$$
\tilde s=(\tilde r-1/2)\taub (\taub^2+\mub^2)^{-1}=(\tilde r-1/2)x\tau
(\tau^2+\mu^2)^{-1}.
$$

Then
$$
H_{\re\hat p(\sigma)}a+2\tilde s a \re\hat p(\sigma)
$$
is a
positive elliptic multiple of $\re\sigma$ in $S^{2\tilde r-1,2(\tilde r+l)-1,2l}$ near the image of the
scattering zero section, i.e.\ the b-face, if $l+1/2>0$, and it is a
negative elliptic multiple there if $l+1/2<0$.

In fact,
\begin{equation}\begin{aligned}\label{eq:modified-commutator-real-princ}
&H_{\re\hat p(\sigma)}a+2\tilde s a \re\hat p(\sigma)\\
&=x^{-2(l+\tilde r)+1}(\tau^2+\mu^2)^{\tilde
  r-1/2} (4(\re\sigma)(l+1/2) -2(2l+\tilde
r+1/2)\tau).
\end{aligned}\end{equation}
\end{lemma}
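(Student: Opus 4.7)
The plan is a direct algebraic verification. The heuristic is that the expression from Lemma~\ref{lemma:commutator-sc-version} for $H_{\re\hat p(\sigma)}a$ already produces the good characteristic-killing term $4(\re\sigma)(l+1/2)\mu^2$ (as $\mu^2$ is a non-degenerate defining function of the characteristic set along the zero section), but it also produces an offending $4(\re\sigma)(l+\tilde r)\tau^2$ with a mismatched coefficient. The multiplier $\tilde s$ is engineered so that $2\tilde s a\,\re\hat p(\sigma)$ precisely renormalizes the $(\re\sigma)\tau^2$ coefficient down from $4(\re\sigma)(l+\tilde r)$ to $4(\re\sigma)(l+1/2)$, matching the $\mu^2$ coefficient and allowing an overall factor of $(\tau^2+\mu^2)$ to be extracted. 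This is the single algebraic gain needed.

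Concretely, using $\taub=\tau/x$, $\mub=\mu/x$ to rewrite \eqref{eq:symb-A0-def} as $a=x^{-2(l+\tilde r)}(\tau^2+\mu^2)^{\tilde r-1/2}$, together with the defining formula for $\tilde s$, gives
$$
2\tilde s a=2(\tilde r-1/2)\,x^{-2(l+\tilde r)+1}\,\tau\,(\tau^2+\mu^2)^{\tilde r-3/2},
$$
so that multiplication by $\re\hat p(\sigma)=\tau^2+\mu^2-2(\re\sigma)\tau$ yields
$$
2\tilde s a\,\re\hat p(\sigma)=2(\tilde r-1/2)\,x^{-2(l+\tilde r)+1}(\tau^2+\mu^2)^{\tilde r-3/2}\bigl(\tau(\tau^2+\mu^2)-2(\re\sigma)\tau^2\bigr).
$$
Adding this to the right-hand side of \eqref{eq:commutator-sc-version} and collecting terms, the $(\re\sigma)\tau^2$ coefficient becomes $4(l+\tilde r)-4(\tilde r-1/2)=4(l+1/2)$, which indeed agrees with the $(\re\sigma)\mu^2$ coefficient, while the $\tau(\tau^2+\mu^2)$ coefficient becomes $-4(l+\tilde r)+2(\tilde r-1/2)=-2(2l+\tilde r+1/2)$. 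Combining the $(\re\sigma)\tau^2$ and $(\re\sigma)\mu^2$ contributions factors out one power of $(\tau^2+\mu^2)$, raising the exponent from $\tilde r-3/2$ to $\tilde r-1/2$ and giving exactly \eqref{eq:modified-commutator-real-princ}.

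For the ellipticity and order claim, the b-face is the lift of the sc-zero section, where $\tau,\mu\to 0$ while $\taub,\mub$ remain bounded; in particular, the bracket $4(\re\sigma)(l+1/2)-2(2l+\tilde r+1/2)\tau$ differs from the constant $4(\re\sigma)(l+1/2)$ by an $O(\tau)$ quantity, which is lower order at the b-face. Since the prefactor $x^{-2(l+\tilde r)+1}(\tau^2+\mu^2)^{\tilde r-1/2}$ is positive and nonvanishing away from the interior of the b-face, the expression is, in a neighborhood of the b-face, an elliptic multiple of $\re\sigma$ of the claimed order $S^{2\tilde r-1,2(\tilde r+l)-1,2l}$; the drop of one order in the sc-differential slot relative to the output of Lemma~\ref{lemma:commutator-sc-version} comes precisely from the extra $(\tau^2+\mu^2)$ factor that has been absorbed. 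The sign is $+\sgn(\re\sigma)$ if $l+1/2>0$ and $-\sgn(\re\sigma)$ if $l+1/2<0$. There is no substantive obstacle here: the entire proof is an algebraic identity, and $\tilde s$ is chosen by design to effect this cancellation; the only care required is tracking the conversion between the b- and sc-fiber coordinates and retaining the factor of $x$ that comes from the sc-version of $\tilde s=(\tilde r-1/2)x\tau(\tau^2+\mu^2)^{-1}$.
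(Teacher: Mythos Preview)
The proposal is correct and follows essentially the same route as the paper's own proof: compute $2\tilde s a$ explicitly, multiply by $\re\hat p(\sigma)=\tau^2+\mu^2-2(\re\sigma)\tau$, add to the expression \eqref{eq:commutator-sc-version} from Lemma~\ref{lemma:commutator-sc-version}, and observe that the $(\re\sigma)\tau^2$ and $(\re\sigma)\mu^2$ coefficients now agree so that a factor of $(\tau^2+\mu^2)$ pulls out. Your presentation in sc-coordinates throughout is slightly more streamlined than the paper's, which displays both the b- and sc-forms of $2\tilde s a$ before combining; the algebra and the concluding ellipticity argument (that $\tau$ is small near the b-face so the constant term $4(\re\sigma)(l+1/2)$ dominates) are identical.
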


\begin{rem}
Due to localization near the scattering zero section added explicitly
in the discussion after the proof, the first,
sc-differentiability, order in $S^{2\tilde r-1,2(\tilde
  r+l)-1,2l}$ is actually irrelevant.

Moreover, the analogue of the conclusion remains valid with a regularizer as in
Remark~\ref{rem:regularizer-choice}, i.e.\ $a$ replaced by
$a^{(\ep)}$, provided in the definition of $\tilde s$ as well as in
the conclusion, $\tilde r$ is
replaced by \eqref{eq:tilde-r-replace} (except in the exponent), and in the conclusion an
overall factor of $f_\ep(\taub^2+\mub^2)$ is added.
\end{rem}

\begin{rem}
As the proof below shows, replacing $\tilde s$ by
$$
\hat s=2(l+\tilde r)(\taub^2+\mub^2)^{-1} \taub=2(l+\tilde r)x(\tau^2+\mu^2)^{-1} \tau
$$
replaces the right hand side of \eqref{eq:modified-commutator-real-princ} by
$$
x^{-2(l+\tilde r)+1}(\tau^2+\mu^2)^{\tilde
  r-3/2} 4(\re\sigma)\big(-(l+\tilde r) \tau^2+(l+1/2)\mu^2\big).
$$
This is manifestly definite with the same sign as $\re\sigma$ if
$l+1/2>0$, $l+\tilde r<0$, and with the opposite sign if $l+1/2<0$,
$l+\tilde r>0$, and the sign requirements for $l+\tilde r$ turn out to
be natural for the global problem, namely these give the signs
required to obtain microlocal estimates at the other radial set. However, the
terms from $\im\hat P(\sigma)$, relevant due to \eqref{eq:commutator-expr-8}, give rise to terms like $-2(2l+\tilde
r+1/2)\tau$ above in \eqref{eq:modified-commutator-real-princ} (including with $\mu_j$ in place of $\tau$), unless
stronger assumptions are imposed on $\im\hat P(\sigma)$, so in the
generality of the present paper this alternative approach is not
particularly fruitful. Nonetheless, the alternative approach becomes very useful in the
companion paper \cite{Vasy:Zero-energy-lag}, where the zero energy
limit is studied and where stronger assumptions are imposed on
$\im\hat P(\sigma)$; it is this perspective that enables us to obtain
uniform estimates as $\sigma\to 0$ in that case.
\end{rem}

\begin{proof}
Adding to \eqref{eq:commutator-sc-version}
\begin{equation*}\begin{aligned}
2\tilde sa&=2 (\tilde r-1/2) x\tau(\tau^2+\mu^2)^{-1} a\\
&=2(\tilde r-1/2)\taub (\taub^2+\mub^2)^{-1} a=2x^{-2l}(\taub^2+\mub^2)^{\tilde
  r-3/2} x^{-1}(\tilde r-1/2)\taub\\
&=2x^{-2(l+\tilde r)+1}(\tau^2+\mu^2)^{\tilde
  r-3/2} (\tilde r-1/2)\tau
\end{aligned}\end{equation*}
times $\re\hat p(\sigma)$, namely
$$
x^2\taub^2-2(\re\sigma)x\taub+x^2\mub^2=\tau^2+\mu^2-2(\re\sigma)\tau,
$$
we obtain
\begin{equation}\begin{aligned}\label{eq:modified-commutator-real-princ-bb}
&x^{-2(l+\tilde r)+1}(\tau^2+\mu^2)^{\tilde
  r-3/2} \Big(4(\re\sigma)\big((l+\tilde r) \tau^2+(l+1/2)\mu^2\big)-4(l+\tilde r)\tau(\tau^2+\mu^2)\\
&\qquad\qquad\qquad\qquad+2 (\tilde r-1/2) \tau (\tau^2+\mu^2)
-4(\re\sigma)(\tilde r-1/2) \tau^2\Big)\\
&=x^{-2(l+\tilde r)+1}(\tau^2+\mu^2)^{\tilde
  r-3/2} \Big(4(\re\sigma)(l+1/2)(\tau^2+\mu^2)\\
&\qquad\qquad\qquad\qquad \qquad\qquad\qquad\qquad-2(2l+\tilde
r+1/2)\tau(\tau^2+\mu^2)\Big)\\
&=x^{-2(l+\tilde r)+1}(\tau^2+\mu^2)^{\tilde
  r-1/2} (4(\re\sigma)(l+1/2) -2(2l+\tilde
r+1/2)\tau).
\end{aligned}\end{equation}
As already mentioned, the factor
$\tau$ is small near the scattering zero section, so $4(\re\sigma) (l+1/2) -2(2l+\tilde
r+1/2)\tau$ has the same (definite) behavior as $4(\re\sigma) (l+1/2)$ nearby, and
thus this whole expression has the same sign as
$-\re\sigma$ if $l+1/2<0$, and the same sign as $\re\sigma$ if $l+1/2>0$.
\end{proof}

Using an additional cutoff factor $\chi$ which is identically $1$ in a neighborhood
of the zero section (where the above computation already gave the
correct sign), one can combine this with standard scattering
estimates by making this factor microlocalize near the zero
section, so depending on the sign of the Hamilton derivative, there may be an error arising from the support of its
differential, but this is controlled from the incoming radial set, see
the discussion around \eqref{eq:sc-finite-pts-to-0}. (An
alternative is
instead making the factor monotone along the Hamilton flow with a
strict sign outside a small neighborhood of the radial sets.) Recall
that as the scattering decay order of this operator is $2l+2\tilde r-1$, the requirement for the incoming radial point
estimate (away from the sc-zero section), for formally self-adjoint
operators (thus ignoring the $\im\hat P(\sigma)$ terms), is $2l+2\tilde
r-1>2(-1/2)=-1$, i.e.\ $\tilde r+l>0$. This means that using such a
cutoff we have microlocal control on the support of $d\chi$ if $\tilde
r+l>0$ (and $l+1/2<0$ as above).

\begin{figure}[ht]
\begin{center}
\includegraphics[height=60mm]{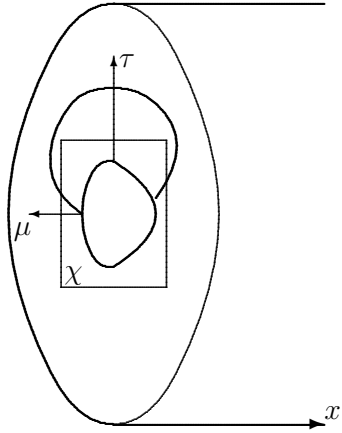}
\end{center}
\caption{The support of $\chi$ on the second microlocal space,
  indicated by the rectangular box. The characteristic set is the
  circular curve tangent to the $\mu$ axis at the b-face, given by
  the sc-zero section.}
\label{fig:sc-2-micro-loc}
\end{figure}

On the other hand, if $l+1/2>0$, then
it is not hard to see that the additional term caused by the
commutator with $\chi$ contributes a term with the same sign as the
weight term, i.e.\ a sign that agrees with that of
$\re\sigma$. Indeed, we can take
$$
\chi=\chi_0(\mu^2)\chi_1(\tau^2),
$$
with $\chi_0$, $\chi_1$ identically $1$ near $0$ of compact support
sufficiently close to $0$ and with $\chi_1$
having relatively large support so that $\supp\chi_0(.)\cap\supp d\chi_1(.)$
is disjoint from the zero set of $\re \hat p(\sigma)$. See Figure~\ref{fig:sc-2-micro-loc}. Thus, elliptic
scattering estimates control the $d\chi_1$ term. On the other hand,
doing the computation in the b-notation,
\begin{equation*}\begin{aligned}
&\{x^2(\taub^2+\mub^2)-2x\re\sigma\taub,\chi_0(x^2\mub^2)\}\\
&=2(2x^2\taub-2x\re\sigma)x^2\mub^2\chi_0'(x^2\mub^2)\\
&=-4x(\re\sigma-\tau)\mu^2\chi_0'(\mu^2),
\end{aligned}\end{equation*}
so if $\chi_1$ is arranged to have sufficiently small support, say in
$[-(\re\sigma)^2/2, (\re\sigma)^2/2]$, then this expression has the
same sign as $\re\sigma$. Arranging that $-\chi_0'$ is a square, this
simply adds another term of the correct sign to our symbolic
computation.

As non-real $\sigma$ complicates the arguments, we first consider real
$\sigma$.
We have:

\begin{lemma}\label{lemma:modified-commutator-real-princ}
Suppose $\sigma$ is real. Let $S=S^*\in\Psiscb^{-1,-1,0}(X)$ have principal
symbol $\tilde s$, $A=A^*\in\Psiscb^{-\infty,2(\tilde r+l),2l+1}(X)$
have principal symbol $\chi a$, and consider
\begin{equation}\begin{aligned}\label{eq:modified-commutator-real}
&i(\hat P(\sigma)^*A-A\hat P(\sigma))+AS\hat P(\sigma)+\hat P(\sigma)^*SA\\
&=\im \hat P(\sigma)A+A\im\hat P(\sigma)+i[\re \hat P(\sigma),A]+AS\re
\hat P(\sigma)+\re\hat P(\sigma) SA\\
&\qquad\qquad\qquad\in\Psiscb^{-\infty,2(\tilde r+l)-1,2l}(X).
\end{aligned}\end{equation}
With the notation of \eqref{eq:P-0-form} and \eqref{eq:Q-form},
the principal symbol of \eqref{eq:modified-commutator-real} is
\begin{equation}\begin{aligned}\label{eq:modified-commutator-real-princ-im-added}
x^{-2(l+\tilde r)+1}(\tau^2+\mu^2)^{\tilde
  r-1/2} \Big(4\sigma&(l+1/2-\im\alpha_+(\sigma)) \\
&-2(2l+\tilde
r+1/2-\im a_0-\sigma \im b_0)\tau\\
&+2\sum_j (\im a_j+\sigma
\im b_j)\mu_j\Big)\chi,
\end{aligned}\end{equation}
modulo terms involving derivatives of $\chi$.
\end{lemma}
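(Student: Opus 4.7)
The plan is to expand \eqref{eq:modified-commutator-real} and compute each of the three self-adjoint summands using the structural results already established, then sum.

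\textbf{Algebraic decomposition.} Writing $\hat P(\sigma) = \re\hat P(\sigma) + i\im\hat P(\sigma)$, a direct expansion using $A = A^*$ and $S = S^*$ verifies the identity in \eqref{eq:modified-commutator-real} modulo a leftover term $E = i(AS\,\im\hat P - \im\hat P\, SA)$. For real $\sigma$, we have $\hat P(\sigma) - \hat P(\sigma)^* \in x\Diffsc^1(X) + S^{-1-\delta}\Diffsc^1(X)$, which places $\im\hat P(\sigma)$ inside $\Psiscb^{1,-1,-2}(X) + \Psiscb^{0,-1,-1}(X)$ and hence $E$ inside $\Psiscb^{-\infty, 2(\tilde r+l)-2, 2l}(X)$; this is strictly below the target sc-decay order of the principal symbol being computed, so $E$ is negligible.

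\textbf{Real-part contribution.} The principal symbol of $i[\re\hat P(\sigma), A] + AS\re\hat P(\sigma) + \re\hat P(\sigma)\,SA$ is $\chi(H_{\re\hat p}a + 2\tilde s a\,\re\hat p) + a\,H_{\re\hat p}\chi$, the last summand being absorbed into the `derivatives of $\chi$' error. Lemma~\ref{lemma:s-modified-ellipticity} then identifies $\chi(H_{\re\hat p}a + 2\tilde s a\,\re\hat p)$ with $\chi\cdot x^{-2(l+\tilde r)+1}(\tau^2+\mu^2)^{\tilde r-1/2}\bigl(4\sigma(l+1/2) - 2(2l+\tilde r+1/2)\tau\bigr)$, accounting for precisely those terms in \eqref{eq:modified-commutator-real-princ-im-added} that are independent of the imaginary parts of the coefficients.

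\textbf{Imaginary-part contribution.} Reality of the scattering dual metric forces $a_{00}, a_{0j}, a_{ij}$ to be real in \eqref{eq:P-0-form}, so the imaginary part of $\hat P(\sigma)$ comes only from the coefficients $a_0, a_j, b_0, b_j$ and the zeroth-order piece $\tilde\alpha_+(\sigma)$ in \eqref{eq:hat-P-form}. The purely imaginary $i(n-1)x(x^2D_x)$ piece in \eqref{eq:P-0-form} was chosen precisely so that $(x^2D_x)^2 + i(n-1)x^3 D_x$ is self-adjoint modulo lower order on $L^2_\scl$: the adjoint correction $(x^2D_x)^* - x^2D_x = i(n-1)x$ gives $((x^2D_x)^2)^* = (x^2D_x)^2 + 2i(n-1)x^3D_x + O(x^2)$, and the $i(n-1)$ coefficient cancels this contribution. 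The remaining contributions give, at the relevant orders,
\begin{equation*}
\im\hat P(\sigma) \equiv (\im a_0 + \sigma\im b_0)x(x^2 D_x) + \sum_j (\im a_j + \sigma\im b_j)x(xD_{y_j}) - 2\sigma x\,\im\tilde\alpha_+(\sigma).
\end{equation*}
Computing $\im\hat P\,A + A\,\im\hat P$ at the principal symbol level, converting to sc-coordinates via $\tau = x\taub$, $\mu = x\mub$, and using $a = x^{-2(l+\tilde r)}(\tau^2+\mu^2)^{\tilde r-1/2}$, yields
\begin{equation*}
\chi \cdot x^{-2(l+\tilde r)+1}(\tau^2+\mu^2)^{\tilde r-1/2}\Bigl[-4\sigma\im\alpha_+(\sigma) + 2(\im a_0 + \sigma\im b_0)\tau + 2\sum_j (\im a_j + \sigma\im b_j)\mu_j\Bigr];
\end{equation*}
summing with the real-part contribution produces \eqref{eq:modified-commutator-real-princ-im-added}.

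\textbf{Main obstacle.} The principal difficulty is the bookkeeping across the three graded orders of $\Psiscb$. One must carefully separate the appearances of $a_0, b_0$ (and $a', b'$) inside $\tilde\alpha_+(\sigma)$ via \eqref{eq:actual-normal-op-hat} from their independent appearances as coefficients of $x(x^2D_x)$ in $\hat P(0)$ and $\hat Q$ via \eqref{eq:hatPQ-full-expression}, as the former produce $\tau$-independent terms while the latter produce $\tau$-linear terms in the bracket; both appear simultaneously, and the statement of the lemma groups them together. Verifying that the $i(n-1)$ correction yields no net contribution to $\im\hat P(\sigma)$ is then a direct, if slightly tedious, computation of the $L^2_\scl$-adjoint of the leading operators in $\hat P(0)$.
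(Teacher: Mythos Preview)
Your proof is correct and follows essentially the same route as the paper's: identify the real-part contribution via Lemma~\ref{lemma:s-modified-ellipticity}, compute the principal symbol of $\im\hat P(\sigma)$ from \eqref{eq:hat-P-form-b}--\eqref{eq:hatPQ-full-expression}, and sum. Your explicit handling of the leftover term $E=i(AS\,\im\hat P-\im\hat P\,SA)\in\Psiscb^{-\infty,2(\tilde r+l)-2,2l}(X)$ is a nice point of care that the paper's proof leaves implicit.
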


\begin{rem}\label{rem:modified-commutator-real-princ}
Equation~\eqref{eq:modified-commutator-real-princ-im-added} shows that
the threshold value $-1/2$ of $l$ is shifted to
$-1/2+\im\alpha_+(\sigma)$. Note that due to the support condition on
$\chi$, the $\tau$ and $\mu_j$ terms in the parentheses can be
absorbed into $4\sigma(l+1/2-\im\alpha_+(\sigma))$ when the latter has
a definite sign, as discussed in the proofs of
Corollary~\ref{cor:commutator-sc-version} and
Lemma~\ref{lemma:s-modified-ellipticity}.

Moreover, the analogue of the conclusion remains valid with a regularizer as in
Remark~\ref{rem:regularizer-choice}, i.e.\ $a$ replaced by
$a^{(\ep)}$, and correspondingly $A$ by $A^{(\ep)}$, provided in the definition of $\tilde s$ as well as in
the conclusion, $\tilde r$ is
replaced by \eqref{eq:tilde-r-replace} (except in the exponent), and in the conclusion an
overall factor of $f_\ep(\taub^2+\mub^2)$ is added.
\end{rem}

\begin{proof}
We observe that \eqref{eq:modified-commutator-real} has principal
symbol given by \eqref{eq:modified-commutator-real-princ} times $\chi$, plus
$2\chi a$ times the principal symbol of $\im\hat P(\sigma)\in\Psiscb^{1,-1,-1}(X)$,
modulo the
term arising from the cutoff. Now, the principal symbol of $\im\hat
P(\sigma)$ is
$$
\im\Big(x^2(a_0+\sigma b_0)\taub+\sum_jx^2(a_j+\sigma
b_j)(\mub)_j-2x\sigma\alpha_+(\sigma)\Big),
$$
as follows from \eqref{eq:hat-P-form-b},
\eqref{eq:hatPQ-full-expression} and
\eqref{eq:PQ-stronger-sc}.
Thus, the principal symbol of \eqref{eq:modified-commutator-real} is
\eqref{eq:modified-commutator-real-princ-im-added} modulo terms
involving derivatives of $\chi$, as desired.
\end{proof}

The below-threshold regularity statement (so the sc-zero section is
the outgoing radial set, corresponding to low decay) is:

\begin{prop}\label{prop:outgoing-symb-est}
Suppose $l+1/2-\im\alpha_+(\sigma)<0$, $\tilde
r+l-\im\alpha_-(\sigma)>0$ and $\sigma$ is real. Then
\begin{equation}\label{eq:symbolic-est-1}
\|u\|_{\Hb^{\tilde r-1/2,l}}\leq C(\|\hat P(\sigma)u\|_{\Hb^{\tilde r-1/2,l+1}}+\|u\|_{\Hb^{-N,l}}).
\end{equation}
This estimate holds in the strong sense that if $u\in \Hb^{\tilde
  r'-1/2,l}$ for some $\tilde r'$ satisfying the inequality above in
place of $\tilde r$ and if
$\hat P(\sigma)u\in \Hb^{\tilde r-1/2,l+1}$ then
$u\in \Hb^{\tilde r-1/2,l}$ and the estimate holds.

Similarly, if $l+1/2-\im\alpha_+(\sigma)<0$, $r=\tilde
r+l-1/2>-1/2+\im\alpha_-(\sigma)$,  and $\sigma$ is real, then
\begin{equation}\label{eq:symbolic-est-scb-1}
\|u\|_{\Hscb^{s,r,l}}\leq C(\|\hat P(\sigma)u\|_{\Hscb^{s-2,r+1,l+1}}+\|u\|_{\Hscb^{-N,-N,l}}).
\end{equation}
Again this holds in the analogous sense that if $\hat P(\sigma) u$ is
in the space on the right hand side and $u\in\Hscb^{s',r',l}$ for some
$s',r'$ satisfying the inequality above with $r'$ in place of $r$, then $u$ is
a member of the space on the left hand side, and the estimate holds.
\end{prop}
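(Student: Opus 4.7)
The strategy is to prove \eqref{eq:symbolic-est-1} by a direct positive commutator argument based on the identity \eqref{eq:modified-commutator-real} of Lemma~\ref{lemma:modified-commutator-real-princ}, and then deduce \eqref{eq:symbolic-est-scb-1} from it using the sc-differential ellipticity of $\hat P(\sigma)$. I take $A=A^*\in\Psiscb^{-\infty,2(\tilde r+l),2l+1}(X)$ with principal symbol $\chi a$, where $a$ is as in \eqref{eq:symb-A0-def} with sign chosen so that $4\sigma(l+1/2-\im\alpha_+(\sigma))a\geq 0$ (possible since $\sigma\neq 0$ is real and $l+1/2-\im\alpha_+(\sigma)\neq 0$), and $\chi=\chi_0(\mu^2)\chi_1(\tau^2)$ is the cutoff described after Lemma~\ref{lemma:s-modified-ellipticity}, localizing to a neighborhood of the sc-zero section with $-\chi_0'$ a square. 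Pairing \eqref{eq:modified-commutator-real} against $u$, the left-hand side is $2\im\langle \hat P(\sigma)u,Au\rangle+2\re\langle \hat P(\sigma)u,S^*Au\rangle$, which is dominated by Cauchy-Schwarz in terms of $\|\hat P(\sigma)u\|_{\Hb^{\tilde r-1/2,l+1}}$ and an $\Hb^{\tilde r-1/2,l}$-norm of a quantization of a symbol comparable to $a^{1/2}$; meanwhile, the main symbol \eqref{eq:modified-commutator-real-princ-im-added} produces a positive microlocal quadratic form in $u$ controlling $\|u\|_{\Hb^{\tilde r-1/2,l}}$ microlocally near the sc-zero section.

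The error terms arising from $d\chi$ require separate treatment. On $\supp d\chi_1$, by the support conditions this lies in the sc-elliptic set, disjoint from the characteristic set \eqref{eq:real-char-set}, so the contribution is controlled by elliptic estimates analogous to \eqref{eq:sc-fiber-infty}. The $d\chi_0$ contribution, as discussed after Lemma~\ref{lemma:s-modified-ellipticity}, automatically has the same sign as the main term because $-\chi_0'$ is a square, and is therefore benign. What remains is the contribution from the transitional region of $\chi$ that sits on the characteristic set but away from the sc-zero section; this is absorbed using the propagation-of-singularities estimate \eqref{eq:sc-finite-pts-to-0} from the non-zero-section (incoming) radial point, together with the above-threshold radial point estimate at that radial point. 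The latter requires precisely that the sc-decay order of our space, namely $\tilde r-1/2+l$ for $\Hb^{\tilde r-1/2,l}$, exceed the threshold $r_{\neq 0}=-1/2+\im\alpha_-(\sigma)$, equivalently $\tilde r+l>\im\alpha_-(\sigma)$, which is our second hypothesis.

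To legitimize the pairing when $u$ is only known a priori to lie in $\Hb^{\tilde r'-1/2,l}$, I insert the regularizer $f_\ep(\taub^2+\mub^2)$ of Remark~\ref{rem:regularizer-choice} into the symbol of $A$, working with $A^{(\ep)}$ in place of $A$. By Remark~\ref{rem:modified-commutator-real-princ}, the positivity of the principal symbol persists uniformly in $\ep\in[0,1]$, because the $\tilde r$-shift \eqref{eq:tilde-r-replace} affects only terms carrying an extra factor of $\tau$ that vanishes at the b-face, and hence are subprincipal in b-decay. A standard limiting argument, cf.\ \cite[Section~5.4.7]{Vasy:Minicourse}, then yields $u\in\Hb^{\tilde r-1/2,l}$ together with \eqref{eq:symbolic-est-1}. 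For \eqref{eq:symbolic-est-scb-1}, I apply \eqref{eq:symbolic-est-1} at $\tilde r=r-l+1/2$, so that $\Hb^{\tilde r-1/2,l}=\Hscb^{r-l,r,l}$, and upgrade the sc-differentiability order freely to arbitrary $s$ via elliptic regularity in the sc-differential sense, which is available since $\hat P(\sigma)$ is elliptic in $\Psiscb^{2,0,-1}(X)$ in the sc-differential order.

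The main difficulty I anticipate is the regularization limit: being below the b-decay threshold at the b-face, one might worry that commuting the regularizer with $\hat P(\sigma)$ produces uncontrolled contributions, as is typical of below-threshold radial-point estimates. However, as emphasized in the first paragraph of Remark~\ref{rem:regularizer-choice}, what is being regularized here is the b-differential order (via $f_\ep(\taub^2+\mub^2)$) rather than the b-decay order in which we are below threshold, so the familiar radial-point obstruction to regularizability is absent, and the $\ep\to 0$ passage can be carried out in the usual fashion.
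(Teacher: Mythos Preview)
Your overall strategy coincides with the paper's, but there is a genuine sign error in your handling of the $d\chi_0$ contribution. You assert that, as in the discussion after Lemma~\ref{lemma:s-modified-ellipticity}, this term ``automatically has the same sign as the main term'' and is therefore benign. But that discussion is explicitly conditioned on $l+1/2>0$ (the above-threshold, incoming case of Proposition~\ref{prop:incoming-symb-est}), not on the below-threshold case you are proving here. For $\sigma>0$ the computation $\{(\re\hat p(\sigma))/\sigma,\chi_0(\mu^2)\}=-4x\sigma^{-1}(\re\sigma-\tau)\mu^2\chi_0'(\mu^2)$ shows that, with $-\chi_0'$ a square and $\chi_1$ supported where $|\tau|$ is small, the $d\chi_0$ contribution carries the sign of $\re\sigma$, whereas the main term $4\sigma(l+1/2-\im\alpha_+(\sigma))$ is \emph{negative} under your hypothesis $l+1/2-\im\alpha_+(\sigma)<0$. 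So the signs are opposite, and this is precisely the error term $E$ (the $+e$ in $-b^2+e$) in the paper's decomposition \eqref{eq:symbolic-est-scb-8a}.

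Consequently your subsequent appeal to a separate ``transitional region of $\chi$ that sits on the characteristic set but away from the sc-zero section'' is spurious: with $\chi=\chi_0(\mu^2)\chi_1(\tau^2)$ one has $d\chi=(d\chi_0)\chi_1+\chi_0(d\chi_1)$, and there is no third region. The fix is simply to recognize that the $d\chi_0$ term itself is the error that must be controlled by the above-threshold incoming radial point estimate \eqref{eq:sc-finite-pts-to-0} at the non-zero-section radial set; this is exactly where the hypothesis $\tilde r+l>\im\alpha_-(\sigma)$ enters. Once you make this correction, your argument aligns with the paper's.
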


\begin{proof}
Consider $\sigma>0$ for definiteness; otherwise the overall sign
switches.

At first we consider sufficiently regular $u$ so that all computations
directly make sense. Concretely, everything below works {\em directly}
if $u\in\Hb^{\tilde r,l}$, resp.\ $\Hscb^{s,r+1/2,l}$, with the loss
of $1/2$ an order in the first, resp.\ second, slot, over the
statement of the proposition arising from having to consider e.g.\
$\hat P(\sigma)^*A$ separately from the commutator. However, a very simple
regularization argument (even simpler than the one discussed below,
i.e.\ it has even less impact on the argument),
see \cite[Proof of Proposition~5.26]{Vasy:Minicourse} as well as \cite[Lemma~3.4]{Haber-Vasy:Radial}, removes this
restriction and allows $u\in\Hb^{\tilde r-1/2,l}$, resp.\
$\Hscb^{s,r,l}$ (though the regularization discussed below completely
removes the need for this, {\em just as} in the low regularity case of the aforementioned references).

The principal symbol \eqref{eq:modified-commutator-real-princ-im-added} of \eqref{eq:modified-commutator-real} can be
written as $-b^2+e$ by Remark~\ref{rem:modified-commutator-real-princ}, with $e$ arising from the Poisson bracket with
the cutoff $\chi$, thus supported on $\supp d\chi$, so we obtain
\begin{equation}\label{eq:symbolic-est-scb-8a}
i(\hat P(\sigma)^*A-A\hat P(\sigma))+AS\hat P(\sigma)+\hat P(\sigma)^*SA=-B^*B+E+F,
\end{equation}
where $B\in\Psiscb^{*,l+\tilde r-1/2,l}$ has principal symbol $b$, $E\in\Psiscb^{*,2(l+\tilde r)-1,l}$ has principal symbol $e$, and
$F\in\Psiscb^{*,2(l+\tilde r)-2,l}$ is lower order in the sc-decay
sense. Applying to $u$ and pairing with $u$ gives
\begin{equation}\label{eq:symbolic-est-scb-16}
\|Bu\|^2\leq 2|\langle P(\sigma)u,Au\rangle|+|\langle Eu,u\rangle|+|\langle Fu,u\rangle|.
\end{equation}
Here the $E$ term is controlled by
the incoming radial point and propagation
estimates (as well as the elliptic estimates, including near sc-fiber
infinity!), see \eqref{eq:sc-finite-pts-from-0}. It is helpful to
write $A=A_1^2$, $A_1=A_1^*$, as arranged by taking $\chi_0,\chi_1$ to
be squares and letting the principal symbol of $a_1$ to be the square
root of that of $A$. Then $b$ is an elliptic multiple of $x^{1/2}a_1$,
so $\|x^{1/2}A_1 u\|^2$ is controlled by $\|Bu\|^2$ modulo terms that
can be absorbed into $|\langle Fu,u\rangle|$.
Thus, modulo terms absorbed into the $F$ term,
$$
\langle
P(\sigma)u,Au\rangle=\langle x^{-1/2}A_1P(\sigma) u,x^{1/2}A_1 u\rangle
$$
is controlled by
$$
\|Bu\|\|x^{-1/2}A_1P(\sigma) u\|\leq\ep \|Bu\|^2+\ep^{-1}\|x^{-1/2}A_1P(\sigma) u\|^2,
$$
and now the first term can be absorbed into the left hand side of
\eqref{eq:symbolic-est-scb-16}. This gives, using the controlled $E$
terms, with elliptic estimates for the scattering differentiability
order, with $r=\tilde r+l-1/2$,
\begin{equation}\label{eq:symbolic-est-scb-1p}
\|u\|_{\Hscb^{s,r,l}}\leq C(\|\hat P(\sigma)u\|_{\Hscb^{s-2,r+1,l+1}}+\|u\|_{\Hscb^{-N,r-1/2,l}}).
\end{equation}
Since $\|u\|_{\Hscb^{-N,r-1/2,l}}$ can be bounded by a small multiple of $\|u\|_{\Hscb^{-N,
    r,l}}$ plus a large multiple of $\|u\|_{\Hscb^{-N,-N,l}}$, with the
former being absorbable into the left hand side, this proves
\eqref{eq:symbolic-est-scb-1}, and thus \eqref{eq:symbolic-est-1} as a
special case, under the additional assumption of membership of $u$ in the space
on the left hand side.

In fact the standard
regularization argument, using the second part of Remark~\ref{rem:modified-commutator-real-princ}, shows that the
estimate \eqref{eq:symbolic-est-scb-1p} holds in the stronger sense that if the right hand side is
finite, so is the left hand side, and iterating the estimate gives
\eqref{eq:symbolic-est-scb-1} and \eqref{eq:symbolic-est-1}.
\end{proof}

On the other hand, for $l+1/2-\im\alpha_+(\sigma)>0$ (so the sc-zero section is the
incoming radial set, corresponding to high decay) we have:

\begin{prop}\label{prop:incoming-symb-est}
Suppose $l+1/2-\im\alpha_+(\sigma)>0$, $\tilde
r+l-\im\alpha_-(\sigma)<0$ and $\sigma$ is real. Then
\begin{equation}\label{eq:symbolic-est-2}
\|u\|_{\Hb^{\tilde r-1/2,l}}\leq C(\|\hat P(\sigma)u\|_{\Hb^{\tilde r-1/2,l+1}}+\|u\|_{\Hb^{-N,l}}).
\end{equation}

Similarly, if $l+1/2-\im\alpha_+(\sigma)>0$, $r=\tilde
r+l-1/2<-1/2+\im\alpha_-(\sigma)$,  and $\sigma$ is real, then
\begin{equation}\label{eq:symbolic-est-scb-2}
\|u\|_{\Hscb^{s,r,l}}\leq C(\|\hat P(\sigma)u\|_{\Hscb^{s-2,r+1,l+1}}+\|u\|_{\Hscb^{-N,-N,l}}).
\end{equation}

These estimates hold in the sense analogous to
Proposition~\ref{prop:outgoing-symb-est}, except there is no need for
$\tilde r'$, resp.\ $r'$ to satisfy any inequalities, since for
sufficiently negative $\tilde r',r'$, as one may always assume in this
context, the inequalities involving these are
automatically satisfied.
\end{prop}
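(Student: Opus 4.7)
The proof mirrors that of Proposition~\ref{prop:outgoing-symb-est} but with the roles of the two radial sets exchanged: now the sc-zero section is \emph{above} the shifted threshold $-1/2+\im\alpha_+(\sigma)$, while the radial set at $\{\tau=2\sigma,\mu=0\}$ is \emph{below} the shifted threshold $-1/2+\im\alpha_-(\sigma)$. The logical flow of estimates is therefore reversed. The plan is:
(i) an above-threshold radial point estimate at the sc-zero section, giving control of $u$ near that set without propagation from elsewhere;
(ii) real principal type propagation in the sc-algebra from a punctured neighborhood of the sc-zero section to a punctured neighborhood of the other radial set (two-sided, as $\sigma$ is real);
(iii) a standard scattering below-threshold radial point estimate at $\{\tau=2\sigma,\mu=0\}$, fed by the control established in (ii);
(iv) sc-elliptic estimates at sc-fiber infinity and off the characteristic set.

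For (i), I would reuse the commutant $A$ and correction $S$ of Lemma~\ref{lemma:modified-commutator-real-princ} with principal symbols $\chi a$, $a=x^{-2l-1}(\taub^2+\mub^2)^{\tilde r-1/2}$, $\chi=\chi_0(\mu^2)\chi_1(\tau^2)$, and $\tilde s$. By \eqref{eq:modified-commutator-real-princ-im-added}, the principal symbol of the modified commutator has dominant term $4\sigma(l+1/2-\im\alpha_+(\sigma))\chi$ which, under the hypothesis $l+1/2-\im\alpha_+(\sigma)>0$, has the \emph{same} sign as $\sigma$ — opposite to the outgoing case. Thus, taking $\sigma>0$ for definiteness, the commutator identity becomes
\begin{equation*}
i(\hat P(\sigma)^*A-A\hat P(\sigma))+AS\hat P(\sigma)+\hat P(\sigma)^*SA=B^*B+E+F,
\end{equation*}
with $B$ elliptic on the characteristic set near the sc-zero section (in the sc-decay/b-differential order), $E$ supported on $\supp d\chi$ (the $d\chi_1$ part off the characteristic set and thus controlled by sc-elliptic estimates, the $d\chi_0$ part controlled by the favorable sign contribution noted after Lemma~\ref{lemma:s-modified-ellipticity} when $l+1/2>0$, or absorbed via the regularization below), and $F$ of lower sc-decay order.

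Since the sign of $B^*B$ is of the above-threshold type, pairing against an arbitrary $u$ requires regularization. I would insert the regularizer $f_\ep(\taub^2+\mub^2)=(1+\ep(\taub^2+\mub^2))^{-K/2}$ of Remark~\ref{rem:regularizer-choice} into the symbol of $A$, with $K$ large, producing $A^{(\ep)}$ carrying $K$ additional orders of b-differential decay for $\ep>0$; all pairings are then meaningful for $u\in\Hscb^{s',r',l}$ with $s',r'$ sufficiently negative, which one may always assume. The regularization inserts an overall factor $f_\ep$ into the symbol and replaces each non-exponent occurrence of $\tilde r$ by \eqref{eq:tilde-r-replace}, but crucially the dominant term $4\sigma(l+1/2-\im\alpha_+(\sigma))\chi f_\ep$ is independent of $\tilde r$, so the sign and the ellipticity of $B^{(\ep)}$ on the characteristic set near the sc-zero section are uniform in $\ep\in[0,1]$. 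Writing $A^{(\ep)}=(A_1^{(\ep)})^2$, pairing the regularized identity with $u$, using Cauchy--Schwarz on the $\langle\hat P(\sigma)u,A^{(\ep)}u\rangle$ term to absorb $\ep'\|B^{(\ep)}u\|^2$ into the left-hand side, and passing $\ep\to 0$ produces microlocal control of $u$ in $\Hscb^{s,r,l}$ near the sc-zero section. This is the main technical step; the reason it succeeds is the second microlocal design emphasised in Remark~\ref{rem:regularizer-choice}: the radial-point obstruction to regularization concerns the b-decay order, which is not being regularized here, so the $\ep\to 0$ limit is unobstructed.

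For (iii) and (iv), real principal type propagation of singularities in the scattering pseudodifferential algebra carries the microlocal control from a punctured neighborhood of the sc-zero section along bicharacteristics of $\re\hat p(\sigma)$ to a punctured neighborhood of $\{\tau=2\sigma,\mu=0\}$ (the direction being available since $\sigma\in\RR$). At that radial set, as computed at the end of Section~\ref{sec:operator} via \eqref{eq:other-normal-op-hat}, the threshold is $r_{\neq 0}=-1/2+\im\alpha_-(\sigma)$, so the hypothesis $r<-1/2+\im\alpha_-(\sigma)$ is precisely the below-threshold condition, and the standard scattering below-threshold radial point estimate propagates the control into the radial set itself. Combined with the sc-elliptic estimate \eqref{eq:sc-fiber-infty} at sc-fiber infinity and with standard sc-elliptic estimates off the characteristic set, one obtains \eqref{eq:symbolic-est-scb-2} and, as a special case, \eqref{eq:symbolic-est-2}. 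The absence of any required a priori inequality on $\tilde r'$ or $r'$ follows since the hypotheses on $\tilde r$ and $r$ are upper bounds, automatically satisfied on passing to sufficiently negative auxiliary orders.
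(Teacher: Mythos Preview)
Your proposal is correct and follows essentially the same approach as the paper. The paper's proof is terser: it observes that when $l+1/2-\im\alpha_+(\sigma)>0$ the $d\chi_0$ cutoff contribution now has the \emph{same} sign as the main weight term (the point you cite from the discussion after Lemma~\ref{lemma:s-modified-ellipticity}), so rather than keeping an $E$-term it writes the commutator identity directly as $B^*B+B_1^*B_1+F$ with both squares of the favorable sign; it then combines with the below-threshold propagation estimate \eqref{eq:sc-finite-pts-from-0} at the other radial set, exactly as in your steps (ii)--(iii). Your more explicit treatment of the regularization---in particular the observation that the dominant term $4\sigma(l+1/2-\im\alpha_+(\sigma))$ is independent of $\tilde r$, so replacing $\tilde r$ by \eqref{eq:tilde-r-replace} leaves the sign and ellipticity uniform in $\ep$---is precisely the content of the Remark following the proposition.
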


\begin{rem}
Note that in this proposition there is no limit to background decay,
as represented by $\tilde r',r'$,
thus technically to regularizability,
unlike what happens in standard radial point estimates, see
\cite{RBMSpec,Vasy-Dyatlov:Microlocal-Kerr,Haber-Vasy:Radial,Vasy:Minicourse}. The
reason is that the analogue of the limitation of regularizability in
the standard setting is the b-decay order, which we here fix, thus we
do {\em not} improve it over a priori expectations.
\end{rem}

\begin{proof}
In this case the cutoff term $H_{\re\hat p(\sigma)}\chi$ is also
principally positive (for $\sigma>0$, otherwise there is an overall
sign switch, though that has no impact on the argument), so
the principal symbol of \eqref{eq:modified-commutator-real} is
$b^2+b_1^2$, and we obtain that \eqref{eq:symbolic-est-scb-8a} is
replaced by
$$
i(\hat P(\sigma)^*A-A\hat P(\sigma))+AS\hat P(\sigma)+\hat P(\sigma)^*SA=B^*B+B_1^*B_1+F,
$$
where $B,B_1\in\Psiscb^{*,l+\tilde r-1/2,l}$ have principal symbol $b,b_1$, and
$F\in\Psiscb^{*,2(l+\tilde r)-2,l}$ is lower order in the sc-decay
sense.
Combining this with the outgoing radial point and propagation
estimates (as well as the elliptic estimates) as in
\eqref{eq:sc-finite-pts-to-0}, we can proceed as in the proof of
Proposition~\ref{prop:outgoing-symb-est} to conclude \eqref{eq:symbolic-est-2}
as well as \eqref{eq:symbolic-est-scb-2}.
\end{proof}

Now, the last term of \eqref{eq:symbolic-est-1} and of \eqref{eq:symbolic-est-2} can be estimated using the normal operator
$$
N(\hat P(\sigma)) =-2\sigma \Big(x^2D_x+i\frac{n-1}{2}x
+\alpha_+(\sigma) x\Big),
$$
noting that
$$
x^{-1}N(\hat P(\sigma)) =-2\sigma \Big(xD_x+i\frac{n-1}{2}
+\alpha_+(\sigma) \Big)
$$
should be considered as an operator from $\Hb^{-N,l}$ to
$\Hb^{-N-1,l}$.

Concretely we have:

\begin{lemma}\label{lemma:simple-normal}
For $l<-1/2+\im\alpha_+(\sigma)$, we have
$$
\|v\|_{\Hb^{-N,l}}\leq C\|N(\hat P(\sigma)) v\|_{\Hb^{-N,l+1}}
$$
whenever $v\in \Hb^{-N,l}$ and $N(\hat P(\sigma)) v\in \Hb^{-N,l+1}$.

The same estimate also holds for $l>-1/2+\im\alpha_+(\sigma)$.
\end{lemma}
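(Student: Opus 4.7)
The plan is to exploit that $N(\hat P(\sigma))$ is dilation-invariant in $x$ and to diagonalize it via the Mellin transform. First, using the commutator identity $x(xD_x + c) = (xD_x + i + c)x$, I would rewrite
\[
N(\hat P(\sigma)) \;=\; -2\sigma\bigl(xD_x + i\tfrac{n+1}{2} + \alpha_+(\sigma)\bigr)\, x.
\]
Setting $u = xv$, which under the paper's weight convention gives an isomorphism $\Hb^{-N,l}\to\Hb^{-N,l+1}$, the desired estimate reduces to proving $\|u\|_{\Hb^{-N,l+1}} \le C\|M'u\|_{\Hb^{-N,l+1}}$, where $M' := -2\sigma(xD_x + i(n+1)/2 + \alpha_+(\sigma))$ is a first-order translation-invariant operator in $t = -\log x$ (smoothly parametrized by $y\in\pa X$).

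Under the Mellin transform in $x$, $M'$ acts as multiplication by the symbol
\[
p(\lambda) \;=\; -2\sigma\bigl(\lambda + i\tfrac{n+1}{2} + \alpha_+(\sigma)\bigr),
\]
and the weighted space $\Hb^{-N,l+1}$ corresponds to $L^2$ on the contour $\im\lambda = -l - n/2 - 1$ with Sobolev weight $\langle\lambda\rangle^{-2N}$. Parameterizing $\lambda = \xi + i(-l-n/2-1)$, $\xi\in\RR$, the imaginary part of $\lambda + i(n+1)/2 + \alpha_+(\sigma)$ reduces to the real constant $-l - 1/2 + \im\alpha_+(\sigma)$, which is nonzero by the hypothesis. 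Hence $|p(\lambda)| \ge 2|\sigma|\,|l + 1/2 - \im\alpha_+(\sigma)|$, and since $\pa X$ is compact and $\alpha_+(\sigma)$ is continuous in $y$, this yields a uniform lower bound $|p(\lambda)|\ge\delta>0$ in both $\xi$ and $y\in\pa X$.

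The estimate now follows from Plancherel: writing $\tilde u(\lambda) = \tilde f(\lambda)/p(\lambda)$ with $f = M'u$, the pointwise bound $|p|\ge\delta$ immediately gives $\|u\|_{\Hb^{-N,l+1}} \le \delta^{-1}\|f\|_{\Hb^{-N,l+1}}$, and translating back through $u = xv$ yields the lemma. The case $l > -1/2 + \im\alpha_+(\sigma)$ is handled identically since only the magnitude of the imaginary part of $p$ enters the estimate. The only real subtlety is the bookkeeping that matches weighted $\Hb$-norms with Mellin contours in the paper's convention; once that is in hand the conclusion is a one-line Plancherel bound.
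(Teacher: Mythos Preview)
Your proposal is correct and follows essentially the same Mellin-transform strategy as the paper: reduce to a multiplication operator whose symbol has nonzero imaginary part $-(l+1/2)+\im\alpha_+(\sigma)$ on the relevant line, then invoke Plancherel. The only cosmetic difference is that you factor $x$ on the right of $N(\hat P(\sigma))$ and substitute $u=xv$ to work directly on the shifted contour $\im\lambda=-l-n/2-1$, whereas the paper factors $x$ on the left and conjugates by $x^{n/2+l}$ to reduce to the unweighted $L^2_\bl$ line before Mellin transforming; these are equivalent bookkeeping moves leading to the same non-vanishing condition.
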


\begin{rem}
Here $\im\alpha_+(\sigma)$ is a function on $\pa X$, and the
inequalities $l<-1/2+\im\alpha_+(\sigma)$, resp.\
$l>-1/2+\im\alpha_+(\sigma)$, need to hold at each point of $\pa X$.
\end{rem}

\begin{rem}\label{rem:decay-improve}
It is straightforward to formalize and prove, via a contour shifting
argument on the Mellin transform side, a version of this lemma that
assumes that $v$ is supported in $x\leq 1$, say (as relevant below in
the setting of Proposition~\ref{prop:real-sigma-Fredholm}), and that
$v\in\Hb^{-N,l'}$ only for some $l'<l$, with $l'$ satisfying the same
inequality as $l$, and concludes that
$v\in\Hb^{-N,l}$. However, we do not need this in the present paper,
and in any case one can run such an argument as an a posteriori
`regularity' (here meaning b-decay) argument.
\end{rem}

\begin{proof}
For our immediate purposes
it is more convenient to work with $L^2_{\bl}$, so we set $\Hbb$
to be the b-Sobolev space relative to $L^2_{\bl}$, here this really is
of interest in $[0,\infty)\times\pa X$, with density
$\frac{dx}{x}\,dg_{\pa X}$.
Since the quadratic form on $L^2_\bl$ is $\langle
x^{n}\cdot,\cdot\rangle_{g_0}$, $L^2_\bl=x^{-n/2}L^2$, so $x^{-1}N(\hat
P(\sigma))$ mapping from $\Hb^{-N,l}$ to $\Hb^{-N-1,l}$ amounts to
$x^{-n/2}x^{-1}N(\hat P(\sigma))x^{n/2}$ being considered from
$\Hbb^{-N,l}$ to $\Hbb^{-N-1,l}$, or $x^{-n/2-l}x^{-1}N(\hat
P(\sigma))x^{n/2+l}$ from
$\Hbb^{-N,0}$ to $\Hbb^{-N-1,0}$. But this is
$$
-2\sigma \Big(xD_x-i(l+1/2)
+\alpha_+(\sigma) \Big),
$$
which on the Mellin transform side is multiplication by
\begin{equation}\label{eq:MT-mult-op}
-2\sigma \Big(\taub-i(l+1/2)
+\alpha_+(\sigma)\Big),
\end{equation}
which is invertible for real $\alpha_+$ if $l+1/2\neq 0$, and in general
if $l+1/2\neq\im\alpha_+(\sigma)$. The differential order is not
an issue: the Mellin transform, with image restricted to the real line, is an isomorphism from the Sobolev spaces $\Hbb^{s',0}$ on $[0,\infty)\times\pa X$
and the large parameter, in $\taub$, i.e.\ semiclassical in the
reciprocal $\langle\taub\rangle^{-1}$, Sobolev spaces
$H^{s'}_{\langle\taub\rangle^{-1}}$, see \cite{Melrose:Atiyah} around
equation (5.41) and \cite{Vasy-Dyatlov:Microlocal-Kerr} around equation
(3.8). Since the multiplication operator \eqref{eq:MT-mult-op}, which
is multiplication by a constant for each fixed $\taub$, has a bounded inverse on these
spaces when $l+1/2\neq \im\alpha_+(\sigma)$, the conclusion follows.
\end{proof}

\begin{prop}\label{prop:real-sigma-Fredholm}
Suppose $l+1/2-\im\alpha_+(\sigma)<0$, $\tilde
r+l-\im\alpha_-(\sigma)>0$ and $\sigma\neq 0$ is real. Then
$$
\|u\|_{\Hb^{\tilde r-1/2,l}}\leq C(\|\hat P(\sigma)u\|_{\Hb^{\tilde r-1/2,l+1}}+\|u\|_{\Hb^{-N,l-\delta}}).
$$
This estimate holds in the sense that if $u\in \Hb^{\tilde
  r'-1/2,l}$ for some $\tilde r'$ satisfying the inequality above in
place of $\tilde r$ and if
$\hat P(\sigma)u\in \Hb^{\tilde r-1/2,l+1}$ then
$u\in \Hb^{\tilde r-1/2,l}$ and the estimate holds.

Similarly, if $l+1/2-\im\alpha_+(\sigma)<0$, $r=\tilde
r+l-1/2>-1/2+\im\alpha_-(\sigma)$,  and $\sigma$ is real, then
$$
\|u\|_{\Hscb^{s,r,l}}\leq C(\|\hat P(\sigma)u\|_{\Hscb^{s-2,r+1,l+1}}+\|u\|_{\Hscb^{-N,-N,l-\delta}}).
$$
Again this holds in the analogous sense that if $\hat P(\sigma) u$ is
in the space on the right hand side and $u\in\Hscb^{s',r',l}$ for some
$s',r'$ satisfying the inequality above with $r'$ in place of $r$, then $u$ is
a member of the space on the left hand side, and the estimate holds.

The analogous conclusions also hold if $l+1/2-\im\alpha_+(\sigma)>0$, $\tilde
r+l-\im\alpha_-(\sigma)<0$, $r=\tilde
r+l-1/2<-1/2+\im\alpha_-(\sigma)$, except that $\tilde r',r'$ do not
need to satisfy any inequalities, cf.\ Proposition~\ref{prop:incoming-symb-est}.
\end{prop}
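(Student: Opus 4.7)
The strategy is to combine the symbolic estimate of Proposition~\ref{prop:outgoing-symb-est} (or Proposition~\ref{prop:incoming-symb-est} in the reversed-sign case) with the normal operator inversion of Lemma~\ref{lemma:simple-normal} in order to upgrade the error term $\|u\|_{\Hb^{-N,l}}$ to one with $\delta$ more b-decay. The structural input that makes this work is that, under the hypothesis \eqref{eq:PQ-stronger-sc}, the remainder $\hat P(\sigma)-N(\hat P(\sigma))$ gains exactly $\delta$ orders of b-decay over $N(\hat P(\sigma))$: indeed, since $\hat P(0)\in x^2\Diffb^2+S^{-2-\delta}\Diffb^2$ and $\sigma\hat Q\in x^2\Diffb^1+S^{-2-\delta}\Diffb^1$ by Proposition~\ref{prop:hat-PQ-form}, while the subprincipal correction $-2\sigma x(\tilde\alpha_+(\sigma)-\alpha_+(\sigma))$ lies in $x^2\CI+S^{-1-\delta}$ by definition of $\alpha_+(\sigma)$ as the restriction of $\tilde\alpha_+(\sigma)$ to $\pa X$, the full remainder sits in $x^{1+\delta}\Diffb^2(X)+x^2\Diffb^2(X)$, whereas $N(\hat P(\sigma))$ itself is only in $x\Diffb^2(X)$. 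This $\delta$-gain matches the $l-\delta$ loss in the stated error term.

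Concretely, I would fix a cutoff $\phi\in\CI(X)$ with $\phi\equiv 1$ near $\pa X$ and supported in a collar neighborhood of $\pa X$. Applying Lemma~\ref{lemma:simple-normal} to $\phi u$ (legitimate since $l+1/2\neq\im\alpha_+(\sigma)$ at every point of $\pa X$ by hypothesis), and rewriting
$$
N(\hat P(\sigma))(\phi u)=\phi\,\hat P(\sigma)u+[\hat P(\sigma),\phi]u-\big(\hat P(\sigma)-N(\hat P(\sigma))\big)(\phi u),
$$
I would estimate the three terms on the right as follows. The first is dominated by $C\|\hat P(\sigma)u\|_{\Hb^{\tilde r-1/2,l+1}}$ once $N$ is chosen large enough that $-N\leq\tilde r-1/2$. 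The commutator $[\hat P(\sigma),\phi]$ has coefficients supported in $\supp d\phi\subset X^\circ$, so its action on $u$ is a compactly supported distribution on which every b-weight is equivalent, yielding an estimate $C\|u\|_{\Hb^{-N+1,l-\delta}}$. The third term is controlled, via the structural observation of the first paragraph, by $C\|\phi u\|_{\Hb^{-N+2,l-\delta}}$. Adding the trivial bound $\|(1-\phi)u\|_{\Hb^{-N,l}}\leq C\|u\|_{\Hb^{-N,l-\delta}}$ for the compactly supported complementary piece, one obtains
$$
\|u\|_{\Hb^{-N,l}}\leq C\big(\|\hat P(\sigma)u\|_{\Hb^{\tilde r-1/2,l+1}}+\|u\|_{\Hb^{-N+2,l-\delta}}\big).
$$
Substituting into Proposition~\ref{prop:outgoing-symb-est} and relabelling $N$ (taken arbitrarily large) yields the desired estimate; the regularity clause (starting from $u\in\Hb^{\tilde r'-1/2,l}$) is inherited from the analogous clause of Proposition~\ref{prop:outgoing-symb-est}.

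For the $\Hscb$ version, the same argument runs with $\Hb$ replaced by $\Hscb$ throughout, using that microlocally away from sc-fiber infinity $\Hscb^{s,r,l}$ agrees with $\Hb^{r-l,l}$, so the b-normal operator still applies verbatim; sc-fiber infinity itself is handled by the sc-elliptic estimate \eqref{eq:sc-fiber-infty}, which also accounts for the freedom in the first, sc-differentiability, order $s$. The reversed-sign case uses Proposition~\ref{prop:incoming-symb-est} in place of Proposition~\ref{prop:outgoing-symb-est}, and since Lemma~\ref{lemma:simple-normal} is valid for both signs of $l+1/2-\im\alpha_+(\sigma)$, the remainder of the argument is unchanged; in this regime no lower bound on $\tilde r'$ or $r'$ is required, consistent with the hypothesis. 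The main obstacle is really just the structural bookkeeping in the first paragraph: once one verifies that the hypothesis \eqref{eq:PQ-stronger-sc} gives exactly the $\delta$ b-decay gain in $\hat P(\sigma)-N(\hat P(\sigma))$, the rest is a standard combination of symbolic positive-commutator estimates with normal-operator inversion, in the spirit of Melrose's b-calculus Fredholm theory.
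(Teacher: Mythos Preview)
Your proposal is correct and follows essentially the same approach as the paper's proof: cut off near $\pa X$, apply Lemma~\ref{lemma:simple-normal}, use that $\hat P(\sigma)-N(\hat P(\sigma))$ gains $\delta$ of b-decay (the paper records this as $\hat P(\sigma)-N(\hat P(\sigma))\in\Psib^{2,-1-\delta}(X)$, which is the correct space rather than your $x^{1+\delta}\Diffb^2$, since the coefficients are only conormal; the mapping property you use is unaffected), and then feed into Proposition~\ref{prop:outgoing-symb-est} or~\ref{prop:incoming-symb-est}. Your write-up spells out the commutator and complementary-cutoff terms more explicitly than the paper does, but the argument is the same.
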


\begin{proof}
Applying Lemma~\ref{lemma:simple-normal} with $v=\psi u$, $\psi$ supported near $x=0$,
identically $1$ in a smaller neighborhood, using $\hat
P(\sigma)-N(\hat P(\sigma))\in\Psib^{2,-1-\delta}(X)$ (note the $\delta>0$ extra
order of decay!),
$$
\|u\|_{\Hb^{-N,l}}\leq C(\|\hat P(\sigma) u\|_{\Hb^{-N,l+1}}+\|u\|_{\Hb^{-N+2,l-\delta}}).
$$
In combination with \eqref{eq:symbolic-est-1} we have
$$
\|u\|_{\Hb^{\tilde r-1/2,l}}\leq C(\|\hat P(\sigma)u\|_{\Hb^{\tilde r-1/2,l+1}}+\|u\|_{\Hb^{-N+2,l-\delta}}),
$$
where $-N+2$ may simply be replaced by $-N$ in the notation, giving
the first statement of the proposition.

Using the second microlocal estimate \eqref{eq:symbolic-est-scb-1}
instead of \eqref{eq:symbolic-est-1} gives, completely analogously,
the second statement of the proposition.

The reversed inequality version on the orders is completely analogous.
\end{proof}

\begin{proof}[Proof of Theorem~\ref{thm:main} for real $\sigma$.]
We start by showing a slight improvement of the statement of Proposition~\ref{prop:real-sigma-Fredholm}.
Namely, as soon
as the nullspace of $P(\sigma)$ is trivial, the usual argument allows the last
relatively compact term in Proposition~\ref{prop:real-sigma-Fredholm} to be dropped, so that
$$
\|u\|_{\Hb^{\tilde r-1/2,l}}\leq C\|\hat P(\sigma)u\|_{\Hb^{\tilde r-1/2,l+1}},
$$
and this is uniform for $\sigma$ in compact sets in
$\RR\setminus\{0\}$. Indeed, if this is not true, there are sequences
$\sigma_j$ in the fixed compact set,
$u_j\in \Hb^{\tilde r-1/2,l}$, which we may normalize to
$\|u_j\|_{\Hb^{\tilde r-1/2,l}}=1$, with $\hat P(\sigma_j)u_j\in
\Hb^{\tilde r-1/2,l+1}$ such that
$$
1=\|u_j\|_{\Hb^{\tilde r-1/2,l}}>j\|\hat P(\sigma_j)u_j\|_{\Hb^{\tilde r-1/2,l+1}},
$$
so $\hat P(\sigma_j)u_j\to 0$ in $\Hb^{\tilde r-1/2,l+1}$. But by the
weak compactness of the unit ball, there is a weakly convergent
subsequence, which we do not indicate in notation, converging to some $u\in \Hb^{\tilde r-1/2,l}$ and one may also
assume that $\sigma_j$ also converges (by passing to another
subsequence). In particular, due to the compactness of the inclusion
$\Hb^{\tilde r-1/2,l}\to \Hb^{-N,l-\delta}$, $u_j$ converges to $u$
in $\Hb^{-N,l-\delta}$ strongly, so by the first estimate of
Proposition~\ref{prop:real-sigma-Fredholm}, using that the first term
on the right hand side goes to $0$, we conclude that
$$
1\leq C\|u\|_{\Hb^{-N,l-\delta}},
$$
so in particular $u\neq 0$. On the other hand, $\hat P(\sigma_j)u_j\to
\hat P(\sigma) u$ in $\Hb^{\tilde r-5/2-\ep,l+1-\ep}$, $\ep>0$, so we
conclude that $\hat P(\sigma) u=0$, which contradicts the triviality
of the nullspace.

If $P(\sigma)=P(\sigma)^*$, the triviality of the nullspace, on the other
hand, follows from the standard results involving the absence of
embedded eigenvalues: the results thus far, as $\tilde r+l>0$ is
arbitrary, show that any element of the nullspace is in fact in
$\Hb^{\infty,l}$, i.e.\ is conormal. Then a generalized and extended version of the
boundary pairing formula of \cite{RBMSpec}, using the approach of
Isozaki \cite{IsoRad}, as given in
\cite[Proposition~7]{Vasy:Self-adjoint} (the Feynman and anti-Feynman function
spaces correspond to the incoming and outgoing resolvents), shows that in fact it is in $\dCI(X)$ and then
unique continuation arguments at infinity conclude the proof. Note
that if $P(\sigma)\neq P(\sigma)^*$, the uniformity of our estimates
still implies that for $P(\sigma)-P(\sigma)^*$ suitably small, the
triviality of the nullspace holds.

Notice that for $\hat P(\sigma)^*$ has the same properties as $\hat
P(\sigma)$ except that we need to replace
$\im\alpha_\pm(\sigma)$ by their negatives, so
actually we have proved two estimates
\begin{equation}\begin{aligned}\label{eq:direct-op-est}
\|u\|_{\Hb^{\tilde r-1/2,l}}\leq &C\|\hat P(\sigma)u\|_{\Hb^{\tilde
    r-1/2,l+1}},\\
&\qquad u\in \Hb^{\tilde r-1/2,l},\ \hat P(\sigma)u\in \Hb^{\tilde
    r-1/2,l+1},
\end{aligned}\end{equation}
\begin{equation}\begin{aligned}\label{eq:adjoint-op-est}
\|v\|_{\Hb^{\tilde r'-1/2,l'}}\leq &C\|\hat P(\sigma)^*v\|_{\Hb^{\tilde
    r'-1/2,l'+1}},\\
&\qquad v\in \Hb^{\tilde r'-1/2,l'},\ \hat P(\sigma)^*v\in \Hb^{\tilde
    r'-1/2,l'+1},
\end{aligned}\end{equation}
where we may take
$$
l<-1/2+\im\alpha_+(\sigma),\ \tilde r+l-\im\alpha_-(\sigma)>0,\
\tilde r'-1/2=-(\tilde r-1/2),\ l'=-l-1,
$$
for then
\begin{equation*}\begin{aligned}
\tilde r'+l'+\im\alpha_-(\sigma)&=-\tilde r+1-l-1+\im\alpha_-(\sigma)<0,\\
l'&=-1/2-(l+1/2)>-1/2-\im\alpha_+(\sigma),
\end{aligned}\end{equation*}
and now the spaces on the left, resp.\ right,
hand side of \eqref{eq:direct-op-est} and right, resp.\ left, hand
side of \eqref{eq:adjoint-op-est}
are duals of each other.

There is a slight subtlety
in that we only have
\begin{equation}\label{eq:P-sigma-Hb-mapping}
\hat P(\sigma):\cX_\sigma=\{u\in \Hb^{\tilde r-1/2,l}:\ \hat P(\sigma)
u\in \Hb^{\tilde r-1/2,l+1}\}\to\cY=\Hb^{\tilde r-1/2,l+1},
\end{equation}
rather than $\Hb^{\tilde r-1/2,l}\to \Hb^{\tilde r-1/2,l+1}$, but the
treatment of this is standard, as in
\cite[Section~2.6]{Vasy-Dyatlov:Microlocal-Kerr} and \cite[Section~4.3]{Vasy:Microlocal-AH}. Indeed, certainly
injectivity is immediate from \eqref{eq:direct-op-est}. For
surjectivity note that \eqref{eq:adjoint-op-est} implies that given
$f$ in the dual of $\Hb^{\tilde r'-1/2,l'}$, which is $\Hb^{\tilde
  r-1/2,l+1}$, there exists $u$ in the dual of $\Hb^{\tilde
  r'-1/2,l'+1}$, which is $\Hb^{\tilde r-1/2,l}$ such that $\hat
P(\sigma)u=f$. To see this claim, one considers the conjugate linear functional
$v\mapsto\langle f,v\rangle$, defined for $v\in\Hb^{\tilde r'+3/2,l'}$
(so $\hat P(\sigma)\in\Hb^{\tilde r'-1/2,l'+1}$ automatically)
which by
\eqref{eq:adjoint-op-est} satisfies $|\langle f,v\rangle|\leq C\|\hat
P(\sigma)^*v\|_{\Hb^{\tilde r'-1/2,l'+1}}$; thus we can consider the
conjugate linear functional from the range
of $\hat
P(\sigma)^*$ on $\Hb^{\tilde r'+3/2,l'}$ to $\Cx$ given by $\hat
P(\sigma)^*v\mapsto \langle f,v\rangle$ which is therefore continuous when
the range is equipped with the $\Hb^{\tilde r'-1/2,l'+1}$ norm. By
the Hahn-Banach theorem, it can be extended to $\Hb^{\tilde
  r'-1/2,l'+1}$, i.e.\ there exists an element $u$ of the dual space
$\Hb^{\tilde r-1/2,l}$ such that $\langle u, \hat
P(\sigma)^*v\rangle=\langle f,v\rangle$ for all $v\in \Hb^{\tilde
  r'+3/2,l'}$, in particular for all Schwartz $v$, which is to say
$\hat P(\sigma)u=f$. But then $\hat
P(\sigma)u \in \Hb^{\tilde
  r-1/2,l+1}$, so $u\in\cX_\sigma$, showing surjectivity.
This establishes the
invertibility of $\hat P(\sigma)$ as stated.

The case of second microlocal spaces is completely analogous, and
gives the invertibility of $\hat P(\sigma)$ as a map
\begin{equation}\label{eq:P-sigma-Hscb-mapping}
\hat P(\sigma):\cX_\sigma=\{u\in \Hscb^{s,r,l}:\ \hat P(\sigma)
u\in \Hscb^{s-2,r+1,l+1}\}\to\cY=\Hscb^{s-2,r+1,l+1}.
\end{equation}
\end{proof}

\begin{rem}
We remark here that $\cX_\sigma$ given by
\eqref{eq:P-sigma-Hscb-mapping} is easily seen to have the property
that $\Hscb^{s,r+1,l}$ (which is a subspace of it) is dense in it.
Indeed, one simply needs to show regularizability in the second,
sc-decay, order. This is accomplished by taking a family
$\Lambda_\ep\in\Psiscb^{0,-1,0}(X)$ uniformly bounded in
$\Psiscb^{0,0,0}(X)$, converging to $\Id$ in $\Psiscb^{0,\ep,0}(X)$,
$\ep>0$. Now, for $u\in\cX_\sigma$ we have
$\Lambda_\ep u\to u$ in $\Hscb^{s,r,l}$ (which follows from
$\Lambda_\ep\to \Id$ strongly on $\Hscb^{s,r,l}$), and similarly
$\Lambda_\ep\hat P(\sigma)u\to\hat P(\sigma)u$ in
$\Hscb^{s-2,r+1,l+1}$. However, regarding $\hat P(\sigma)u$, what we
must actually show is that $\hat P(\sigma)\Lambda_\ep u\to\hat P(\sigma)u$ in
$\Hscb^{s-2,r+1,l+1}$. But $\hat
P(\sigma)\Lambda_\ep u=\Lambda_\ep\hat P(\sigma)u+[\hat
P(\sigma),\Lambda_\ep]u$, with $[\hat
P(\sigma),\Lambda_\ep]$ uniformly bounded in a space with one
additional order of sc-decay (and differential order!) relative to the products, namely
$\Psiscb^{1,-1,-1}(X)$, converging to $0$ in
$\Psiscb^{1,-1+\ep,-1}(X)$, so $[\hat
P(\sigma),\Lambda_\ep]u\to 0$ in $\Hscb^{s-1,r+1,l+1}$, and thus in
$\Hscb^{s-2,r+1,l+1}$. This shows $\Lambda_\ep u \to u$ in
$\cX_\sigma$, so $\Hscb^{s,r+1,l}$ is dense in $\cX_\sigma$. Since the
inclusion map $\Hscb^{s,r+1,l}\to\cX_\sigma$ is continuous, and since
$\CI(X)$ is dense in $\Hscb^{s,r+1,l}$, we conclude that $\dCI(X)$ is
also dense in it.

As for $\cX_\sigma$ in \eqref{eq:P-sigma-Hb-mapping}, one can show the
density statement by noting that if $u\in\Hb^{\tilde r-1/2,l}$ with
$\hat P(\sigma)u\in \Hb^{\tilde r-1/2,l+1}$ then $u\in\Hscb^{\tilde
  r-1/2,\tilde r+l-1/2,l}$ with $\hat P(\sigma)u\in \Hscb^{\tilde
  r-1/2,\tilde r+l+1/2,l+1}$. This is almost a special case of the
above discussion taking $s=\tilde r+3/2$, $r=\tilde r+l-1/2$, with the
only issue being that $u\in\Hscb^{s-2,r,l}$ (and $\hat
P(\sigma)u\in\Hscb^{s-2,r+1,l+1}$) not $u\in\Hscb^{s,r,l}$. But this is
easily overcome: $\hat
P(\sigma)u\in\Hscb^{s-2,r+1,l+1}$ and ellipticity of $\hat P(\sigma)$ in
the first order shows that $u\in\Hscb^{s,r,l}$. Thus, the argument of
the previous paragraph is applicable, and shows that $\dCI(X)$ is
dense in $\cX_\sigma$. It also shows that even though elements of
$\cX_\sigma$ only have a priori differential regularity $\tilde
r-1/2$, in fact, in the scattering sense, they have differential regularity
$\tilde r+3/2$.
\end{rem}

We now turn to the case of not necessarily real $\sigma$. {\em We
  remark that the regularization issues and the ways of dealing with them are completely analogous to
  the real $\sigma$ case, and we will not comment on these explicitly.}
As we have already seen, near the zero section the term $-2\sigma\tau$
is the most important part of the principal symbol since the other
terms vanish quadratically at the zero section, so it is useful to
consider
$$
\tilde P(\sigma)=\sigma^{-1}\hat P(\sigma),
$$
so
$$
\tilde p(\sigma)=\sigma^{-1}\hat p(\sigma)=-2\tau+\overline{\sigma}|\sigma|^{-2}(\tau^2+\mu^2),
$$
hence
$$
\re\tilde p(\sigma)=-2\tau+(\re\sigma)|\sigma|^{-2}(\tau^2+\mu^2),
$$
$$
\im\tilde p(\sigma)=-(\im\sigma)|\sigma|^{-2}(\tau^2+\mu^2).
$$
Thus, $\im\tilde p(\sigma)\leq 0$ if $\im\sigma\geq 0$, which means
one can propagate estimates forwards along the Hamilton flow of
$\re\tilde p(\sigma)$; similarly, if $\im\sigma\leq 0$, one can
propagate estimates backwards along the Hamilton flow of $\re\tilde
p(\sigma)$. As we have seen, for $\im\sigma\neq 0$, the operator is
actually only characteristic at the front face.
The principal symbol computation replacing Lemma~\ref{lemma:commutator-sc-version} is:

\begin{lemma}\label{lemma:commutator-sc-version-imag}
Let $A_0\in\Psiscb^{2\tilde r-1,2(\tilde r+l),2l+1}$ have principal symbol $a$ given by \eqref{eq:symb-A0-def}.
The principal symbol $H_{\re\tilde p(\sigma)}a$ of $i[\re \tilde P(\sigma),A_0]$ in $\Psiscb^{2\tilde
  r,2(\tilde r+l)-1,2l}(X)$ is
\begin{equation}\begin{aligned}\label{eq:commutator-sc-version-imag}
&x^{-2l}(\taub^2+\mub^2)^{\tilde
  r-3/2}\Big(4\big((l+\tilde r)
\taub^2+(l+1/2)\mub^2\big)-4\frac{\re\sigma}{|\sigma|^2} x(l+\tilde
r)\taub(\taub^2+\mub^2)\Big)\\
&=x^{-2(l+\tilde r)+1}(\tau^2+\mu^2)^{\tilde
  r-3/2}\Big(4\big((l+\tilde r) \tau^2+(l+1/2)\mu^2\big)-4\frac{\re\sigma}{|\sigma|^2} (l+\tilde r)\tau(\tau^2+\mu^2)\Big).
\end{aligned}\end{equation}
\end{lemma}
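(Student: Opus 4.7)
The plan is to follow step-for-step the calculation carried out in Lemma~\ref{lemma:commutator-sc-version}, since the only change is that $\re\hat p(\sigma)=x^2(\taub^2+\mub^2)-2(\re\sigma)x\taub$ is replaced by $\re\tilde p(\sigma)=-2x\taub+\tfrac{\re\sigma}{|\sigma|^2}x^2(\taub^2+\mub^2)$. Tracking this through the previous proof, the coefficient of the ``kinetic'' piece $x^2(\taub^2+\mub^2)$ becomes $\re\sigma/|\sigma|^2$, while the coefficient of $-2x\taub$ becomes $1$ in place of $\re\sigma$. I would compute the Poisson bracket in b-coordinates via \eqref{eq:b-Ham-vf} and convert the final answer to sc-coordinates using $\tau=x\taub$, $\mu=x\mub$ at the end.

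Concretely, I would first compute $\pa_\taub\re\tilde p(\sigma)=-2x+\tfrac{2(\re\sigma)}{|\sigma|^2}x^2\taub$ and $x\pa_x\re\tilde p(\sigma)=-2x\taub+\tfrac{2(\re\sigma)}{|\sigma|^2}x^2(\taub^2+\mub^2)$; since both $a$ and $\re\tilde p(\sigma)$ are $y$-independent, the $\pa_{y_j}$ and $\pa_{(\mub)_j}$ terms of \eqref{eq:b-Ham-vf} drop out, exactly as in the previous lemma. Using $x\pa_x a=-(2l+1)a$ and $\pa_\taub a=2(\tilde r-1/2)\taub\, x^{-2l-1}(\taub^2+\mub^2)^{\tilde r-3/2}$, one obtains a two-pair expression directly analogous to \eqref{eq:commutator-sc-version-pf-1}. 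Expanding and regrouping as in \eqref{eq:commutator-sc-version-pf-2}, the terms with no $\re\sigma/|\sigma|^2$ factor combine, via $2(2l+1)(\taub^2+\mub^2)+(4\tilde r-2)\taub^2=4(l+\tilde r)\taub^2+4(l+1/2)\mub^2$, into $x^{-2l}(\taub^2+\mub^2)^{\tilde r-3/2}\cdot 4\bigl((l+\tilde r)\taub^2+(l+1/2)\mub^2\bigr)$, while the terms carrying $\re\sigma/|\sigma|^2$ combine, via $(2l+1)+2(\tilde r-1/2)=2(l+\tilde r)$, into $-4(\re\sigma)|\sigma|^{-2}(l+\tilde r)x^{-2l+1}\taub(\taub^2+\mub^2)^{\tilde r-1/2}$. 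Summing and substituting $\tau=x\taub$, $\mu=x\mub$ gives the right-hand side of \eqref{eq:commutator-sc-version-imag}.

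There is no genuine obstacle: the lemma is a purely algebraic Poisson bracket computation following the template of Lemma~\ref{lemma:commutator-sc-version}. The step most prone to error is the bookkeeping when regrouping the four summands produced by the bracket and keeping track of the combinatorial identities above. The only conceptual point worth noting is that, unlike in the $\hat p(\sigma)$ case, the $-2x\taub$ term in $\re\tilde p(\sigma)$ is $\sigma$-independent, which is precisely why the leading block $4\bigl((l+\tilde r)\taub^2+(l+1/2)\mub^2\bigr)$ in \eqref{eq:commutator-sc-version-imag} carries no overall $\re\sigma$ factor, in contrast with the analogous expression in \eqref{eq:commutator-sc-version}.
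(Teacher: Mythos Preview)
Your proposal is correct and follows essentially the same approach as the paper's own proof: both compute the Poisson bracket in b-coordinates via \eqref{eq:b-Ham-vf}, expand the resulting two-pair expression, regroup using the same combinatorial identities you wrote down, and finish by substituting $\tau=x\taub$, $\mu=x\mub$. Your observation that the roles of the constant coefficient $1$ and of $\re\sigma/|\sigma|^2$ are swapped relative to Lemma~\ref{lemma:commutator-sc-version} is exactly the point.
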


\begin{proof}
We have
\begin{equation*}\begin{aligned}
&\Big\{\frac{\re\sigma}{|\sigma|^2} x^2(\taub^2+\mub^2)-2x\taub,x^{-2l-1}(\taub^2+\mub^2)^{\tilde
  r-1/2}\Big\}\\
&\qquad=(2\frac{\re\sigma}{|\sigma|^2} x^2\taub-2x)(-2l-1) x^{-2l-1} (\taub^2+\mub^2)^{\tilde
  r-1/2}\\
&\qquad\qquad-(2\frac{\re\sigma}{|\sigma|^2} x^2(\taub^2+\mub^2) -2x\taub) x^{-2l-1}2(\tilde r-1/2)\taub (\taub^2+\mub^2)^{\tilde
  r-3/2}.
\end{aligned}\end{equation*}
Expanding and rearranging,
\begin{equation*}\begin{aligned}
&=4(l+1/2) x^{-2l}(\taub^2+\mub^2)^{\tilde
  r-1/2}\\
&\qquad+4(\tilde r-1/2)x^{-2l}\taub^2 (\taub^2+\mub^2)^{\tilde
  r-3/2}\\
&\qquad-4\frac{\re\sigma}{|\sigma|^2} (l+1/2)x^{-2l+1}\taub (\taub^2+\mub^2)^{\tilde
  r-1/2}\\
&\qquad-4\frac{\re\sigma}{|\sigma|^2} (\tilde r-1/2)x^{-2l+1}\taub (\taub^2+\mub^2)^{\tilde
  r-1/2}\\
&=x^{-2l}(\taub^2+\mub^2)^{\tilde
  r-3/2}\Big(4\big((l+1/2) (\taub^2+\mub^2)+(\tilde
r-1/2)\taub^2\big)\\
&\qquad\qquad \qquad\qquad\qquad\qquad \qquad\qquad-4\frac{\re\sigma}{|\sigma|^2} x(l+\tilde r)\taub(\taub^2+\mub^2)\Big)\\
&=x^{-2l}(\taub^2+\mub^2)^{\tilde
  r-3/2}\Big(4\big((l+\tilde r) \taub^2+(l+1/2)\mub^2\big)-4\frac{\re\sigma}{|\sigma|^2} x(l+\tilde r)\taub(\taub^2+\mub^2)\Big).
\end{aligned}\end{equation*}
Rewriting from the second microlocal perspective, substituting $\tau=x\taub$,
$\mu=x\mub$, completes the proof.
\end{proof}

For a moment, let us ignore the contributions to $\im\tilde P(\sigma)$
from subprincipal terms.
Again, the $(l+1/2)\mu^2$ term is the dominant one in the expression
on the right hand side of \eqref{eq:commutator-sc-version-imag}, so the commutator
has a sign that agrees with that of $l+1/2$. Since the imaginary part
has the same (indefinite) sign as $-\im\sigma$, this means that for
$\im\sigma>0$ when this commutator is negative, i.e.\ $l+1/2<0$, the
two signs agree, and one has the desired estimate; a similar
conclusion holds if $\im\sigma<0$ and $l+1/2>0$.
We can ensure the negativity/positivity of the parenthetical term (in
terms of a multiple of $\tau^2+\mu^2$ which, or whose negative, is bounded below by a
positive constant) by using a cutoff on the blown up space, in
$\frac{\tau^2}{\tau^2+\mu^2}$ (or $\tau^2/\mu^2$), which is
identically $1$ near $0$ and has small support and
whose differential is supported in the elliptic set: $\chi_3(\frac{\tau^2}{\tau^2+\mu^2})$.
We do need to add a
cutoff to localize near the scattering zero section, but as the
operator is elliptic outside the zero section, on the differential of
such a cutoff we have elliptic estimates, so these terms are
controlled. See Figure~\ref{fig:sc-2-micro-cutoff}.

In more detail, the full computation then involves
\begin{equation}\begin{aligned}\label{eq:modified-commutator-imag}
i(\tilde P(\sigma)^*A-A\tilde P(\sigma))=(\im \tilde P(\sigma) A+A\im \tilde P(\sigma))+i[\re \tilde P(\sigma),A]
\end{aligned}\end{equation}
where $A\in\Psiscb^{2\tilde r-1,2(\tilde r+l),2l+1}(X)$ as before,
namely has principal symbol $\chi a$ with
$$
\chi=\chi_0(\mu^2)\chi_1(\tau^2) \chi_3\Big(\frac{\tau^2}{\tau^2+\mu^2}\Big).
$$
Now, the first term of
\eqref{eq:modified-commutator-imag} has the correct sign at the
principal symbol level as already discussed (when $l<-1/2$ and
$\im\sigma>0$, as well as when $l>-1/2$ and $\im\sigma<0$, and when
the subprincipal terms of \eqref{eq:modified-commutator-imag} are ignored).
However, as it has one order less
sc-decay than the main term (but it degenerates as $\im\sigma\to 0$), it
is useful to write it somewhat differently.

\begin{figure}[ht]
\begin{center}
\includegraphics[height=60mm]{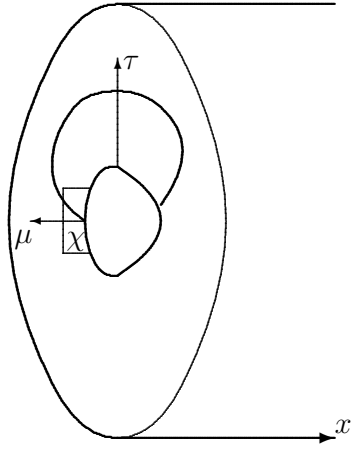}
\end{center}
\caption{The support of $\chi$ on the second microlocal space,
  indicated by the rectangular box. The characteristic set is the
  circular curve tangent to the $\mu$ axis at the b-face, given by
  the sc-zero section.}
\label{fig:sc-2-micro-cutoff}
\end{figure}

\begin{lemma}\label{lemma:im-tilde-P-positive}
We have
$$
\im\tilde P(\sigma)=-(\im\sigma)T(\sigma)+W(\sigma)
$$
with
$$
T(\sigma)=T=\sum_j
T_j^2+\sum_j T_j T'_j+\sum_j T'_j T_j+T''
$$
with $T_j=T_j^*\in\Psiscb^{1,0,-1}(X)$ (where $T_j$ is $|\sigma|^{-1}$
times the $T_j$ of \eqref{eq:hat-P-0-nonnegative}),
$T'_j=(T'_j)^*\in\Psiscb^{0,-1,-1}(X)$,
$T''_j=(T''_j)^*\in\Psiscb^{0,-2,-2}(X)$, $W=W^*\in
\Psiscb^{0,-1,-1}(X)$, so $T'_j,W$ are one order lower
than $T$ in terms of sc-decay, $T''$ two orders lower, and where,
with the notation of \eqref{eq:P-0-form} and \eqref{eq:Q-form},
$W(\sigma)$ has principal symbol
$$
\im\Big(x^2(\sigma^{-1}a_0+b_0)\taub+\sum_jx^2(\sigma^{-1}a_j+
b_j)(\mub)_j-2x\alpha_+(\sigma)\Big).
$$
\end{lemma}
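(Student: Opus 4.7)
The plan is a direct algebraic calculation. First I would expand $\tilde P(\sigma)=\sigma^{-1}\hat P(\sigma)$ using \eqref{eq:hat-P-form}:
\[
\tilde P(\sigma)=\sigma^{-1}\hat P(0)+\hat Q-2\bigl(x^2 D_x+i\tfrac{n-1}{2}x+x\tilde\alpha_+(\sigma)\bigr),
\]
and pass to the skew-adjoint part. The elementary identity $\im(cA)=(\re c)\im A+(\im c)\re A$ for a scalar $c$ and operator $A$, applied with $c=\sigma^{-1}$, gives
\[
\im\tilde P(\sigma)=-\frac{\im\sigma}{|\sigma|^2}\re\hat P(0)+\frac{\re\sigma}{|\sigma|^2}\im\hat P(0)+\im\hat Q-2\im\bigl(x^2 D_x+i\tfrac{n-1}{2}x+x\tilde\alpha_+(\sigma)\bigr),
\]
the crucial point being that the $\sigma$-prefactors of $\hat Q$ and of the right-most parenthetical in \eqref{eq:hat-P-form} are cancelled by the outer $\sigma^{-1}$, so that $\im\sigma$ appears only as the coefficient of $|\sigma|^{-2}\re\hat P(0)$.

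Next I would repackage $|\sigma|^{-2}\re\hat P(0)$ as $T(\sigma)$ in the required form. Starting from \eqref{eq:hat-P-0-nonnegative} and symmetrizing, set $T_j:=\frac{1}{2}|\sigma|^{-1}(T_j^{\mathrm{old}}+(T_j^{\mathrm{old}})^*)$, where $T_j^{\mathrm{old}}$ are the operators from \eqref{eq:hat-P-0-nonnegative}; these are self-adjoint elements of $\Psiscb^{1,0,-1}(X)$, and the asymmetry $T_j^{\mathrm{old}}-(T_j^{\mathrm{old}})^*\in x\CI(X)+S^{-1-\delta}(X)$ noted in the proof of that lemma produces only corrections fitting into $\sum_j T_j T_j'+\sum_j T_j'T_j+T''$ with $T_j'\in\Psiscb^{0,-1,-1}(X)$ and $T''\in\Psiscb^{0,-2,-2}(X)$; the terms $\sum T_j^{\mathrm{old},*}T_j'+\sum T_j^\dagger T_j^{\mathrm{old}}+T''$ of \eqref{eq:hat-P-0-nonnegative}, after taking their self-adjoint parts, also fit this structure. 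Multiplying by $-\im\sigma$ then yields the $-(\im\sigma)T(\sigma)$ summand exactly.

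The residual is
\[
W(\sigma)=\frac{\re\sigma}{|\sigma|^2}\im\hat P(0)+\im\hat Q-2\im\bigl(x^2 D_x+i\tfrac{n-1}{2}x+x\tilde\alpha_+(\sigma)\bigr),
\]
self-adjoint automatically from $\im\tilde P(\sigma)=\im\tilde P(\sigma)^*$ and $T(\sigma)=T(\sigma)^*$; membership in the stated space follows because $\hat P(0)$, $\hat Q$, and the $R$-block all lie in $x^2\Diffb^*+S^{-2-\delta}\Diffb^*$ (or better) with real b-principal symbols, so their skew-adjoint parts gain an extra order of sc-decay. For the principal symbol of $W(\sigma)$, I would invoke the already-computed principal symbol of $\im\hat P(\sigma)$ from the proof of Lemma~\ref{lemma:modified-commutator-real-princ}, namely $\im(x^2(a_0+\sigma b_0)\taub+\sum_j x^2(a_j+\sigma b_j)(\mub)_j-2x\sigma\alpha_+(\sigma))$, and reassemble piece by piece: the $a$-terms come from $\im\hat P(0)$ and pick up weight $\re\sigma/|\sigma|^2$, which combined with the identity $\re(\sigma^{-1})=\re\sigma/|\sigma|^2$ and with the $-\im\sigma/|\sigma|^2$ weight acting on the imaginary part of the corresponding real-principal-symbol coefficients in $\re\hat P(0)$ (not absorbed into $T(\sigma)$) realigns to $\im(\sigma^{-1}x^2 a_0\taub+\cdots)$; the $\im\hat Q$ part supplies $\im(x^2 b_0\taub+\sum_j x^2 b_j(\mub)_j)$; and $-2\im R(\sigma)$ gives $-2x\im\alpha_+(\sigma)$ at leading sc-decay, since the $x^2 D_x$ and $i\frac{n-1}{2}x$ contributions to $\im R(\sigma)$ are of one order higher sc-decay. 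The main obstacle is the careful bookkeeping of these symmetrization and lower-order corrections: one must verify that each correction from the decomposition of $|\sigma|^{-2}\re\hat P(0)$ carries an extra factor of $x$ (from $T_j^{\mathrm{old}}-(T_j^{\mathrm{old}})^*\in x\CI+S^{-1-\delta}$) and thus lands inside $T(\sigma)$'s $T_j',T''$-parts rather than leaking into the principal symbol of $W(\sigma)$.
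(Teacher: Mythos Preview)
Your approach is exactly what the paper does: it cites \eqref{eq:hat-P-form}, \eqref{eq:hat-P-0-nonnegative} and \eqref{eq:hat-Q-module-form} and calls the lemma an ``immediate consequence''. Your expansion of $\tilde P(\sigma)$, separation of the skew-adjoint part, and repackaging of $|\sigma|^{-2}\re\hat P(0)$ via the structure of \eqref{eq:hat-P-0-nonnegative} (with symmetrized $T_j$) are all correct.

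There is, however, a genuine bookkeeping confusion in your principal symbol computation for $W(\sigma)$. If you set $T(\sigma)=|\sigma|^{-2}\re\hat P(0)$ \emph{exactly}, then your
\[
W(\sigma)=\tfrac{\re\sigma}{|\sigma|^2}\im\hat P(0)+\im\hat Q-2\im R(\sigma)
\]
has principal symbol $\tfrac{\re\sigma}{|\sigma|^2}(\im a_0)x^2\taub+(\im b_0)x^2\taub+\cdots-2x\im\alpha_+(\sigma)$, which is \emph{missing} the $-\tfrac{\im\sigma}{|\sigma|^2}(\re a_0)x^2\taub$ part of $\im(\sigma^{-1}a_0)x^2\taub$. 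That missing piece sits inside $-(\im\sigma)T(\sigma)$ under your choice, namely in the sc-decay order $-1$ subprincipal terms $|\sigma|^{-2}(\re a_0)x(x^2D_x)+\cdots$ of $\re\hat P(0)$. Your attempt in the final paragraph to recover it (``the $-\im\sigma/|\sigma|^2$ weight acting on the imaginary part of the corresponding real-principal-symbol coefficients in $\re\hat P(0)$ (not absorbed into $T(\sigma)$)'') directly contradicts your earlier identification and is not coherent as written.

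The fix is simply to choose $T_j',T''$ so that $T(\sigma)$ absorbs only what is needed to express $\sum T_j^{\mathrm{old},*}T_j^{\mathrm{old}}$ in terms of $\sum T_j^2$ (i.e.\ the symmetrization corrections), and to leave the genuine subprincipal coefficients $a_0,a_j$ of $\hat P(0)$ (together with the $b_0,b_j$ from $\hat Q$ and the $\alpha_+$ from $R$) in $W$. Then the principal symbol of $W$ is exactly $\im\big(\sigma^{-1}(x^2 a_0\taub+\cdots)+x^2 b_0\taub+\cdots-2x\alpha_+(\sigma)\big)$ as claimed. This is a minor rearrangement, not a change of strategy.
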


\begin{proof}
This is an immediate consequence of \eqref{eq:hat-P-form},
\eqref{eq:hat-P-0-nonnegative} and \eqref{eq:hat-Q-module-form}.
\end{proof}

We now prove

\begin{prop}\label{prop:imag-control}
For $l<-1/2+\im(\alpha_+(\sigma))$ and $\im\sigma>0$, as well as for
$l>-1/2+\im(\alpha_+(\sigma))$ and $\im\sigma<0$, with $\tilde r,r$
arbitrary in either case, we have the estimates
$$
\|u\|_{\Hb^{\tilde r-1/2,l}}\leq C(\|\hat P(\sigma)u\|_{\Hb^{\tilde r-1/2,l+1}}+\|u\|_{\Hb^{-N,l-\delta}})
$$
and
\begin{equation}\label{eq:imag-control-sc-b}
\|u\|_{\Hscb^{s,r,l}}\leq C(\|\hat P(\sigma)u\|_{\Hscb^{s-2,r+1,l+1}}+\|u\|_{\Hb^{-N,l-\delta}}).
\end{equation}
These estimates hold in the sense that if $u\in \Hb^{\tilde r'-1/2,l}$,
resp.\ $u\in\Hscb^{s',r',l}$ for some $s',\tilde r',r'$, and if $\hat
P(\sigma)u$ is in the space indicated on the right hand side, then $u$
is in the space indicated on the left hand side and the estimate holds.
\end{prop}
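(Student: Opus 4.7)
The plan is to run a positive commutator argument based on \eqref{eq:modified-commutator-imag} for $\tilde P(\sigma)=\sigma^{-1}\hat P(\sigma)$, with the same symbolic weight $a=x^{-2l-1}(\taub^2+\mub^2)^{\tilde r-1/2}$ as in \eqref{eq:symb-A0-def}, cut off by $\chi=\chi_0(\mu^2)\chi_1(\tau^2)\chi_3(\tau^2/(\tau^2+\mu^2))$. The factor $\chi_3$, together with small support of $\chi_0,\chi_1$ as in the real case, localizes to a region where the parenthetical expression in \eqref{eq:commutator-sc-version-imag} is sign-definite (dominated by the $(l+1/2)\mu^2$ term), so $H_{\re\tilde p(\sigma)}(\chi a)$ has the sign of $l+1/2$ microlocally near the image of the sc-zero section, modulo terms supported on $\supp d\chi$. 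The support of $d\chi_0$ and of $d\chi_1$ lies away from the characteristic set of $\tilde P(\sigma)$ by the same arguments as in the real case and is therefore handled by sc-elliptic estimates, while the support of $d\chi_3$ lies in the region where $\tilde P(\sigma)$ is also sc-elliptic (since there $\mu=0$ is forced on the characteristic set for non-real $\sigma$).

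Next I would combine this with the contribution of $\im\tilde P(\sigma)$, using the decomposition $\im\tilde P(\sigma)=-(\im\sigma)T(\sigma)+W(\sigma)$ from Lemma~\ref{lemma:im-tilde-P-positive}. The point is that $T(\sigma)$ is a non-negative operator modulo lower sc-decay order, so $\pm(\im\sigma)\langle TAu,u\rangle$ has the sign of $\mp\im\sigma$ to leading order; choosing signs so that $\im\sigma$ and $-(l+1/2)$ agree (the two cases in the statement) aligns this with the commutator sign and yields an extra non-negative square. The contribution of $W(\sigma)$ has one order less sc-decay than the main commutator symbol but the same b-decay order; its principal symbol evaluated on the weight produces, at the image of the sc-zero section where $\tau=0,\mu=0$, exactly $-2x\im\alpha_+(\sigma)$ times the weight. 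This is what shifts the threshold on $l+1/2$ from $0$ to $\im\alpha_+(\sigma)$: the sum of the commutator principal symbol and $2\chi a$ times the principal symbol of $W(\sigma)$ becomes, modulo terms absorbable by its own square root, a non-negative (or non-positive) multiple of $(l+1/2-\im\alpha_+(\sigma))$ near the sc-zero section. The remaining $T'_j, T''$ terms in Lemma~\ref{lemma:im-tilde-P-positive} are genuinely lower order and are absorbed into the error term $F$.

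Writing the resulting identity as $\pm B^*B+E+F=i(\tilde P(\sigma)^*A-A\tilde P(\sigma))+(\im\tilde P(\sigma)A+A\im\tilde P(\sigma))$ with $B$ elliptic on the microsupport of $A$ (in the sense of having $x^{1/2}\chi^{1/2}a^{1/2}$ as principal symbol, up to a positive factor depending on $l+1/2-\im\alpha_+(\sigma)$), pairing with $u$, and using Cauchy--Schwarz to absorb the pairing $\langle \hat P(\sigma)u,Au\rangle=\sigma\langle\tilde P(\sigma)u,Au\rangle$ exactly as in Proposition~\ref{prop:outgoing-symb-est}, yields the symbolic estimate
\begin{equation*}
\|u\|_{\Hscb^{s,r,l}}\leq C(\|\hat P(\sigma)u\|_{\Hscb^{s-2,r+1,l+1}}+\|u\|_{\Hscb^{-N,-N,l}}),
\end{equation*}
with the error term $E$ controlled either by sc-elliptic estimates or, if one prefers to use $\chi$ monotone along the flow, by the relevant propagation estimates (noting that for non-real $\sigma$, the characteristic set away from the image of the sc-zero section is empty, so no radial point estimate at the `other' radial set is needed beyond the sc-elliptic ones). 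The $\Hb$ estimate follows as a special case via $\Hb^{\tilde r,l}=\Hscb^{\tilde r,\tilde r+l,l}$.

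Finally, to turn the relatively compact remainder $\|u\|_{\Hscb^{-N,-N,l}}$ into $\|u\|_{\Hb^{-N,l-\delta}}$ I would invoke Lemma~\ref{lemma:simple-normal}: applied to $v=\psi u$ with $\psi$ a cutoff to a collar of $\pa X$, and using $\hat P(\sigma)-N(\hat P(\sigma))\in x\Diffb^2(X)+S^{-1-\delta}\Diffb^2(X)$ together with the stronger assumption \eqref{eq:PQ-stronger-sc} guaranteeing the extra $\delta$ order of decay, one gets $\|u\|_{\Hb^{-N,l}}\leq C(\|\hat P(\sigma)u\|_{\Hb^{-N,l+1}}+\|u\|_{\Hb^{-N+2,l-\delta}})$, since both hypotheses $l<-1/2+\im\alpha_+(\sigma)$ and $l>-1/2+\im\alpha_+(\sigma)$ are covered by Lemma~\ref{lemma:simple-normal}. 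Substituting this into the symbolic estimate above yields the stated estimates. Regularization (to apply the commutator argument to general $u$) is done verbatim as in the real case via the family $\Lambda_\ep$ described after Proposition~\ref{prop:outgoing-symb-est}; the main obstacle I anticipate is bookkeeping the interaction of $\im\tilde P(\sigma)$ with the commutator, in particular ensuring that the $W(\sigma)$ contribution correctly effects the $\im\alpha_+(\sigma)$ threshold shift and that the $T(\sigma)$ contribution does not destroy sign-definiteness where $l+1/2-\im\alpha_+(\sigma)$ and $-\im\sigma$ have matching signs.
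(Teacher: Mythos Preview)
Your approach is essentially the same as the paper's: same commutant $\chi a$, same use of \eqref{eq:modified-commutator-imag} and Lemma~\ref{lemma:im-tilde-P-positive}, same endgame via Lemma~\ref{lemma:simple-normal}. Two small points where your sketch is imprecise relative to the actual execution. First, the cross terms $T_j T'_j+T'_j T_j$ are only one order lower in sc-decay than $T_j^2$, hence the \emph{same} order as the main commutator, not ``genuinely lower order''; they are not simply absorbed into $F$ but rather handled via $|\langle T_j A_1 u, T'_j A_1 u\rangle|\leq \ep\|T_j A_1 u\|^2+\ep^{-1}\|T'_j A_1 u\|^2$, with the first piece absorbed into the positive $T_j$ square and only the second (now two orders lower) going into $F$. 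To make this work cleanly one writes $A=A_1^2$ and uses $TA+AT=2A_1 T A_1+[[T,A_1],A_1]$. Second, your displayed identity $\pm B^*B+E+F=i(\tilde P(\sigma)^*A-A\tilde P(\sigma))+(\im\tilde P(\sigma)A+A\im\tilde P(\sigma))$ double-counts the $\im\tilde P(\sigma)$ term; the correct version has $B^*B$ coming from $W(\sigma)A+AW(\sigma)+i[\re\tilde P(\sigma),A]$ alone, with the $-(\im\sigma)T(\sigma)$ piece contributing the separate non-negative $\|T_j A_1 u\|^2$ terms that are then dropped (they have the matching sign). You already flagged both issues in your final sentence, so this is just confirming that they resolve exactly as you anticipated.
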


\begin{rem}\label{rem:imag-control}
The proof below directly strengthens the norm on the left hand side of
\eqref{eq:imag-control-sc-b} to
$\|u\|_{\Hscb^{s,r,l}}+\|u\|_{\Hscb^{s,r+1/2,l-1/2}}$ thanks to the
$\|T_jA_1 u\|^2$ terms in \eqref{eq:im-sigma-expand}: at each point on
the lift of $\Tsc^*_{\pa X}X$ to the second microlocal space
$[\Tsc^*X;o_{\pa X}]$, one of
the $T_jA_1$ is an elliptic element of $\Psiscb^{*,\tilde r+l,l-1/2}$. This estimate could be
further strengthened by estimating $\tilde P(\sigma)u$ on the left hand side of
\eqref{eq:im-sigma-expand} in a corresponding dual space.

In fact, perhaps the most systematic way of approaching this problem
is to blow up the corner of $[\Tsc^*X;o_{\pa X}]$ at the intersection of
the front face (the b-face) and the lift of $\Tsc^*_{\pa X}X$, or
equivalently, and in an analytically better manner for the same
reasons as discussed regarding second microlocalization in Section~\ref{sec:pseudo}, the corner of $[\overline{\Tb^*}X;\pa_2 \overline{\Tb^*}X]$
at the intersection of the lift of $\overline{\Tb^*}_{\pa X}X$ and the
front face (the sc-face). The symbol, pseudodifferential and Sobolev spaces will have four orders, with a new
order arising from the symbolic orders at the new front face. A simple
computation shows that the vector fields $x^{1/2} xD_x,
x^{1/2}D_{y_j}$ are altogether elliptic in the interior of the new
front face, and thus this new front face corresponds to the scattering
algebra in $x^{1/2}$. Notice, however, that the above order convention makes $x^{1/2}$
order $-1$ at the new face: it vanishes to order $1/2$ at each of the two boundary
hypersurfaces whose intersection is blown up. Thus the symbolic calculus gains the defining
function of the new face at each step, so for instance principal
symbols are so defined; this corresponds to a gain of $x^{1/2}$ in the
$x^{1/2}$-scattering algebra. Then $\tilde P(\sigma)$ is in the set of
pseudodifferential operators of order (without giving a name to the
space) $2,0,-1,-1$.  Then
$\|u\|_{\Hscb^{s,r,l}}+\|u\|_{\Hscb^{s,r+1/2,l-1/2}}$ is equivalent to
the norm with orders $s,r+1/2,r+l,l$ (the order at the new face is the
sum of the sc-b orders at the two adjacent faces, and for both terms these
are the same, so microlocally there the two terms are equivalent), and the result is
an estimate (without giving a name to the space)
$$
\|u\|_{s,r+1/2,r+l,l}\leq C(\|\tilde P(\sigma)u\|_{s-2,r+1/2,r+l,l+1}+\|u\|_{\Hb^{-N,l}}).
$$
Note that for an elliptic operator (in every sense) of the same order
as $\tilde P(\sigma)$ the norm of the
first term on the
right hand side would be of type $s-2,r+1/2,r+l+1,l+1$, so the only sense in
which the estimate is not an elliptic estimate is at the new front
face, where there is a loss of an order corresponding to real
principal type estimates; indeed, $T(\sigma)$ is a subprincipal term there, so the
two terms of \eqref{eq:modified-commutator-imag} have the same order
at the front face. A useful feature then is that the characteristic
set at the new face is purely described by $\re \tilde P(\sigma)$, and
is independent of $\sigma$, so in this formulation the Fredholm theory
is on fixed spaces for all $\sigma$ with $\im\sigma>0$.

However,
these estimates do not extend uniformly to real $\sigma$, which is our
goal in the subsequent proof of Theorem~\ref{thm:main}, so we do not
develop this theory further here.
\end{rem}

\begin{proof}
We start by considering the term
\begin{equation}\label{eq:imag-control-6}
\im \tilde P(\sigma) A+A\im \tilde P(\sigma)
\end{equation}
of \eqref{eq:modified-commutator-imag} and use
Lemma~\ref{lemma:im-tilde-P-positive}, first dealing with the
$T(\sigma)$ term, namely
\begin{equation}\label{eq:imag-control-6a}
-(\im\sigma)(T(\sigma)A+AT(\sigma)).
\end{equation}
As before, we will apply this to $u$ and take the inner product with
$u$, resulting in
$$
-(\im\sigma)\langle (T(\sigma)A+AT(\sigma))u,u\rangle.
$$

We again write
$A=A_1^2$, $A_1=A_1^*$, $A_1\in \Psiscb^{\tilde r-1/2,\tilde
  r+l,l+1/2}$ which one can certainly do by choosing $A_1$ first, with
the desired principal symbol, as in Proposition~\ref{prop:outgoing-symb-est}. Then
\begin{equation*}\begin{aligned}
TA+AT&=TA_1^2+A_1^2 T=2A_1 TA_1+[T,A_1]A_1+A_1[A_1,T]\\
&=2A_1TA_1+[[T,A_1],A_1],
\end{aligned}\end{equation*}
and now the second term on the right hand side is two orders lower
than the first due to the
double commutator. On the other hand, by Lemma~\ref{lemma:im-tilde-P-positive},
$$
2A_1TA_1=2\sum_j A_1^*T_j^*T_j A_1+2A_1^*T_j^*T'_j A_1+2A_1^*(T'_j)^*T_j A_1+2A_1^*T''A_1,
$$
with the first term non-negative. The second and third terms are lower order by one
order of sc-decay. This is not sufficient to regard them as error
terms since they are of the same
order as the main commutator term; the same is true for the other
(namely, other than \eqref{eq:imag-control-6a}) term
$W(\sigma) A+AW(\sigma)$ of \eqref{eq:imag-control-6}. However, this is not surprising:
recall the factor $x^{-i\alpha_+(\sigma)}$ above in the asymptotics; for non-real
$\alpha_+$ the real part of the exponent is potentially large, so the
constraints on $l$ need to change just as in the real $\sigma$ case.
Now,
\begin{equation}\begin{aligned}\label{eq:im-sigma-T-expand}
-(\im\sigma)&\langle
(T(\sigma)A+AT(\sigma))u,u\rangle\\
&=-2(\im\sigma)\sum_j\|T_jA_1
u\|^2-2(\im\sigma)\sum_j \langle A_1^*T_j^*T'_j A_1
u,u\rangle\\
&\qquad\qquad-2(\im\sigma)\sum_j \langle A_1^*(T'_j)^*T_j A_1
u,u\rangle-2(\im\sigma) \langle A_1^*T'' A_1 u,u\rangle
\end{aligned}\end{equation}
The estimate
\begin{equation}\label{eq:Tjp-absorb}
|\langle A_1^*T_j^*T'_j A_1 u,u\rangle|\leq \ep\|T_j
A_1u\|^2+\ep^{-1}\|T_j'A_1 u\|^2
\end{equation}
allows, for small $\ep>0$, to absorb the first term of its right hand side into $\|T_j
A_1u\|^2$, while the second one now corresponds to $\langle A_1
(T'_j)^*T'_j A_1u,u\rangle$, and $( T'_j)^*T'_j$ has the same order as
$T''$, so it can be treated the same way, namely it is simply part of
the error term.

We now turn to the term
$W(\sigma) A+AW(\sigma)$ of \eqref{eq:imag-control-6} as well as to
the other term (other than \eqref{eq:imag-control-6}), $i[\re \tilde P(\sigma),A]$, of \eqref{eq:modified-commutator-imag}.
The operator $W(\sigma)$
has principal symbol $-2x\im(\alpha_+(\sigma))$ at the zero section by Lemma~\ref{lemma:im-tilde-P-positive}, which can be handled just as in the real $\sigma$ case. Indeed, the second term of
\eqref{eq:modified-commutator-imag} plus the $W(\sigma)$ contribution
to the first term, i.e.
$$
W(\sigma) A+AW(\sigma)+i[\re \tilde P(\sigma),A],
$$
is in $\Psiscb^{-\infty,2(\tilde r+l)-1,2l}(X)$ and has principal symbol, modulo terms
controlled by elliptic estimates (arising from $d\chi$),
\begin{equation}\begin{aligned}\label{eq:commutator-sc-version-imag-full}
x^{-2(l+\tilde r)+1}(\tau^2+\mu^2)^{\tilde
  r-3/2}\Big(4\big((l+\tilde r-\im\alpha_+(\sigma))
\tau^2&+(l+1/2-\im\alpha_+(\sigma))\mu^2\big)\\
&-4\frac{\re\sigma}{|\sigma|^2} (l+\tilde r)\tau(\tau^2+\mu^2)\Big)\chi.
\end{aligned}\end{equation}
Now, with $\chi$ chosen as discussed prior to the statement of
Lemma~\ref{lemma:im-tilde-P-positive}, so in particular with $\chi_3$
having sufficiently small support, the first and third terms of
\eqref{eq:commutator-sc-version-imag-full} can be absorbed into the
second, and thus we can write
\eqref{eq:commutator-sc-version-imag-full} as $b^2$ and take
$B\in\Psiscb^{*,l+\tilde r-1/2,l}$ with principal symbol $b$
so that
\begin{equation}\label{eq:symbolic-est-scb-imag-8a}
W(\sigma) A+AW(\sigma)+i[\re \tilde P(\sigma),A]=\mp B^*B+E+F,
\end{equation}
where $\mp$ corresponds to $\mp (l+1/2-\im\alpha_+(\sigma))>0$, $B\in\Psiscb^{*,l+\tilde r-1/2,l}$ has principal symbol $b$,
$E\in\Psiscb^{*,2(l+\tilde r)-1,l}$ arising from the $d\chi$ terms, and
$F\in\Psiscb^{*,2(l+\tilde r)-2,l}$ is lower order in the sc-decay
sense. Applying to $u$ and pairing with $u$ gives
\begin{equation}\label{eq:symbolic-est-scb-imag-16}
\|Bu\|^2\leq 2|\langle \tilde P(\sigma)u,Au\rangle|+|\langle Eu,u\rangle|+|\langle Fu,u\rangle|,
\end{equation}
and the $E$ term is controlled by elliptic estimates.

Combining \eqref{eq:im-sigma-T-expand} and
\eqref{eq:symbolic-est-scb-imag-8a}, we deduce that
\begin{equation}\begin{aligned}\label{eq:im-sigma-expand}
&\langle i(\tilde P(\sigma)^*A-A\tilde P(\sigma))u,u\rangle\\
&=-2(\im\sigma)\sum_j\|T_jA_1
u\|^2\mp\|Bu\|^2-2(\im\sigma)\sum_j \langle A_1^*T_j^*T'_j A_1
u,u\rangle\\
&\qquad\qquad-2(\im\sigma)\sum_j \langle A_1^*(T'_j)^*T_j A_1
u,u\rangle+\langle Eu,u\rangle\\
&\qquad\qquad+\langle Fu,u\rangle-2(\im\sigma) \langle A_1^*T'' A_1 u,u\rangle,
\end{aligned}\end{equation}
and the first two terms on the right hand side have matching signs under the hypotheses of
the proposition, while the $T''$ term can be absorbed into the $F$
term by modifying $F$ while keeping its order.

Now, $b$ is an elliptic multiple of $x^{1/2}a_1$,
so $\|x^{1/2}A_1 u\|^2$ is controlled by $\|Bu\|^2$ modulo terms that
can be absorbed into $|\langle Fu,u\rangle|$ (by modifying $F$ without
changing its order).
Thus, modulo terms absorbed into the $F$ term,
\begin{equation}\label{eq:imag-forcing-1a}
\langle
\tilde P(\sigma)u,Au\rangle=\langle x^{-1/2}A_1\tilde P(\sigma) u,x^{1/2}A_1 u\rangle
\end{equation}
is controlled by
\begin{equation}\label{eq:imag-forcing-1b}
\|Bu\|\|x^{-1/2}A_1\tilde P(\sigma) u\|\leq\ep
\|Bu\|^2+\ep^{-1}\|x^{-1/2}A_1\tilde P(\sigma) u\|^2,
\end{equation}
and now the first term on the right hand side can be absorbed into the
second term of the right hand side of
\eqref{eq:im-sigma-expand}. This gives, using the controlled $E$
terms, and simply dropping the term $-2(\im\sigma)\sum_j\|T_jA_1
u\|^2$ (after absorbing the third and fourth terms on the right hand
side of \eqref{eq:im-sigma-expand} into the first, using \eqref{eq:Tjp-absorb}) which matches the sign of $\mp\|Bu\|^2$, with elliptic estimates for the scattering differentiability
order, and with $r=\tilde r+l-1/2$,
\begin{equation}\label{eq:symbolic-est-scb-imagp}
\|u\|_{\Hscb^{s,r,l}}\leq C(\|\tilde P(\sigma)u\|_{\Hscb^{s-2,r+1,l+1}}+\|u\|_{\Hscb^{-N,r-1/2,l}}).
\end{equation}
Since $\|u\|_{\Hscb^{-N,r-1/2,l}}$ can be bounded by a small multiple of $\|u\|_{\Hscb^{-N,
    r,l}}$ plus a large multiple of $\|u\|_{\Hscb^{-N,-N,l}}$, with the
former being absorbable into the left hand side, this proves the
estimates of Proposition~\ref{prop:imag-control} with $l-\delta$
replaced by $l$ in the last term on the right hand side.
Again, a regularization argument shows that the
estimates hold in the stronger sense that if the right hand side is
finite, so is the left hand side.

Finally, we can use the
normal operator estimate of Lemma~\ref{lemma:simple-normal} (with the
factored out $\sigma$ being irrelevant) as in the
proof of Proposition~\ref{prop:real-sigma-Fredholm} to prove the proposition.
\end{proof}

Again, as soon
as the nullspace is trivial, the usual argument allows the last
relatively compact term to be dropped, so that
$$
\|u\|_{\Hb^{\tilde r,l}}\leq C\|\hat P(\sigma)u\|_{\Hb^{\tilde r,l+1}},
$$
as
well as
$$
\|u\|_{\Hscb^{s,r,l}}\leq C\|\hat P(\sigma)u\|_{\Hscb^{s-2,r+1,l+1}},
$$
and this is uniform for $\sigma$ in compact sets in $\{\im\sigma>0\}$
when $l$ is sufficiently negative. Since taking adjoints changes the sign of $\im\sigma$,
thus if $l>-1/2+\im(\alpha_+(\sigma))$, but $\tilde r$ is arbitrary, we still have analogous
estimates for $\hat P(\sigma)^*$, and thus Fredholm and invertibility
(in the latter case under nullspace assumptions)
statements for $\hat P(\sigma)$.

Allowing $\im\sigma\geq 0$ finally, i.e.\ considering the uniform
behavior to the reals (rather than keeping $\im\sigma$ away from $0$),
only very minor changes are needed to the proof of Proposition~\ref{prop:imag-control}, as we show below.

\begin{proof}[Proof of the general case of Theorem~\ref{thm:main}]
The cutoff
near the zero section
now becomes important, just as for real $\sigma$. We again proceed to compute
with
$$
\chi=\chi_0(\mu^2)\chi_1(\tau^2)\chi_3\Big(\frac{\tau^2}{\tau^2+\mu^2}\Big),
$$
with $\chi_0$, $\chi_1$, $\chi_3$ identically $1$ near $0$ of compact support
sufficiently close to $0$ and with $\chi_1$
having relatively large support so that $\supp\chi_0(.)\cap\supp d\chi_1(.)$
is disjoint from the zero set of $\re \hat p(\sigma)$ as above, so elliptic
scattering estimates control the $d\chi_1$ term, and $\chi_3$ also
chosen so that on the one hand elliptic sc-b estimates control the
$\supp d\chi_3(.)\cap \supp\chi_0(.)\cap\supp d\chi_1(.)$ region and on
the other hand in Lemma~\ref{lemma:commutator-sc-version-imag} the
$(l+1/2)\mu^2$ term dominates the others as discussed after that lemma. On the other hand,
doing the computation in the b-notation,
\begin{equation*}\begin{aligned}
&\Big\{\frac{\re\sigma}{|\sigma|^2}x^2(\taub^2+\mub^2)-2x\taub,\chi_0(x^2\mub^2)\Big\}\\
&=2\Big(2\frac{\re\sigma}{|\sigma|^2}x^2\taub-2x\Big)x^2\mub^2\chi_0'(x^2\mub^2)\\
&=-4x(1-\frac{\re\sigma}{|\sigma|^2}\tau)\mu^2\chi_0'(\mu^2),
\end{aligned}\end{equation*}
so if $\chi_1$ is arranged to have sufficiently small support, say in
$[-(\re\sigma)^2/2, (\re\sigma)^2/2]$, then this is non-negative. Arranging that $-\chi_0'$ is a square, this
simply adds another term of the correct, positive, sign to our symbolic computation if $\im\sigma\leq 0$ and $l+1/2>0$; it adds a term of the 
wrong sign if $\im\sigma\geq 0$ and $l+1/2<0$, but it is controlled by 
propagation estimates from the incoming radial points if $\tilde 
r+l>0$ much as in the real case. The threshold values are shifted as
in the proof of Proposition~\ref{prop:imag-control} due to the
$W(\sigma)$ terms.

The full computation proceeds exactly as above when $\im\sigma$ was
bounded away from $0$; now the terms $-2(\im\sigma)\sum_j\|T_jA_1
u\|^2$ in \eqref{eq:im-sigma-expand} are of no use (unlike before,
when they could have been used to give a stronger result, see Remark~\ref{rem:imag-control}).
The net result is again an estimate
\begin{equation}\label{eq:main-symbolic-est}
\|u\|_{\Hb^{\tilde r-1/2,l}}\leq C(\|\hat P(\sigma)u\|_{\Hb^{\tilde r-1/2,l+1}}+\|u\|_{\Hb^{-N,l}}).
\end{equation}
Now, the last term can be estimated using the normal operator as
above, yielding
$$
\|u\|_{\Hb^{\tilde r-1/2,l}}\leq C(\|\hat P(\sigma)u\|_{\Hb^{\tilde r-1/2,l+1}}+\|u\|_{\Hb^{-N,l-\delta}}).
$$
Again, as soon
as the nullspace is trivial, the usual argument allows the last
relatively compact term to be dropped, so that
$$
\|u\|_{\Hb^{\tilde r-1/2,l}}\leq C\|\hat P(\sigma)u\|_{\Hb^{\tilde r-1/2,l+1}},
$$
and this is uniform for $\sigma$ in compact sets in
$\RR\setminus\{0\}$, times $[0,R]$ along the imaginary direction with
$l$ as above.
The second microlocal version is,
under the same assumptions, with $r=\tilde r+l-1/2$,
$$
\|u\|_{\Hscb^{s,r,l}}\leq C\|\hat P(\sigma)u\|_{\Hb^{s,r+1,l+1}}.
$$
This proves Theorem~\ref{thm:main}.
\end{proof}

\section{High energy/semiclassical results}\label{sec:high}
In this final section we consider high energy scattering, which in
turn can be rescaled to a semiclassical problem. Since the arguments
are very similar to the bounded non-zero $\sigma$ ones, we only sketch
the proofs.

For the high energy estimates we need to be
more specific on the $\sigma$-dependence of $P(\sigma)$. Recall from
\eqref{eq:hat-P-form} that the conjugated operator takes the form
\begin{equation*}
\hat P(\sigma)=\hat P(0)+\sigma\hat Q-2\sigma \Big(x^2D_x+i\frac{n-1}{2}x+\frac{1}{2}x\big(-a_{00}\sigma+a_0+b_0\sigma-\sigma^{-1}a'-b'\big)\Big)
\end{equation*}
with
\begin{equation*}\begin{aligned}
\hat P(0)&=P(0)-xa'\in x^2\Diffb^2(X)+S^{-2-\delta}\Diffb^2(X),\\
\hat Q&=Q-b'x-2xa_{00}(x^2D_x)-2\sum_j xa_{0j}(xD_{y_j})\in x^2\Diffb^1(X)+S^{-2-\delta}\Diffb^1(X),
\end{aligned}\end{equation*}
where we allowed smooth dependence of $b_0,b_j,b'$ on $\sigma$. {\em From
now we assume that $b_0,b_j,b'$ are symbolic in $\sigma$, with
$b_0,b_j$ order $0$ and $b'$ order $1$, while their imaginary part is
order $-1$, resp.\ $0$.} This is the natural order:
when $\hat P(\sigma)$ is the temporal Fourier transform of a wave
operator, we expect these orders, with the imaginary part statement
coming from the formal self-adjointness of wave operators.

The
semiclassical rescaling is arrived at by multiplying $\hat P(\sigma)$
by $h^2$, where $h\in(0,1]$ is understood as comparable to $|\sigma|^{-1}$,
so $h|\sigma|$ is in a compact subset of $\Cx\setminus\{0\}$. The
rescaling gives, with $z=h\sigma$,
\begin{equation}\begin{aligned}\label{eq:hat-P-form-semi}
\hat P_h(z)&=h^2\hat P(h^{-1}z)\\
&=h^2\hat P(0)+z h\hat Q\\
&\qquad-2z
\Big(hx^2D_x+ih\frac{n-1}{2}x+\frac{1}{2}x\big(-a_{00}z+a_0 h+b_0 z-h^2 z^{-1}a'-hb'\big)\Big),
\end{aligned}\end{equation}
and now $h^2\hat P(0)$, $h\hat Q$ are semiclassical differential operators
\begin{equation*}\begin{aligned}
\hat P_h(0)&=h^2\hat P(0)=h^2 P(0)-h^2 xa'\in x^2\Diffbh^2(X)+S^{-2-\delta}\Diffbh^2(X),\\
\hat Q_h&=h\hat Q=hQ-hb'x-2xa_{00}(hx^2D_x)-2\sum_j xa_{0j}(hxD_{y_j})\\
&\qquad\qquad\qquad\qquad\in x^2\Diffbh^1(X)+S^{-2-\delta}\Diffbh^1(X),
\end{aligned}\end{equation*}
and the parenthetical final term in \eqref{eq:hat-P-form-semi} is in
$x\Diffbh^1(X)+S^{-1-\delta}\Diffbh^0(X)$. Note that terms with an
extra $h$ (beyond the $h$ incorporated into the derivatives) are semiclassically
subprincipal, while those with an extra $h^2$ are sub-subprincipal, so
for instance, modulo semiclassically sub-subprincipal terms, $\hat
P_h(0)=h^2 P(0)$, i.e.\ in the high energy limit, the potential term
$a'$ becomes, in this sense, irrelevant (with the analogous conclusion
also holding for the parenthetical final term in
\eqref{eq:hat-P-form-semi}). We remark also that the symbolic
order of $b_0,b_j$ means that the corresponding terms in $\hat Q_h$,
as well as in the final term of \eqref{eq:hat-P-form-semi},
are semiclassically principal, as is $b'$ since it comes with an extra $h^{-1}$ factor,
cancelling out the overall $h$, thus not completely
negligible. However, all these terms are subprincipal in terms of the
scattering decay, i.e.\ they vanish to leading order at $x=0$.

Now, $\hat P_h(z)$, just like the similarly defined $P_h(z)=h^2
P(h^{-1}z)$, is not semiclassically elliptic even in $x>0$. Indeed,
the semiclassical principal symbol of $P_h(z)$ is
$$
p_\semi(z)=G+z(b_0\tauh+\sum_j
b_j(\muh)_j)+zxhb'-z^2,
$$
while that of $\hat P_h(z)$ is then, corresponding to the conjugation
being a symplectomorphism at the phase space level (namely:
translation in the fibers by the differential $-\sigma x^{-2}\,dx$ of the phase, $\sigma/x$), can be arrived at
by replacing $\tauh$ by $\tauh-z$; these are elliptic at
fiber infinity, but vanish for appropriate finite $\tauh,\muh$ for $z$
real, and indeed at the zero section even if $z$ is complex. In particular,
$$
\hat p_\semi(z)|_{x=0}=\tauh^2+\muh^2-2z\tauh.
$$
The semiclassical flow structure in the scattering setting was
discussed in \cite{Vasy-Zworski:Semiclassical}; the symplectomorphism
corresponding to the conjugation simply translates this by $-\sigma
x^{-2}\,dx$. Thus, for $\hat P_h(z)$, there are two radial sets at
$x=0$, one of which is at the zero section; one of these is a source,
the other is a sink, {\em including in the sense of dynamics from the
  interior, $x>0$}.

Following \cite{Vasy-Zworski:Semiclassical}, one has semiclassical
symbolic estimates away from the scattering zero section at $\pa
X$. Thus, when the scattering decay order $r$ is above the threshold
value, i.e.\ one treats the radial set other than the zero section as the incoming one, at the
radial set other than the zero section, one gets automatic
semiclassical estimates there and one can propagate them towards the
outgoing radial set, stopping (since we need to discuss 2-microlocal
estimates) before arriving at the resolved zero section at $\pa
X$. Since we no longer have ellipticity over $X^\circ$, it is
important that for all points away from the b-front face, at $h=0$,
the flow in the backward (if the radial set outside the zero section
is a source) or forward (if the radial set outside the zero section
is a sink) direction tends to the radial set outside the zero
section. This follows from the non-trapping hypothesis in the
scattering setting. At this point it remains to do an estimate at the
zero section, acting as the outgoing radial set. For this we can use
essentially the same commutant as in the non-semiclassical setting for
the potentially non-zero $\im\sigma$ (recall that the real $\sigma$
argument used a more delicate cancellation that is not in general robust),
namely the weights, a cut off in a neighborhood of the blown up zero
section, which now must include a cutoff in the interior as
well. Thus, we take
\begin{equation*}\begin{aligned}
\chi&=\chi_0(\muh^2)\chi_1(\tauh^2)\chi_2(x)\chi_3(\tauh^2/(\tauh^2+\muh^2))\\
&=\chi_0(x^2\mubh^2)\chi_1(x^2\taubh^2)\chi_2(x)\chi_3(\taubh^2/(\taubh^2+\mubh^2)).
\end{aligned}\end{equation*}
The cutoff contributions before came from
$\chi_0(\mu^2)$ in view of the support of the differential of $\chi_1$; now $\chi_0(\muh^2)$ plays an analogous role. Since
we computed Poisson brackets using the b-structure, the contribution
came from the $x$ dependence of $\chi_0(x^2\mubh^2)$, and this $x$
dependence is completely analogous to that of $\chi_2$, with both
cutoffs being identically $1$ near $0$. Correspondingly, they
contribute with the same sign, meaning they both need to be
controlled (i.e.\ they have a sign opposite to that given by the
weight, where now $l$ is to be below the threshold value), as they are from the estimate propagated from the
incoming radial point; hence one obtains the zero section
outgoing radial estimates.

A completely analogous argument works when at
the radial set outside the zero section  one has scattering decay
order $r$ below the threshold value, and correspondingly at the zero
section, one has b-decay above the threshold regularity. In this case, one needs to start
at the zero section (which is the incoming radial set), taking
advantage of the cutoffs mentioned above having the correct sign
(matching that of the weights), and then propagate the estimates
using the non-trapping assumptions and the standard semiclassical
scattering propagation results as in
\cite{Vasy-Zworski:Semiclassical}.

In combination, these prove both semiclassical
high and low b-decay statements, namely the following analogue of
Proposition~\ref{prop:imag-control} (without using the normal operator
argument, thus no gain of decay in the error term, but with a gain in
the semiclassical parameter $h$), with the strengthened (in that
real $\sigma$ is allowed) statement
that arises in the proof of Theorem~\ref{thm:main} given after the
proposition, see \eqref{eq:main-symbolic-est}:

\begin{prop}\label{prop:semi-imag-control}
For $l<-1/2+h\im(\alpha_+(h^{-1}z))$ and $\im z\geq 0$, as well as for
$l>-1/2+h\im(\alpha_+(h^{-1}z))$ and $\im z\leq 0$, with $\tilde r,r$
arbitrary in either case, we have the estimates
$$
\|u\|_{\Hbh^{\tilde r-1/2,l}}\leq Ch^{-1}(\|\hat P_h(z)u\|_{\Hbh^{\tilde r-1/2,l+1}}+h^N\|u\|_{\Hbh^{-N,l}})
$$
and
$$
\|u\|_{\Hscbh^{s,r,l}}\leq Ch^{-1}(\|\hat P_h(z)u\|_{\Hscbh^{s-2,r+1,l+1}}+h^N\|u\|_{\Hbh^{-N,l}}).
$$
\end{prop}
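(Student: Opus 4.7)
The plan is to run the same positive commutator strategy that proved Theorem~\ref{thm:main} (in the non-real $\sigma$ regime), but now in the semiclassical scattering $b$-calculus, using the non-trapping assumption to compensate for the loss of ellipticity over $X^\circ$ at $h=0$. The key structural observation is that $\hat P_h(z)$ differs from $h^2\hat P(h^{-1}z)$ only in how orders are bookkept: its semiclassical principal symbol $\hat p_\semi(z)$ equals the scattering principal symbol of $\hat P(\sigma)$ (with $\sigma=h^{-1}z$) after the translation $\tauh\mapsto\tauh-z$, and in particular $\hat p_\semi(z)|_{x=0}=\tauh^2+\muh^2-2z\tauh$ has exactly the same source/sink radial set structure computed in Section~\ref{sec:operator}, with the zero section behaving as outgoing for $\re z>0$ and incoming for $\re z<0$.

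First I would set up the microlocal input. At fiber infinity $\hat P_h(z)$ is semiclassically elliptic, giving the trivial (elliptic) piece of the estimate. At the radial set away from the zero section, I would invoke the semiclassical scattering radial point estimate (the semiclassical analogue of \eqref{eq:sc-finite-pts-to-0}/\eqref{eq:sc-finite-pts-from-0} as in \cite{Vasy-Zworski:Semiclassical}): in the below-threshold case (so the radial set at the zero section is outgoing) one gets the estimate at the non-zero-section radial set for free, then propagates it forward along the Hamilton flow, using non-trapping to reach every point of the characteristic set away from a small neighborhood of the lift of the zero section; in the above-threshold case the roles of source and sink swap and one propagates backward from the zero section. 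In both cases the remaining work is a second microlocal radial point estimate at the (resolved) zero section at $\pa X$.

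For that remaining zero section estimate I would use the same commutant as in Proposition~\ref{prop:imag-control}: a symbol of the form $a\chi$ with $a=x^{-2l-1}(\taubh^2+\mubh^2)^{\tilde r-1/2}$ and
\[
\chi=\chi_0(\muh^2)\,\chi_1(\tauh^2)\,\chi_2(x)\,\chi_3\!\bigl(\tauh^2/(\tauh^2+\muh^2)\bigr),
\]
with $\chi_2$ a new interior cutoff localizing to a collar of $\pa X$. The principal symbol computation in Lemma~\ref{lemma:commutator-sc-version-imag} and the rearrangement yielding \eqref{eq:commutator-sc-version-imag-full}, together with the decomposition $\im\tilde P(\sigma)=-(\im\sigma)T(\sigma)+W(\sigma)$ of Lemma~\ref{lemma:im-tilde-P-positive}, carry over verbatim to the semiclassical symbols: the $(l+1/2-h\im\alpha_+(h^{-1}z))\muh^2$ term dominates in a neighborhood of the blown-up zero section (for $\chi_3$ chosen of small enough support), the cutoff contributions from $d\chi_0$ and $d\chi_2$ carry the same sign as the weight and are thus handled by the incoming-side propagation estimate (this uses $\tilde r+l>0$ in the below-threshold case, and the matching sign condition in the above-threshold case), while $d\chi_1$ and $d\chi_3$ are supported in the sc-elliptic region. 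The same $b$-Poisson bracket calculation
\[
\bigl\{\tfrac{\re z}{|z|^2}x^2(\taubh^2+\mubh^2)-2x\taubh,\,\chi_0(x^2\mubh^2)\bigr\}=-4x\bigl(1-\tfrac{\re z}{|z|^2}\tau\bigr)\mu^2\chi_0'(\mu^2)
\]
as in the proof at the end of Section~\ref{sec:commutator} shows the $\chi_0$ derivative term has the correct (or controllable) sign on $\supp\chi_1$ for $\chi_1$ of small enough support. Since the $T(\sigma)$ part of $\im\tilde P$ is non-negative, the $-(\im z/h)\|T_jA_1u\|^2$ contributions (if $\im z\geq 0$ and $l<-1/2+h\im\alpha_+$, or the opposite signs) cooperate with the commutator sign and are simply discarded.

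Assembling the positive commutator identity
\[
i\bigl(\hat P_h(z)^*A-A\hat P_h(z)\bigr)+A S\hat P_h(z)+\hat P_h(z)^* SA=\mp hB^*B+hE+h^2F,
\]
where the overall factor of $h$ on the right is the semiclassical principal symbol of the commutator, pairing against $u$, and applying Cauchy--Schwarz exactly as in \eqref{eq:imag-forcing-1a}--\eqref{eq:imag-forcing-1b} (with the $x^{1/2}$ weight distributing evenly onto the two factors), yields
\[
h\|Bu\|^2\leq C\bigl(\|x^{-1/2}A_1\hat P_h(z)u\|^2+h\langle Eu,u\rangle+h^2\langle Fu,u\rangle\bigr),
\]
and after controlling $E$ by the propagation estimate from the non-zero-section radial set, absorbing the $F$ term, dividing by $h^2$, and invoking sc-differential ellipticity for the order $s$, one arrives at the stated $h^{-1}$-loss estimate. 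The main obstacle I expect is the bookkeeping of which $h$-powers appear where: $\hat P_h(z)$ has its $\sigma\hat Q$ and final parenthetical term semiclassically principal (and subprincipal in sc-decay), so one must verify that the $W(\sigma)$-type contributions still show up with coefficient $h\im\alpha_+(h^{-1}z)$, accounting for the threshold shift in the proposition. Once this is checked, no new microlocal ingredient is required beyond \cite{Vasy-Zworski:Semiclassical} and the non-semiclassical second microlocal argument already carried out.
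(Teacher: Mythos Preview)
Your proposal is correct and follows essentially the same approach as the paper's sketch: semiclassical scattering propagation from \cite{Vasy-Zworski:Semiclassical} together with non-trapping away from the zero section, and the same second-microlocal commutant (augmented by the interior cutoff $\chi_2(x)$) at the zero section. One minor slip: in the below-threshold case the $d\chi_0$ and $d\chi_2$ contributions carry the sign \emph{opposite} to the weight term, which is precisely why they must be controlled by the incoming-side propagation estimate you invoke; only in the above-threshold (incoming) case do their signs match the weight.
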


Due to the $h^N$ factors, the last term on the right hand side of both
estimates of Proposition~\ref{prop:semi-imag-control} can be absorbed
into the left hand side. This again gives direct and adjoint
estimates, and one concludes that, for $h\in(0,1)$,
$$
\|u\|_{\Hbh^{\tilde r-1/2,l}}\leq Ch^{-1}\|\hat P_h(z)u\|_{\Hbh^{\tilde r-1/2,l+1}},
$$
which translates to
$$
\|u\|_{\Hbh^{\tilde r-1/2,l}}\leq Ch\|\hat P(h^{-1}z)u\|_{\Hbh^{\tilde r-1/2,l+1}},
$$
thus
$$
\|u\|_{\Hbh^{\tilde r-1/2,l}}\leq C|\sigma|^{-1}\|\hat P(\sigma)u\|_{\Hbh^{\tilde r-1/2,l+1}},
$$
with $\tilde r,l$ as before, provided that the non-trapping assumption holds.

The second microlocal version is completely analogous, as is the
complex spectral parameter version, proving Theorem~\ref{thm:high}.

\def\cprime{$'$} \def\cprime{$'$}

\end{document}